\documentclass[11pt,a4paper]{amsart}
\newcommand{\Title}{Kadison--Kastler Distance and Interior Angle for Masas: Connections with Entropy and Probabilistic Index}%
\newcommand{\ShortTitle}{\Title}%

\newcommand{\AuthorOne}{Indrajit Ghosh}%
\newcommand{\AuthorOneAddr}{%
	Department of Mathematics, Indian Institute of Technology Kanpur, Uttar Pradesh 208 016, India
}%

\newcommand{\AuthorOneEmail}{%
	indrajitghosh912@gmail.com, indrajitg@iitk.ac.in
}%

\newcommand{\AuthorTwo}{Sumit Kumar}%
\newcommand{\AuthorTwoAddr}{Department of Mathematics, Indian Institute of Technology Kanpur, Uttar Pradesh 208016, India
}%
\newcommand{\AuthorTwoEmail}{sumitkumar.sk809@gmail.com}

\newcommand{\SubjectClassText}{Primary 46L05, 47C15, 47L40; Secondary 46L10}
\newcommand{\Dedicatory}{This paper is dedicated to our PhD advisors.}
\newcommand{\Keywords}{Kadison-Kastler Distance, Interior Angle, Connes-Stormer Entropy, Popa's Probabilistic Index, Hadamard Unitary, Flat Unitary}

\newcommand{\pdfTitle}{\Title}
\newcommand{\pdfAuthor}{Indrajit Ghosh}
\newcommand{\pdfSubject}{Mathematics, Research paper}
\newcommand{\pdfKeywords}{\Keywords}
\newcommand{\pdfCreator}{TeXlive}
\newcommand{\pdfCreationDate}{\today}
\newcommand{\pdfColorLink}{true}
\newcommand{\pdfLinkColor}{cyan}
\newcommand{\pdfUrlColor}{blue}
\newcommand{\pdfCiteColor}{magenta}

\usepackage[top=0.9in, bottom=1in, left=0.7in, right=0.7in]{geometry}
\usepackage{amsmath,amssymb,amsthm} 
\usepackage[utf8]{inputenc}
\usepackage[T1]{fontenc}
\usepackage{mathtools}
\usepackage{mathrsfs} 
\usepackage{xfrac} 
\usepackage{dsfont} 
\usepackage{array}
\usepackage{verbatim}
\usepackage{graphicx}
\usepackage{mdframed}
\usepackage{enumitem} 
\usepackage{hyperref}
\hypersetup{
	pdftitle={\pdfTitle},
	pdfauthor={\pdfAuthor},
	pdfsubject={\pdfSubject},
	pdfcreationdate={\pdfCreationDate},
	pdfcreator={\pdfCreator},
	pdfkeywords={\pdfKeywords},
	colorlinks=\pdfColorLink,
	linkcolor={\pdfLinkColor},
	urlcolor=\pdfUrlColor,
	citecolor=\pdfCiteColor,
	pdfpagemode=UseOutlines,
}
\usepackage{tikz-cd} 
\usepackage{lipsum}
\usetikzlibrary{matrix,arrows}
\usepackage[english]{babel}
\usepackage{lmodern}
\usepackage{bbm} 
\usepackage[dvipsnames]{xcolor}
\usepackage[most]{tcolorbox}
\usepackage{xparse}

\theoremstyle{plain}
\newtheorem{theorem}{Theorem}[section]
\newtheorem{prop}[theorem]{Proposition}
\newtheorem{lem}[theorem]{Lemma}
\newtheorem{cor}[theorem]{Corollary}

\theoremstyle{definition}
\newtheorem{definition}[theorem]{Definition}

\theoremstyle{remark}
\newtheorem{remark}[theorem]{Remark}

\numberwithin{equation}{section}

\newtheoremstyle{ser}
{8pt}
{8pt}
{\it}
{}
{\sf}
{:}
{6mm}
{}

\theoremstyle{ser}

\newtheoremstyle{serr}
{8pt}
{8pt}
{\normalfont}
{}
{\sf}
{.}
{6mm}
{}

\theoremstyle{serr}

\theoremstyle{ser}

\theoremstyle{ser}

\newtheoremstyle{collabquestion}
  {8pt}
  {8pt}
  {\normalfont}
  {}
  {\sffamily\bfseries\color{blue!70!black}}
  {.}
  {.5em}
  {}

\theoremstyle{collabquestion}
\newtheorem{qninner}{Question}

\definecolor{indraRed}{rgb}{0.593, 0.183, 0.183}
\definecolor{indraPink}{rgb}{0.858, 0.188, 0.478}
\definecolor{indraBlue}{rgb}{0, 0.199, 0.398}
\definecolor{madridBlue}{rgb}{0.199, 0.199, 0.695}
\definecolor{metropolisThemeColor}{rgb}{0.105, 0.214, 0.234}
\definecolor{metropolisBarColor}{rgb}{0.984, 0.0.515, 0.015}
\definecolor{UBCblue}{rgb}{0.04706, 0.13725, 0.26667} 
\definecolor{UBCgrey}{rgb}{0.3686, 0.5255, 0.6235} 

\makeatletter
\def\mathcolor#1#{\@mathcolor{#1}}
\def\@mathcolor#1#2#3{%
	\protect\leavevmode
	\begingroup
	\color#1{#2}#3%
	\endgroup
}
\makeatother

\makeatletter
\def\ps@headings{\ps@empty
  \def\@evenhead{\normalfont\scriptsize\hfil \leftmark{}{}\hfil}%
  \def\@oddhead{\normalfont\scriptsize\hfil \rightmark{}{}\hfil}%
  \let\@mkboth\markboth
  \def\@evenfoot{\normalfont\scriptsize\hfil\thepage\hfil}%
  \def\@oddfoot{\normalfont\scriptsize\hfil\thepage\hfil}%
}
\makeatother

\makeatletter
\def\author@andify{%
  \nxandlist {\unskip ,\penalty-1 \space\ignorespaces}%
    {\unskip {} \@@and~}%
    {\unskip \penalty-2 \space \@@and~}%
}
\makeatother

\definecolor{indraRed}{rgb}{0.593, 0.183, 0.183}
\definecolor{indraPink}{rgb}{0.858, 0.188, 0.478}
\definecolor{indraBlue}{rgb}{0, 0.199, 0.398}
\definecolor{madridBlue}{rgb}{0.199, 0.199, 0.695}
\definecolor{metropolisThemeColor}{rgb}{0.105, 0.214, 0.234}
\definecolor{metropolisBarColor}{rgb}{0.984, 0.0.515, 0.015}
\definecolor{UBCblue}{rgb}{0.04706, 0.13725, 0.26667} 
\definecolor{UBCgrey}{rgb}{0.3686, 0.5255, 0.6235} 

\newcommand{\spn}{{\operatorname{span}\,}}

\newcommand{\C}{\mathbb{C}}
\newcommand{\R}{\mathbb{R}}
\newcommand{\N}{\mathbb{N}}

\newcommand{\Bh}{\mathcal{B}(\mathcal{H})} 
\NewDocumentCommand{\mn}{m O{\mathbb{C}}}
{\mathbb{M}_{#1}(#2)} 

     \newcommand{\sA}{\mathcal A}		
     \newcommand{\sB}{\mathcal B}		
     \newcommand{\sC}{\mathcal C}		
     \newcommand{\sD}{\mathcal D}

     \newcommand{\sG}{\mathcal G}		
     \newcommand{\sH}{\mathcal H}		
     		
\newcommand{\fK}{\mathfrak K}     		
     		
     \newcommand{\sL}{\mathcal L}		
     \newcommand{\sM}{\mathcal M}		
     \newcommand{\sN}{\mathcal N}		
     		
     \newcommand{\sP}{\mathcal P}		
     \newcommand{\sQ}{\mathcal Q}		
     		
     \newcommand{\sS}{\mathcal S}		
     		
     \newcommand{\sU}{\mathcal U}

     \newcommand{\sX}{\mathcal X}		
     \newcommand{\sY}{\mathcal Y}		
     \newcommand{\sZ}{\mathcal Z}

\NewDocumentCommand{\nor}{g}
  {\sN\IfValueT{#1}{_{#1}}}

\NewDocumentCommand{\unor}{g}
  {\sU\sN\IfValueT{#1}{_{#1}}}

\NewDocumentCommand{\grnor}{g}
  {\sG\sN\IfValueT{#1}{_{#1}}}

\newcommand{\dkk}{\mathrm{d}_{\mathrm{KK}}}
\newcommand{\ball}[1]{\left( #1\right)_1}

\begin{document}
	
        \title[\ShortTitle]{\MakeUppercase\Title}
	\author{\AuthorOne}
	\address[Indrajit Ghosh]{\AuthorOneAddr}
	
	\email{\AuthorOneEmail}

    \author{\AuthorTwo}
	\address[Sumit Kumar]{\AuthorTwoAddr}
	\email{\AuthorTwoEmail}

	\date{}
    \dedicatory{\Dedicatory}
	\subjclass{\SubjectClassText}
	\keywords{\Keywords}
	
	
	\begin{abstract}
		We study the Kadison--Kastler distance and the interior angle between masas in finite-dimensional matrix algebras. In \(\mathbb{M}_2(\mathbb{C})\), we obtain an explicit formula for the Kadison--Kastler distance between two masas and show that every value in \([0,1]\) is attained by \(\mathrm{d}_{\textrm{KK}}(\Delta,u\Delta u^*)\). We further prove that the distance attains its maximal value precisely when the relative unitary is a Hadamard unitary. In addition, for every $v$ in the groupoid normaliser of $\Delta$, the Kadison--Kastler distance between \(\Delta\) and \(v\Delta v^*\) assumes either its minimal or maximal possible value. We also establish the identity
\[
\mathrm{d}_{\textrm{KK}}(\mathcal{A},\mathcal{B})=\sin\alpha(\mathcal{A},\mathcal{B})
\]
for any two intermediate subalgebras \(\mathbb{C}\subsetneq\mathcal{A},\mathcal{B}\subsetneq\mathbb{M}_2(\mathbb{C})\).
Most notably, for any unitaries \(u,v\in\mathbb{M}_2(\mathbb{C})\), we establish the equivalence of maximal Kadison--Kastler distance, maximal interior angle, maximal Connes--St\o rmer modified entropy, and minimal Popa probabilistic index:
\[
\mathrm{d}_{\textrm{KK}}(u\Delta u^*,v\Delta v^*)=1
\iff
h(u\Delta u^*,v\Delta v^*)=\log 2
\iff
\alpha(u\Delta u^*,v\Delta v^*)=\frac{\pi}{2}
\iff
\lambda(u\Delta u^*,v\Delta v^*)=\frac{1}{2}.
\]
For general \(\mathbb{M}_n(\mathbb{C})\), we prove a unitary invariance property for the interior angle and derive an explicit formula for the angle between arbitrary masas of the form \(u\Delta^{(n)}u^*\) and \(v\Delta^{(n)}v^*\). As a consequence, the interior angle is \(\frac{\pi}{2}\) precisely when the relative unitary \(u^*v\) is a Hadamard unitary. Finally, we show that every value in \(\left[0,\frac{\pi}{2}\right]\) is attained as the interior angle between a pair of masas in \(\mathbb{M}_n(\mathbb{C})\).   Interestingly, for \(n\geq 3\), the interior angle need not be an invariant of the underlying \(C^*\)-algebras: it can vanish even for distinct masas. This contrasts with the situation for \(\mathrm{II}_1\)-subfactors and for \(\mathbb{M}_2(\mathbb{C})\), where vanishing of the interior angle implies equality of the corresponding subalgebras.
	\end{abstract}

	\maketitle%
        \thispagestyle{empty}%


    \section{Introduction}
    
\noindent Maximal abelian self-adjoint subalgebras (masas) occupy a central position in the study of operator algebras and provide an important setting in which algebraic, geometric, and metric properties can be investigated explicitly. Given two masas in a $C^*$-algebra, it is natural to ask how their relative position can be quantified and to what extent different invariants capture the same underlying structure.
To address this, the Kadison–Kastler distance (\cite{Kadison_Kastler})  provides a natural metric for measuring the proximity of two $C^*$-subalgebras.
More precisely, if $\sA$ and $\sB$ are unital $C^*$-subalgebras of a $C^*$-algebra $\mathcal M$, their Kadison--Kastler distance $\dkk(\sA,\sB)$ measures how closely the elements of one algebra can be approximated by elements of the other.
The study of this distance has played an important role in understanding the stability of operator-algebraic structures under small perturbations (see, for instance, \cite{Kadison_Kastler, Ch_1974, Ch_1977, Ch_1980, Ch_et_al_2010}). 

Another natural way to study the relative position of two masas is through the notion of the interior angle between them (\cite{Bakshi_gupta_2021}). While the Kadison–Kastler distance is a metric notion of proximity, the interior angle provides a geometric perspective on how two masas are situated relative to one another.

In the finite-dimensional setting, and in particular for masas in $\mn{n}$, this question becomes especially concrete and permits an explicit analysis of the relative position of two masas. Such an analysis can reveal connections between metric notions of proximity and geometric or structural invariants associated with the pair. 
Our aim in this paper is to investigate these connections for masas in matrix algebras, with particular emphasis on the Kadison--Kastler distance and the interior angle between masas.

These two viewpoints turn out to be closely related in the simplest nontrivial finite-dimensional setting. Indeed, in $\mn{2}$, we establish the following:
\vspace{0.3cm}

\noindent \textbf{Theorem A:} (Corollary \ref{cor:dkk-is-sin-of-angle})  For any two intermediate subalgebras $\C\otimes \mathbb{I}_2 \subsetneq\sA, \sB\subsetneq \mn{2}$, we have
    \[
    \dkk(\sA, \sB) = \sin \alpha(\sA, \sB).
    \]

Thus, in dimension two, the Kadison--Kastler distance and the interior angle despite being completely different in nature encode the exactly same relative-position information. This relationship provides a natural bridge between the metric and geometric viewpoints and forms a central theme of our study.

We first undertake a detailed analysis of the Kadison--Kastler distance between masas in $\mn{2}$. For arbitrary unitaries $u,v\in \mn{2}$, we obtain an explicit formula for $\dkk(u\Delta u^*,v\Delta v^*)$, in particular we prove the following:
\vspace{0.3cm}

\noindent \textbf{Theorem B:} (Corollary \ref{cor:d_KK_between_masa})
For any  two $2\times 2$ unitary matrices $u, v$, we have
\[
\dkk(u\Delta u^*, v\Delta v^*)
=
2\left|\bar{u}_{11}v_{11}+\bar{u}_{21}v_{21}\right|
\left|\bar{u}_{11}v_{12}+\bar{u}_{21}v_{22}\right|.
\]

From this theorem, it immediately follows that the range of this distance is precisely the interval $[0,1]$ (see Corollary \ref{cor:d_KK_range}). We further show that the maximal value $1$ is attained exactly when the relative unitary $u^*v$ is a Hadamard unitary (see Theorem \ref{thm:hadamard-uni}). In addition, the relation between the Kadison--Kastler distance and the interior angle established above gives a geometric interpretation of this extremal case. In fact, we have characterize those $2\times 2$ unitary matrices for which this distance is exactly $r$:
\vspace{0.3cm}

\noindent \textbf{Theorem C:} (Theorem \ref{thm:dkk-r-char})
    Let $0\le r \le 1$ and let $u$ be a $2\times 2$ unitary matrix. Then
    \[
    \dkk (\Delta, u\Delta u^*) = r
    \iff
    |u_{ij}| = \frac{1}{\sqrt{1+r}\pm \sqrt{1-r}}
    \text{ for all } i, j.
    \]

This yields a decomposition of the $2\times 2$ unitary group $\sU(\mn{2})$ according to the Kadison--Kastler distance from the diagonal masa $\Delta$ (see Remark \ref{rem:u-decom-dkk}):
\[
\sU(\mn{2}) = \bigsqcup_{0\le r\le 1}
\left\{ u \in \sU(\mn{2}) : \dkk (\Delta, u\Delta u^*) = r \right\}.
\]

 We also obtain a rigidity result for the groupoid normaliser of $\Delta$: for $v\in \mn{2}$ belonging to the groupoid normaliser of $\Delta$, the distance between $\Delta$ and $v\Delta v^*$ can take only its minimal or maximal value (see Corollary \ref{cor:groupoid_d_KK}). These results provide a complete description of the metric behavior of masas in $\mn{2}$ and identify Hadamard unitaries as the configurations corresponding to maximal separation.

The preceding analysis also reveals a remarkable connection with other invariants associated with pairs of masas. In particular, we investigate the relationship between the Kadison--Kastler distance, the interior angle, the Connes--St\o rmer modified entropy (\cite{Choda2008}), and the Popa's probabilistic index (\cite{Pimsner_Popa_1986}). Although these quantities arise from different perspectives, in $\mn{2}$ they detect exactly the same extremal configuration. More precisely, we prove the following:
\vspace{0.3cm}

\noindent \textbf{Theorem D:} (Theorem \ref{thm:equivalence})
For any  two $2\times 2$ unitary matrices $u, v$. The following are equivalent:
\begin{itemize}
    
     \item[(i)] $\dkk\left(u\Delta u^*,v\Delta v^*\right)=1$.
    
    \item[(ii)] $\alpha\left(u\Delta u^*,v\Delta v^*\right)=\frac{\pi}{2}$.

    \item[(iii)] $h(u\Delta u^* \mid v\Delta v^*)=\log(2)$.
    
    \item[(iv)] $\lambda(u\Delta u^*,v\Delta v^*)=\frac{1}{2}$.

    \item[(v)] $u^*v$ is a Hadamard unitary.
\end{itemize}

Thus, in $\mn{2}$, four apparently different ways of measuring the relative position of two masas -- metric, geometric, entropic, and probabilistic -- are equivalent at the extremal level. This provides a direct link between the geometry of masas and several fundamental invariants arising in operator-algebraic and subfactor theory.

The preceding results reveal a particularly rigid picture for masas in $\mn{2}$, in which the Kadison--Kastler distance, interior angle, Connes--St\o rmer modified entropy, and Popa's probabilistic index are closely intertwined. We next investigate which aspects of this picture persist in higher dimensions. While the precise identity between the Kadison--Kastler distance and the interior angle established above is specific to the two-dimensional setting, the interior angle itself admits a natural and explicit description for masas in arbitrary matrix algebras. To this end, let $\Delta^{(n)}$ denote the diagonal masa in $\mn{n}$. We study the interior angle between masas of the form $u\Delta^{(n)}u^*$ and $v\Delta^{(n)}v^*$ for arbitrary unitaries $u,v\in\mn{n}$, thereby extending the geometric aspect of our analysis to general finite dimensions.

For arbitrary $n$, we first establish a unitary invariance property of the interior angle:
\[
\alpha\left(u\Delta^{(n)}u^*,v\Delta^{(n)}v^*\right)
=
\alpha\left(\Delta^{(n)},u^*v\Delta^{(n)}v^*u\right).
\]
for all unitaries $u,v\in\mn{n}$ (see Theorem \ref{thm:uni-inv-angle}). This reduces the computation of the angle between arbitrary masas to that between the diagonal masa and a unitary conjugate of it. Using this invariance, we derive an explicit formula for the interior angle in terms of the entries of the relative unitary $u^*v$. In particuar, we prove the following:
\vspace{0.3cm}

\noindent \textbf{Theorem E:} (Theorem \ref{thm:angle-betwn-masas})
 Let $u,v\in\mn{n}$ be $n \times n$ unitary matrices. Then
\[
\cos\alpha\left(u\Delta^{(n)}u^*,v\Delta^{(n)}v^*\right)
=
\frac{n}{n-1}
\left\|
\left[
|u^*v|^{\circ 2}\circ(u^*v)
-\frac{1}{n}u^*v
\right]
\right\|_{\mathrm{op}}.
\]
where $\circ$ denotes the componentwise Hadamard product. As an immediate consequence, we obtain a characterization of the maximal angle:
\vspace{0.3cm}

\noindent \textbf{Theorem F:} (Theorem \ref{thm:angle-hadamard-comm-sq})
Let $u,v\in\mn{n}$ be $n \times n$ unitary matrices. Then
$$
\alpha(u\Delta^{(n)}u^*,v\Delta^{(n)}v^*)=\frac{\pi}{2}
\quad\Longleftrightarrow\quad
u^*v\ \text{is a Hadamard unitary}.
$$
Thus, the same Hadamard condition that characterizes maximal separation in the $\mn{2}$ case continues to govern the extremal interior angle in arbitrary dimension. Furthermore, rather unexpectedly, in Remark~\ref{rem:angle-not-good}, we observed that the angle $\alpha(\Delta^{(n)},u\Delta^{(n)}u^*)$ can vanish even when
$\Delta^{(n)}\neq u\Delta^{(n)}u^*$ for $n \ge 3$. Thus, in general, the interior angle is not an invariant of the underlying $C^*$-algebras. This is in contrast to the situation for $II_1$-subfactors (\cite[Proposition 2.3]{bakshi-etal-2019}) and for $\mn{2}$ (\cite[Corollary 4.5-(2)]{gupta2024}), where the vanishing of the angle implies equality of the corresponding subalgebras. Finally, we show that the interior angle between masas in $\mn{n}$ attains every value in the interval $[0,\pi/2]$ (see Theorem \ref{thm:range_angle}).

We also expect that the geometric viewpoint developed in this paper may have applications in finite-dimensional quantum information theory. The appearance of Hadamard unitaries as precisely the unitaries realizing the maximal Kadison--Kastler distance and the maximal interior angle between masas suggests a connection with the theory of mutually unbiased bases and complementary observables, where Hadamard transformations play a fundamental role. From this perspective, the Kadison--Kastler distance and the interior angle may provide quantitative measures of the relative position or incompatibility of classical observables represented by masas. We hope that the relationships established here, particularly the characterization of extremal configurations in terms of Hadamard unitaries, can be developed further to investigate connections between operator-algebraic notions of perturbation and geometric separation and quantitative notions of distinguishability, incompatibility, and complementarity in quantum information theory.

The paper is organized as follows. In \S~\ref{sec:prelims}, we recall the relevant definitions and preliminary results concerning the Kadison--Kastler distance, interior angles, Connes--St\o rmer modified entropy, and Popa's probabilistic index. In \S~\ref{sec:m2}, we study masas in $\mn{2}$ and obtain an explicit formula for their Kadison--Kastler distance, showing in particular that its range is $[0,1]$ and characterizing the cases of maximal distance. In \S~\ref{sec:relationship}, we establish the relationship between the Kadison--Kastler distance and the interior angle and investigate the connections with the Connes--St\o rmer modified entropy and Popa's probabilistic index. In \S~\ref{sec:mn}, we turn to masas in $\mn{n}$ and study their interior angle. We establish unitary invariance, derive an explicit formula for the angle, characterize the maximal angle in terms of Hadamard unitaries, and prove that every value in $[0,\pi/2]$ is attained.

    
    \section{Preliminaries}
    \label{sec:prelims}
    The purpose of this section is to introduce the notation and conventions that will be used throughout the paper. We write $\mn{n}$ for the algebra of $n\times n$ complex matrices, and denote by $\sU(\mn{n})$ the unitary group of $\mn{n}$. The diagonal masa in $\mn{n}$ is denoted by
\[
\Delta^{(n)}:= \left\{
\begin{pmatrix}
\lambda_1 & 0 & \cdots & 0 \\
0 & \lambda_2 & \cdots & 0 \\
\vdots & \vdots & \ddots & \vdots \\
0 & 0 & \cdots & \lambda_n
\end{pmatrix}
: \lambda_1,\ldots,\lambda_n \in \mathbb{C}
\right\}.
\]
In the $2\times 2$ case, we shall simply write $\Delta$ for $\Delta^{(2)}$ throughout.

For an angle $\theta$, we will use the following notation for the rotation matrix through angle $\theta$:
\[
r_\theta :=
\begin{pmatrix}
\cos\theta & -\sin\theta\\
\sin\theta & \cos\theta
\end{pmatrix}
\in \sU(\mn{2}).
\]

For matrices $A \in \mn{m \times n}$ and $B \in \mn{l \times k}$, we define their Kronecker product $A\otimes B \in \mn{ml \times nk}$ by
\[
A\otimes B:= [B_{ij}A]_{1\le i\le l,\,1\le j\le k}.
\]
Notice that, for any $A \in \mn{n}$ and $p \in \N$, we have
\[
A\otimes \mathbb{I}_p
=
\mathrm{diag}(\underbrace{A, \dots, A}_{p\text{ copies}})\in \mn{pn}.
\]
Note that for two $n\times n$ matrices $A, B\in \mn{n}$, their Kronecker product $A\otimes B$ belongs to $\mn{n^2}$. Moreover, the set of such Kronecker products spans the entire algebra $\mn{n^2}$. Thus, throughout this paper, we use the following definition of the tensor product algebra:
\[
\mn{n} \otimes \mn{n}:= \spn \left\{ A\otimes B: A, B \in \mn{n} \right\}=\mn{n^2}.
\]

Let $X \in \mn{n}$ with column partitioning $X = \begin{pmatrix} x_1 & x_2 & \dots & x_n \end{pmatrix}$, where each $x_j \in \mathbb{C}^n$ is the $j$-th column vector. We define the following linear isomorphism $\operatorname{vec}: \mn{n} \to \C^{n^2}$ by:
$$
\operatorname{vec}(X) := \begin{pmatrix} x_1 \\ x_2 \\ \vdots \\ x_n \end{pmatrix} = \begin{pmatrix} X_{11} \\ X_{21} \\ \vdots \\ X_{n1} \\ X_{12} \\ \vdots \\ X_{n2} \\ \vdots \\ X_{nn} \end{pmatrix} \in \mathbb{C}^{n^2}
$$

We use $\{E_{ij}\}_{i,j=1}^n$ to denote the standard matrix units in $\mn{n}$, while $\{e_1,\dots,e_n\}$ denotes the standard orthonormal basis (ONB) of the Hilbert space $\C^n$. Note that
\[
\operatorname{vec}(E_{ij}) = e_i \otimes e_j.
\]

Finally, throughout this paper, we use the notation $\sB \subseteq^E \sA$ to indicate that $E:\sA\to\sB$ is a conditional expectation from the $C^*$-algebra $\sA$ onto $\sB$.

\subsection{Kadison--Kastler Distance}

The perturbation theory of operator algebras on Hilbert spaces was initiated by R.~V.~Kadison and D.~Kastler in 1972 \cite{Kadison_Kastler}. In their work, they introduced a notion of distance between operator algebras acting on a Hilbert space $\sH$. This notion, however, is more generally defined for subspaces of an arbitrary normed space. We begin by recalling the definition of the Kadison--Kastler distance for subspaces of normed spaces and then record some standard facts that will be used in the setting of subalgebras of $\Bh$.

For any normed space $\sX$, we denote its closed unit ball by $\ball{\sX}$. For a subset $\sS$ of $\sX$ and an element $x\in\sX$, the distance from $x$ to $\sS$ is defined by
\[
d(x,\sS):=\inf\{\|x-s\|:s\in\sS\}.
\]

For any two subspaces $\sY$ and $\sZ$ of a normed space $\sX$, the Kadison--Kastler distance between them, denoted by $\dkk(\sY,\sZ)$, is defined as the Hausdorff distance between their closed unit balls.

\begin{definition}[{\cite[Definition A]{Kadison_Kastler}}]
    \[
    \dkk(\sY,\sZ)
    :=\max\left\{
    \sup_{y\in\ball{\sY}}d(y,\ball{\sZ}),
    \sup_{z\in\ball{\sZ}}d(z,\ball{\sY})
    \right\}.
    \]
\end{definition}

\begin{remark}\label{KK-facts}
  Let $\sX$ be a normed space. The following facts about the Kadison--Kastler distance are well known:
  \begin{enumerate}
    \item[(i)] $0\le\dkk(\sY,\sZ)\le1$ for all subspaces $\sY,\sZ$ of $\sX$.
    \item[(ii)] $\dkk$ is a metric on the collection of all subspaces of $\sX$ (see \cite[\S~I.4]{viro-topo}).
  \end{enumerate}
\end{remark}

\noindent For further details, see \cite{Kadison_Kastler} or \cite{Ch_et_al_2010}.

\subsection{Watatani's Reduced $C^*$-Basic Construction}\label{index-basic-construction}

Let $\sB \subseteq \sA$ be an inclusion of unital $C^*$-algebras with a common identity, and let $E:\sA\to\sB$ be a faithful conditional expectation. Then $\sA$ becomes a right pre-Hilbert $C^*$-module over $\sB$ with respect to the canonical $\sB$-valued inner product
\[
\langle x,y\rangle_{\sB}:=E(x^*y), \qquad x,y\in\sA.
\]
We denote by $\mathcal{E}$ the Hilbert $C^*$-module obtained by completing $\sA$ with respect to the norm induced by this inner product.

To distinguish elements of the $C^*$-algebra $\sA$ from elements of the pre-Hilbert $C^*$-module $\sA$, following \cite{watatani1990}, we consider the canonical inclusion map
\[
\eta:\sA\to\mathcal{E}.
\]
Thus,
\begin{equation}\label{eta-norm}
    \|\eta(x)\|_\sB:=\|E(x^*x)\|^{1/2}\leq\|x\|
\end{equation}
for all $x\in\sA$.

Let $\mathcal{L}_{\sB}(\mathcal{E})$ denote the unital $C^*$-algebra of adjointable operators on $\mathcal{E}$. Every element of $\mathcal{L}_{\sB}(\mathcal{E})$ is, in particular, a $\sB$-module map on $\mathcal{E}$. There is a natural $C^*$-embedding
\[
\lambda:\sA\to\mathcal{L}_{\sB}(\mathcal{E})
\]
given by
\[
\lambda(a)\eta(x)=\eta(ax), \qquad a,x\in\sA.
\]
We henceforth identify $\sA$ with its image $\lambda(\sA)$ in $\mathcal{L}_{\sB}(\mathcal{E})$.

There is also a natural projection
\[
e_1\in\lambda(\sB)'\cap\mathcal{L}_{\sB}(\mathcal{E})
\]
which, under the above identification, satisfies
\[
e_1(\eta(x))=\eta(E(x))
\quad\text{and}\quad
e_1xe_1=E(x)e_1
\]
for all $x\in\sA$. The projection $e_1$ is called the Jones projection associated with the conditional expectation $E$ and is also commonly denoted by $e_\sB$.

Consequently,
\[
\overline{\operatorname{span}}\{xe_1y:x,y\in\sA\}
\]
is a $C^*$-subalgebra of $\mathcal{L}_{\sB}(\mathcal{E})$. This $C^*$-algebra is called Watatani's reduced $C^*$-basic construction associated with the inclusion $\sB\subseteq\sA$ and the conditional expectation $E$, and is denoted by $\sA_1$. Thus,
\[
\sA_1:=\overline{\operatorname{span}}\{xe_1y:x,y\in\sA\}
\subseteq\mathcal{L}_{\sB}(\mathcal{E}).
\]
For further details on Watatani's basic construction, we refer the reader to \cite{watatani1990}.

\subsection{Interior Angle Between Subalgebras}

Let $\sB\subseteq\sA$ be an inclusion of unital $C^*$-algebras. A conditional expectation $E:\sA\to\sB$ is said to have finite Watatani index if there exists a finite set $\{\lambda_1,\dots,\lambda_n\}\subseteq\sA$ such that
\[
x=\sum_{i=1}^n \lambda_i E(\lambda_i^*x)
\qquad\text{for all }x\in\sA.
\]
Such a set $\{\lambda_i\}_i$ is called a \emph{quasi-basis} for $E$ \cite{watatani1990}.

Let $\operatorname{IMS}(\sB,\sA,E)$ denote the collection of all intermediate $C^*$-subalgebras $\sC$ satisfying $\sB\subseteq\sC\subseteq\sA$ for which there exists a conditional expectation $F:\sA\to\sC$ such that
\[
E_{\restriction_\sC}\circ F=E.
\]
For example, for any unitary $u \in \mn{n}$, we have
\[
u\Delta^{(n)}u^* \in \mathrm{IMS}\left(
\C \otimes \mathbb{I}_n,\, \mn{n},\, \frac{1}{n}\operatorname{Tr}(-) \otimes \mathbb{I}_n
\right).
\]

For $\sC,\sD\in\operatorname{IMS}(\sB,\sA,E)$, we denote by $e_\sC$ and $e_\sD$ the corresponding Jones projections in $\sC_1$ and $\sD_1$, respectively. We will use the following result.

\begin{prop}[{\cite[Proposition 2.7, Remark 2.3]{gupta2024}}]\label{prop:gupta-jones-proj}
    Let $\sB\subseteq\sA$ be an inclusion of unital $C^*$-algebras, let $E:\sA\to\sB$ be a finite-index conditional expectation, and let $\sC\in\operatorname{IMS}(\sB,\sA,E)$. Then:
    \begin{itemize}
        \item[(i)] $\sL_\sC(\sA)\subseteq\sL_\sB(\sA)$.
        \item[(ii)] $\sC_1\subseteq\sA_1$.
        \item[(iii)] $e_\sC e_\sB=e_\sB=e_\sB e_\sC$.
        \item[(iv)] The two norms $\| \cdot \|_{\sB}$ and $\|\cdot \|$ on $\sA$ are equivalent. In particular, $\sA$ itself is a Hilbert $\sB$-module.
        \item[(v)] $\sA_1$ is unital and  $\sA_{1}=C^*(\sA, e_\sB)=\sL_\sB(\sA)$.
        \item[(vi)] there exists a finite index conditional expectation $\Tilde{E}:\sA_1 \to \sA$ (called \emph{dual conditional expectation}) with a quasibasis $\{ \lambda_i e_\sB\operatorname{Ind}(E)^{1/2}\}$ that satisfies the equation 
        \[
        \Tilde{E}(x e_\sB y) = \operatorname{Ind}(E)^{-1} xy
        \]
        for $x, y\in \sA$.
    \end{itemize}
\end{prop}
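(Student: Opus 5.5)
The plan is to work throughout with Watatani's description of the module $\sA$ over $\sB$. By item (iv), the norm $\|\cdot\|_\sB$ is equivalent to the $C^*$-norm, so the pre-Hilbert module $\sA$ is already complete and $\mathcal{E}=\sA$. Every operator in $\mathcal{L}_\sB(\mathcal{E})$ then acts on $\sA$ itself, and the items can be handled in the order (iv), (i), (iii), (v), (ii), (vi).

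\textbf{Item (iv).} Let $\{\lambda_i\}_{i=1}^n$ be a quasi-basis for $E$. From $x=\sum_i\lambda_iE(\lambda_i^*x)$, the triangle inequality and the module Cauchy--Schwarz inequality give
\[
\|x\|\le \sum_i\|\lambda_i\|\,\|E(\lambda_i^*x)\|\le \Big(\sum_i\|\lambda_i\|^2\Big)\|\eta(x)\|_\sB .
\]
Together with \eqref{eta-norm}, this shows the two norms are equivalent.

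\textbf{Item (i).} Put $F$ for the expectation onto $\sC$ with $E|_\sC\circ F=E$. Then
\[
\langle x,y\rangle_\sB=E(x^*y)=E\big(F(x^*y)\big)=E\big(\langle x,y\rangle_\sC\big).
\]
An operator $T$ that is adjointable for $\langle\cdot,\cdot\rangle_\sC$ is $\sC$-linear, hence $\sB$-linear. Applying $E$ to $\langle Tx,y\rangle_\sC=\langle x,T^*y\rangle_\sC$ shows that the same $T^*$ serves as a $\sB$-adjoint. Boundedness in the $\sB$-norm follows because $F$ also has finite index, by the standard fact that $\sC\in\operatorname{IMS}$ forces $F$ to be finite-index. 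Applying (iv) to both $F$ and $E$, all three norms are then equivalent.

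\textbf{Item (iii).} Here $e_\sC=F$ and $e_\sB=E$, viewed as operators on $\sA$. The identities $e_\sC e_\sB=e_\sB=e_\sB e_\sC$ reduce to $F\circ E=E$, which holds because $E(x)\in\sB\subseteq\sC$, and to $E\circ F=E$, which holds by hypothesis.

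\textbf{Item (v).} For $T\in\sL_\sB(\sA)$, the quasi-basis gives
\[
T=\sum_{i,j}T\lambda_i e_\sB\lambda_i^*,
\]
using $\sum_i\lambda_ie_\sB\lambda_i^*=1$. Since $T\lambda_i e_\sB = (T\lambda_i)e_\sB$ as rank-one operators $\theta_{T\lambda_i,\lambda_i}$, each summand is of the form $ae_\sB b$ with $a,b\in\sA$. Hence $\sL_\sB(\sA)=\operatorname{span}\sA e_\sB\sA$. This space is already closed, it contains $1$, and it equals $C^*(\sA,e_\sB)$.

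\textbf{Item (ii).} Apply (v) to $\sC$ to get $\sC_1=\sL_\sC(\sA)$, then conclude using (i).

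\textbf{Item (vi).} Define $\Tilde E(xe_\sB y)=\operatorname{Ind}(E)^{-1}xy$ on $\operatorname{span}\sA e_\sB\sA$, where $\operatorname{Ind}(E)=\sum_i\lambda_i\lambda_i^*$ is central. The main obstacle is checking that this is well defined. The plan is to show that if $\sum_k x_ke_\sB y_k=0$, then $\sum_k x_ky_k=0$. To do so, evaluate the operator at $\eta(\lambda_i^*)$, multiply by $\lambda_i$, and sum over $i$, using $\sum_i E(y_k\lambda_i)\lambda_i^*=y_k$; this relation comes from taking adjoints in the quasi-basis relation applied on the right. After that, positivity, the bimodule property and the fact that $\Tilde E|_\sA=\mathrm{id}$ are direct: for the last, write $a=\sum_i a\lambda_ie_\sB\lambda_i^*$, which maps to $\operatorname{Ind}(E)^{-1}a\operatorname{Ind}(E)=a$. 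Finally, the quasi-basis property of $\{\lambda_ie_\sB\operatorname{Ind}(E)^{1/2}\}$ is checked on the spanning elements $xe_\sB y$, using $e_\sB\lambda_i^* x e_\sB=E(\lambda_i^*x)e_\sB$ and the quasi-basis relation.
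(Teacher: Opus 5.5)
The paper gives no argument of its own for this proposition. It imports all six items from \cite[Proposition 2.7, Remark 2.3]{gupta2024}, which assembles them from Watatani's memoir together with the fact that $F$ inherits finite index from $E$. You instead reconstruct everything directly from a quasi-basis:
\begin{itemize}
\item (iv) via module Cauchy--Schwarz;
\item (v) via $1=\sum_i\lambda_ie_\sB\lambda_i^*$;
\item (i) and (iii) by identifying $e_\sB$ and $e_\sC$ with $E$ and $F$ acting on $\sA$;
\item (vi) by explicit formulas.
\end{itemize}
This is correct and self-contained. The citation route is shorter but hides which hypothesis each item actually uses.

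Your route also shows that the appeal to finite index of $F$ can be dropped. Since $\|x\|_\sB=\|E(F(x^*x))\|^{1/2}\le\|F(x^*x)\|^{1/2}\le\|x\|$, item (iv) already makes all three norms equivalent, so $\sA$ is a Hilbert $\sC$-module. For (ii) you only need the tautological inclusion $\sC_1\subseteq\sL_\sC(\sA)$ followed by (i), not (v) applied to $\sC$.

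There are a few slips to fix.
\begin{itemize}
\item In (v), the sum should run over $i$ only. The identity you mean is $T\circ L_a\circ e_\sB=L_{T(a)}\circ e_\sB$, where $L_a$ is left multiplication by $a$. It uses $\sB$-linearity of $T$ and $\mathcal{E}=\eta(\sA)$.
\item In (vi), the well-definedness computation has $\lambda_i$ and $\lambda_i^*$ interchanged. Evaluating at $\eta(\lambda_i^*)$ and multiplying by $\lambda_i$ would require $\sum_iE(y\lambda_i^*)\lambda_i=y$. That does not follow from the quasi-basis relation: it fails already for $\sB=\mathbb{C}\subseteq\mn{2}$ with a non-tracial faithful state. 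Instead, evaluate at $\eta(\lambda_i)$ and multiply on the right by $\lambda_i^*$. Equivalently, $\Tilde{E}(T):=\operatorname{Ind}(E)^{-1}\sum_iT(\lambda_i)\lambda_i^*$ is manifestly well defined on $\sL_\sB(\sA)$. It sends $xe_\sB y$ to $\operatorname{Ind}(E)^{-1}xy$ because $\sum_iE(y\lambda_i)\lambda_i^*=y$.
\item Positivity of $\Tilde{E}$ is not quite automatic, but it is one line. For $T=\sum_kx_ke_\sB y_k$ one gets $\Tilde{E}(T^*T)=\operatorname{Ind}(E)^{-1}\sum_{k,l}y_k^*E(x_k^*x_l)y_l\ge0$. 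This uses complete positivity of $E$, and that $\operatorname{Ind}(E)$ is central, positive and invertible; these are facts you are also borrowing from Watatani.
\end{itemize}
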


We now recall the notion of the interior angle between intermediate $C^*$-subalgebras.

\begin{definition}(\cite[Definition 5.1]{Bakshi_gupta_2021})\label{def:interior_angle}
    Let $\sB\subseteq\sA$ be an inclusion of unital $C^*$-algebras with a finite-index conditional expectation $E:\sA\to\sB$. For $\sC,\sD\in\operatorname{IMS}(\sB,\sA,E)$, the \emph{interior angle} between $\sC$ and $\sD$ with respect to $E$, denoted by $\alpha(\sC,\sD)$, is defined by
    \[
    \cos\left(\alpha(\sC,\sD)\right)
    =
    \frac{
    \left\|\langle e_{\sC}-e_{\sB},e_{\sD}-e_{\sB}\rangle_{\sA}\right\|
    }{
    \left\|e_{\sC}-e_{\sB}\right\|_{\sA}
    \left\|e_{\sD}-e_{\sB}\right\|_{\sA}
    }.
    \]
    We take $\alpha(\sC,\sD)$ to be the unique value in the interval $[0,\pi/2]$ satisfying the above equation.
\end{definition}

For further details on the interior angle between intermediate $C^*$-subalgebras, we refer the reader to \cite{Bakshi_gupta_2021}.

\subsection{Pimsner--Popa Probabilistic Index and Connes--St\o rmer Modified Entropy}

We conclude the preliminaries by recalling two further notions that will be used in \S~\ref{sec:m2}. The first is the Pimsner--Popa probabilistic index.

\begin{definition}[{\cite[Definition 2.12]{BakshiGuin2025}}]
    Let $\sM$ be a finite von Neumann algebra, and let $\sP,\sQ\subseteq\sM$ be von Neumann subalgebras. The Pimsner--Popa probabilistic index of the inclusion $\sQ\subseteq\sP$ is defined to be $\lambda(\sP,\sQ)^{-1}$, where
    \[
    \lambda(\sP,\sQ):=
    \sup\left\{
    t\geq 0:E_\sQ(x)\geq tx\text{ for all }x\in\sP_+
    \right\},
    \]
    and $E_\sQ$ denotes the trace-preserving conditional expectation from $\sM$ onto $\sQ$.
\end{definition}

The second notion we recall is the Connes--St\o rmer modified relative entropy between two von Neumann subalgebras of a finite von Neumann algebra. We use the following definition.

\begin{definition}[\cite{Choda2008, Choda2011}]
    Let $(\sM,\tau)$ be a finite von Neumann algebra, and let $\sP,\sQ\subseteq\sM$ be von Neumann subalgebras. Let $E_\sQ:\sM\to\sQ$ denote the trace-preserving conditional expectation. For $n\in\N$, let
    \[
    \gamma=\left\{x_1,\dots,x_n\in\sP_+:\sum_{j=1}^n x_j=1_\sM\right\}
    \]
    be a partition of $1_\sM$. Let $\eta:[0,\infty)\to\R$ be the continuous function defined by
    \[
    \eta(t):=-t\log t.
    \]
    For such a partition $\gamma$, define
    \[
    h_\gamma(\sP\mid\sQ)
    :=
    \sum_{j=1}^n
    \left(
    \tau\circ\eta\left(E_\sQ(x_j)\right)
    -
    \tau\circ\eta\left(x_j\right)
    \right).
    \]
    Then
    \[
    h(\sP\mid\sQ):=\sup_\gamma h_\gamma(\sP\mid\sQ)
    \]
    is called the \emph{Connes--St\o rmer modified relative entropy} between $\sP$ and $\sQ$.
\end{definition}

    \section{The Kadison-Kastler Distance in $\mn{2}$}
    \label{sec:m2}
    One of the main purposes of this section is to obtain an explicit formula for the quantity
\[
\dkk(u\Delta u^*,v\Delta v^*)
\]
for arbitrary $2\times2$ unitary matrices $u,v\in\mn{2}$. We begin with a few preliminary lemmas and first compute the Kadison--Kastler distance between $\Delta$ and $u\Delta u^*$ for a $2\times2$ unitary $u$ (Theorem \ref{thm:dkk-2-case}). As a consequence, we show that, in $\mn{2}$, the Kadison--Kastler distance can attain every value in the interval $[0,1]$ (Corollary \ref{cor:d_KK_range}). Moreover, for each $0\le r\le1$, Theorem \ref{thm:dkk-r-char} characterizes all $2\times2$ unitary matrices $u$ satisfying
\[
\dkk(\Delta,u\Delta u^*)=r.
\]

The case $r=1$ is particularly interesting. We show in Theorem \ref{thm:hadamard-uni} that
\[
\dkk(\Delta,u\Delta u^*)=1
\]
if and only if $u$ is a Hadamard unitary (see Definition \ref{def:Hadamard_uni}). Consequently, we obtain a decomposition of the $2\times2$ unitary group $\sU(\mn{2})$ according to the value of the Kadison--Kastler distance (see Remark \ref{rem:u-decom-dkk}):
\[
    \sU(\mn{2}) = \bigsqcup_{0\le r\le1}
    \left\{u\in\sU(\mn{2}):\dkk(\Delta,u\Delta u^*)=r\right\}.
\]

We then establish a unitary invariance property of the Kadison--Kastler distance for general $n$ (see Theorem \ref{prop:dkk-gen-masa}):
\[
\dkk\left(u\Delta^{(n)}u^*,v\Delta^{(n)}v^*\right)
=
\dkk\left(\Delta^{(n)},u^*v\Delta^{(n)}v^*u\right).
\]
This identity is the key ingredient in computing the desired distance $\dkk(u\Delta u^*,v\Delta v^*)$ for unitaries $u,v\in\mn{2}$ (see Corollary \ref{cor:d_KK_between_masa}). Using this formula, we also obtain an explicit characterization of the unitary normalizer
\[
\unor{\mn{2}}(\Delta):=
\{u\in\sU(\mn{2}):u\Delta u^*=\Delta\}
\]
(see Corollary \ref{cor:unitary_nor_char}). On the other hand, for an element $v$ of the \emph{groupoid normalizer} 
\[\grnor{\mn{2}}(\Delta):=\{ v \in \mn{2}: v\Delta v^*\cup v^*\Delta v \subseteq \Delta \text{ and }vv^*v = v \}\]
we show that the only possible values of $\dkk(\Delta,v\Delta v^*)$ are $0$ and $1$ (see Corollary \ref{cor:groupoid_d_KK}).

We conclude this section with our second main goal in dimension $2$, namely, investigating the relationship between the Kadison--Kastler distance and the interior angle. We prove (see Corollary \ref{cor:relation_angle_distance}) that, for every $2\times2$ unitary $u$,
\[
\dkk(\Delta,u\Delta u^*)
=
\sin\alpha(\Delta,u\Delta u^*).
\]
Thus, two quantities that are a priori unrelated turn out to encode essentially the same information for masas in the $2\times2$ setting.

\begin{lem}\label{lem:min-calc}
    For $t \in [-1, 1]$, we have
\[
\min_{\alpha \in \overline{\mathbb{D}}} \left( 1 + |\alpha|^2 - 2 \Re(\alpha) t \right)= 1-t^2.
\]
\end{lem}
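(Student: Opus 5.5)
Write $\alpha = x + iy$ with $x,y\in\R$ and $x^2+y^2\le 1$. The quantity to minimise is
\[
f(\alpha)=1+|\alpha|^2-2\Re(\alpha)t = 1 + x^2 + y^2 - 2xt .
\]
The plan is to complete the square, which gives
\[
f(\alpha) = (x-t)^2 + y^2 + 1 - t^2 .
\]
From this, $f(\alpha)\ge 1-t^2$ for every $\alpha\in\C$, and in particular for every $\alpha\in\overline{\mathbb{D}}$. This is the lower bound.

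For attainment, choose $\alpha=t$, that is, $x=t$ and $y=0$. Since $t\in[-1,1]$, we have $|\alpha|=|t|\le 1$, so $\alpha\in\overline{\mathbb{D}}$. At this point $f(t)=1-t^2$. Hence the infimum over the closed disc is attained and equals $1-t^2$, so the minimum exists and has the claimed value.

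There is no real obstacle here. The only point needing care is checking that the unconstrained minimiser $\alpha=t$ lies in the closed unit disc, and this is exactly where the hypothesis $t\in[-1,1]$ is used; for $|t|>1$ the minimum over the disc would instead be attained on the boundary. An equivalent argument is to note that $|\alpha|^2-2\Re(\alpha)t = |\alpha - t|^2 - t^2$ for real $t$, and then minimise the distance $|\alpha-t|$ over $\overline{\mathbb{D}}$, which is $0$ because $t\in\overline{\mathbb{D}}$.
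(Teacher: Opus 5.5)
Your proof is correct. Completing the square gives $f(\alpha)=(x-t)^2+y^2+1-t^2\ge 1-t^2$ for every $\alpha\in\C$, and $\alpha=t\in\overline{\mathbb{D}}$ attains this bound. The paper states this lemma without proof, and your argument is the natural one to supply. Note also that the proof of Lemma~\ref{lem:distance_cal} uses only the lower bound, and that part does not need the hypothesis $|t|\le 1$.
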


\begin{lem}\label{lem:distance_cal}
    Let $u=(u_{ij})\in \sU(\mn{2})$ be a $2\times 2$ unitary. Then we have
\begin{enumerate}
    \item[(i)] $\displaystyle \sup_{x \in \ball{\Delta}} \operatorname{dist} (x, \ball{u\Delta u^*}) = 2 |u_{11} u_{12}|.$
    \item[(ii)] $\displaystyle \sup_{y \in \ball{u\Delta u^*}} \operatorname{dist}(y, \ball{\Delta})=2|u_{11} u_{12}|.$
\end{enumerate}
\end{lem}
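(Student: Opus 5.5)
The idea is to write both masas as spans of the identity and a self-adjoint unitary, and reduce everything to Pauli-type algebra in $\mn{2}$. Put $S:=2E_{11}-\mathbb{I}_2$, so that $\Delta=\spn\{\mathbb{I}_2,S\}$. Put $P:=uE_{11}u^*$, the rank-one projection onto $\xi=(u_{11},u_{21})^T$, and $T:=2P-\mathbb{I}_2=uSu^*$, so that $u\Delta u^*=\spn\{\mathbb{I}_2,T\}$. Since $u$ is unitary, $|u_{11}|^2+|u_{12}|^2=1$, and we may write $|u_{11}|=\cos\theta$ with $\theta\in[0,\pi/2]$. Set
\[
c:=\cos 2\theta=|u_{11}|^2-|u_{12}|^2,\qquad s:=\sin2\theta=2|u_{11}u_{12}|.
\]
$S$ and $T$ are traceless self-adjoint unitaries with $\tfrac12\operatorname{Tr}(ST)=2|u_{11}|^2-1=c$. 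Hence we can decompose $T=cS+sR$, where $R$ is a traceless self-adjoint unitary that anticommutes with $S$; after absorbing a phase into the off-diagonal part, $S,R$ behave like two Pauli matrices. The key norm identity I will use is
\[
\|\beta\mathbb{I}_2+pS+qR\|\ \ge\ \sqrt{|p|^2+|q|^2}\quad\text{for }\beta,p,q\in\C,
\]
with equality when $\beta=0$ and $p,q$ are real.

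\emph{Upper bound in (i).} Take $x=\operatorname{diag}(a,b)\in\ball{\Delta}$, so that $x=\tfrac{a+b}{2}\mathbb{I}_2+\tfrac{a-b}{2}S$. Choose
\[
y:=\tfrac{a+b}{2}\mathbb{I}_2+c\,\tfrac{a-b}{2}\,T\in u\Delta u^*.
\]
The eigenvalues of $y$ are $\tfrac{1+c}{2}a+\tfrac{1-c}{2}b$ and $\tfrac{1-c}{2}a+\tfrac{1+c}{2}b$. These are convex combinations of $a$ and $b$, so $\|y\|\le1$. Moreover
\[
x-y=\tfrac{a-b}{2}\,(S-cT)=\tfrac{a-b}{2}\bigl((1-c^2)S-csR\bigr),
\]
whose norm is $\tfrac{|a-b|}{2}\sqrt{s^4+c^2s^2}=\tfrac{|a-b|}{2}\,s\le s$. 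This shows that the supremum in (i) is at most $2|u_{11}u_{12}|$.

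\emph{Lower bound in (i).} Take $x=S$ and an arbitrary $y=\beta\mathbb{I}_2+\gamma T\in\ball{u\Delta u^*}$. The eigenvalues of $y$ are $\beta\pm\gamma$, and both lie in the closed unit disc, so $|\gamma|\le1$. Now $S-y=-\beta\mathbb{I}_2+(1-\gamma c)S-\gamma sR$. By the norm inequality above, which I will verify by testing against the eigenvectors of the Hermitian-type part or by a direct $2\times2$ computation,
\[
\|S-y\|^2\ge|1-\gamma c|^2+|\gamma|^2s^2=1+|\gamma|^2-2c\,\Re(\gamma).
\]
Lemma \ref{lem:min-calc} with $t=c$ then gives $\|S-y\|^2\ge1-c^2=s^2$, so the supremum is at least $s$. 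The main obstacle is justifying this norm inequality with a complex $\beta$ present. My plan there is to compute $(S-y)^*(S-y)$ explicitly in the $e_1,e_2$ basis and bound its largest eigenvalue from below by half its trace minus the $\beta$-cross terms. If that fails, I will instead use $\|Z\|\ge\max\{|\lambda_1|,|\lambda_2|\}$ together with the formula for the eigenvalues, $-\beta\pm\sqrt{(1-\gamma c)^2+\gamma^2s^2}$ (for $S,R$ real), and choose the sign to defeat $\beta$.

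\emph{Part (ii).} This follows by symmetry: conjugating by $u^*$ turns the pair $(u\Delta u^*,\Delta)$ into $(\Delta,u^*\Delta u)$, and $\operatorname{Ad}u^*$ is isometric. Applying (i) to $u^*$ yields $2|\overline{u_{11}}\,\overline{u_{21}}|$, and this equals $2|u_{11}u_{12}|$ because $|u_{21}|=|u_{12}|$ for a $2\times2$ unitary.
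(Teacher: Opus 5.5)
\textbf{Verdict.} Your proposal is correct, and the one step you left open closes in a line.

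\textbf{Where you match the paper.} Your upper bound in (i) is the paper's argument: the same choice of $y$, whose eigenvalues are convex combinations of $a$ and $b$, and the same computation $\|S-cT\|=s$. Your part (ii) is the paper's conjugation argument, done more carefully. You rightly note that (i) must be applied to $u^*$, and that $|u_{21}|=|u_{12}|$ is needed to get back to $2|u_{11}u_{12}|$; the paper glosses over both points.

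\textbf{Where you differ: the lower bound.} The paper needs no norm inequality. It evaluates $\sigma_z-y$ on the eigenvector $v_1=(u_{11},u_{21})^T$ of $y$. This absorbs the scalar part of $y$ into the eigenvalue $\alpha$ and gives $\|(\sigma_z-y)v_1\|^2=1+|\alpha|^2-2t\Re(\alpha)$ exactly.

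Your Pauli-basis route also works, and the step you called the main obstacle is immediate. The matrices $\mathbb{I}_2,S,R$ are orthonormal for $\langle A,B\rangle=\tfrac12\operatorname{Tr}(A^*B)$: $\operatorname{Tr}S=\operatorname{Tr}R=0$, and $\operatorname{Tr}(SR)=0$ by anticommutation. For $Z=\beta\mathbb{I}_2+pS+qR$, the larger eigenvalue of the positive $2\times 2$ matrix $Z^*Z$ is at least half its trace, so
\[
\|Z\|^2\ \ge\ \tfrac12\operatorname{Tr}(Z^*Z)\ =\ |\beta|^2+|p|^2+|q|^2 .
\]
There are no $\beta$-cross terms to subtract. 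This even gives the slightly stronger bound
\[
\|S-y\|^2\ \ge\ |\beta|^2+|\gamma-c|^2+1-c^2 .
\]
So you need neither Lemma~\ref{lem:min-calc} nor the constraint $|\gamma|\le 1$.

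\textbf{Do not use your fallback.} For complex $\gamma$, the matrix $S-y$ need not be normal, and its spectral radius can be far below its norm. Take $c=0$, $s=1$, $\beta=0$, $\gamma=i$. Then $(S-iR)^2=0$, so the eigenvalue bound gives $0$, while the required lower bound is $1$ and in fact $\|S-iR\|=2$.

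\textbf{Comparison.} The paper's eigenvector test is the quickest way to the exact lower bound. Your Hilbert--Schmidt estimate works in a basis-free way and yields a little more.
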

\begin{proof}
\noindent (i) Any element $x \in \ball{\Delta}$ is a diagonal matrix bounded by $1$ in operator norm:$$x = \begin{pmatrix} \lambda & 0 \\ 0 & \mu \end{pmatrix} \quad \text{with} \quad \max(\vert{}\lambda\vert{}, \vert{}\mu\vert{}) \le 1$$We can express $x$ using the identity matrix $\mathbb{I}_{2}$ and the Pauli $z$ matrix $\sigma_z$:
$$x = \frac{\lambda+\mu}{2} \mathbb{I}_{2} + \frac{\lambda-\mu}{2} \sigma_z$$

The set $\ball{u\Delta u^*}$ consists of matrices of the form $y = u \text{diag}(\alpha, \beta) u^*$ where $\max(\vert{}\alpha\vert{}, \vert{}\beta\vert{}) \le 1$. We can similarly decompose any $y \in \ball{u\Delta u^*}$ as:
$$y = \frac{\alpha+\beta}{2} \mathbb{I}_{2} + \frac{\alpha-\beta}{2} y' \quad \text{where} \quad y' = u \sigma_z u^*$$    

Using the orthogonality of the rows of the unitary matrix $u$ (specifically $u_{11}\bar{u}_{21} + u_{12}\bar{u}_{22} = 0$), we can compute the exact form of the matrix $y'$:$$y' = \begin{pmatrix} \vert{}u_{11}\vert{}^2 - \vert{}u_{12}\vert{}^2 & 2u_{11}\bar{u}_{21} \\ 2u_{21}\bar{u}_{11} & \vert{}u_{21}\vert{}^2 - \vert{}u_{22}\vert{}^2 \end{pmatrix} = \begin{pmatrix} t & w \\ \bar{w} & -t \end{pmatrix}$$where $t = \vert{}u_{11}\vert{}^2 - \vert{}u_{12}\vert{}^2 \in [-1, 1]$. Because $y'$ is a unitary reflection, $t^2 + \vert{}w\vert{}^2 = 1$, meaning $1 - t^2 = \vert{}w\vert{}^2 = 4\vert{}u_{11}\vert{}^2\vert{}u_{12}\vert{}^2$.

\noindent\textbf{Claim.} $\sup_{x \in \ball{\Delta}} \operatorname{dist} (x, \ball{u\Delta u^*}) \le \sqrt{1 - t^2}$.

We want to show that for any $x \in \ball{\Delta}$, there exists some $y \in \ball{u\Delta u^*}$ such that $\Vert{}x - y\Vert{} \le \sqrt{1-t^2}$.

Let $x = \text{diag}(\lambda, \mu)\in \ball{\Delta}$. Choose $y$ by setting its eigenvalues $\alpha$ and $\beta$ as follows:$$\alpha = \lambda \frac{1+t}{2} + \mu \frac{1-t}{2}$$$$\beta = \lambda \frac{1-t}{2} + \mu \frac{1+t}{2}$$Because $t \in [-1, 1]$, both $\alpha$ and $\beta$ are convex combinations of $\lambda$ and $\mu$. Since $\lambda, \mu$ belong to the closed unit disk, so do $\alpha$ and $\beta$, meaning our chosen $y$ is inside $\ball{u\Delta u^*}$.

Hence, 
$$\frac{\alpha+\beta}{2} = \frac{\lambda+\mu}{2}\text{ and }\quad\frac{\alpha-\beta}{2} = \frac{\lambda-\mu}{2} t$$
Therefore, $$x - y = \frac{\lambda-\mu}{2} (\sigma_z - t y')$$

The matrix $(\sigma_z - t y')$ evaluates to
$$\sigma_z - t y' = \begin{pmatrix} 1-t^2 & -tw \\ -t\bar{w} & -(1-t^2) \end{pmatrix}$$

This is a trace-zero self-adjoint matrix. Its norm is the square root of its determinant's absolute value, i.e.,
$$\Vert{}\sigma_z - t y'\Vert{} = \sqrt{(1-t^2)^2 + t^2\vert{}w\vert{}^2} = \sqrt{(1-t^2)^2 + t^2(1-t^2)} = \sqrt{1-t^2}$$

Therefore, the distance from our arbitrary $x$ to $\ball{u\Delta u^*}$ is bounded by,
$$\Vert{}x - y\Vert{} = \left\vert{} \frac{\lambda-\mu}{2} \right\vert{} \sqrt{1-t^2}.$$
Since $\lambda, \mu$ are in the unit disk, the maximum value of $\vert{}\frac{\lambda-\mu}{2}\vert{}$ is $1$. Thus, $\text{dist}(x, \ball{u\Delta u^*}) \le \sqrt{1-t^2}$.

\noindent\textbf{Claim.} $\sup_{x \in \ball{\Delta}} \operatorname{dist} (x, \ball{u\Delta u^*}) \ge \sqrt{1 - t^2}$.

For this consider  $x = \sigma_z\in \ball{\Delta}$ (where $\lambda = 1, \mu = -1$). We will show that for every $y \in B_{u\Delta u^*}$, the distance $\Vert{} \sigma_z - y \Vert{}$ is at least $\sqrt{1-t^2}$.

Let $y = u \operatorname{diag}(\alpha, \beta) u^* \in \ball{u\Delta u^*}$. Then $v_1 = (u_{11}, u_{21})^T$ is an eigenvector of $y$ with eigenvalue $\alpha \in \overline{\mathbb{D}}$.

Hence, 
$$\Vert{} (\sigma_z - y) v_1 \Vert{}^2 = \Vert{} \sigma_z v_1 - \alpha v_1 \Vert{}^2 = \left\Vert{} \begin{pmatrix} (1-\alpha)u_{11} \\ -(1+\alpha)u_{21} \end{pmatrix} \right\Vert{}^2 = \vert{}1-\alpha\vert{}^2 \vert{}u_{11}\vert{}^2 + \vert{}1+\alpha\vert{}^2 \vert{}u_{21}\vert{}^2
$$
Using $\vert{}u_{11}\vert{}^2 = \frac{1+t}{2}$ and $\vert{}u_{21}\vert{}^2 = \frac{1-t}{2}$, we get
$$\Vert{} (\sigma_z - y) v_1 \Vert{}^2= (1 - 2\Re(\alpha) + \vert{}\alpha\vert{}^2)\frac{1+t}{2} + (1 + 2\Re(\alpha) + \vert{}\alpha\vert{}^2)\frac{1-t}{2} = 1 + \vert{}\alpha\vert{}^2 - 2\Re(\alpha)t.$$
Therefore we have
\[
\|\sigma_z - y \|^2 \ge \Vert{} (\sigma_z - y) v_1 \Vert{}^2 = 1 + \vert{}\alpha\vert{}^2 - 2\Re(\alpha)t \ge \min_{\alpha \in \overline{\mathbb{D}}} \left( 1 + |\alpha|^2 - 2 \Re(\alpha) t \right)= 1-t^2,
\]
where the last equality follows from Lemma \ref{lem:min-calc}. Thus, we get
\[
\sup_{x \in \ball{\Delta}} \operatorname{dist} (x, \ball{u\Delta u^*}) \ge \operatorname{dist}(\sigma_z, \ball{u\Delta u^*})= \inf_{y \in \ball{u\Delta u^*}} \| \sigma_z - y \| \ge \sqrt{1 - t^2}.
\]

\noindent (ii) First note that any $y \in \ball{u \Delta u^*}$ is of the form $y = u x'u^*$ for some diagonal matrix $x'\in \ball{\Delta}$. Then the result follows from the previous lemma by noticing the fact that norm is invariant under unitary conjugation and 
    \[
    \operatorname{dist}(y, \ball{\Delta}) = \inf_{x \in \ball{\Delta}} \|y - x\| = \inf_{x \in \ball{\Delta}} \| ux'u^* - x\| = \inf_{x\in \ball{\Delta}}\|x' - u^* x u\|= \inf_{y \in \ball{u\Delta u^*}} \| x' - y \|.
    \]
\end{proof}

\begin{theorem}\label{thm:dkk-2-case}
   For every $2\times 2$ unitary matrix $u = (u_{ij})\in \sU(\mn{2})$, the Kadison--Kastler distance between $\Delta$ and its unitary conjugate $u\Delta u^*$ is given by
\[
\dkk(\Delta, u\Delta u^*) = 2 |u_{11}u_{12}|.
\]
\end{theorem}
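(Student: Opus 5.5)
This follows directly from Lemma \ref{lem:distance_cal} together with the definition of the Kadison--Kastler distance as a Hausdorff distance between closed unit balls. By definition,
\[
\dkk(\Delta,u\Delta u^*)=\max\Bigl\{\sup_{x\in\ball{\Delta}}d\bigl(x,\ball{u\Delta u^*}\bigr),\ \sup_{y\in\ball{u\Delta u^*}}d\bigl(y,\ball{\Delta}\bigr)\Bigr\}.
\]
I would invoke part (i) of Lemma \ref{lem:distance_cal} for the first supremum and part (ii) for the second. Each equals $2|u_{11}u_{12}|$, so the maximum is $2|u_{11}u_{12}|$.

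The only point needing care is the identification $\sqrt{1-t^2}=2|u_{11}u_{12}|$, with $t=|u_{11}|^2-|u_{12}|^2$, which the lemma uses. This holds because $|u_{11}|^2+|u_{12}|^2=1$ (the rows of a unitary are unit vectors), so
\[
1-t^2=(|u_{11}|^2+|u_{12}|^2)^2-(|u_{11}|^2-|u_{12}|^2)^2=4|u_{11}|^2|u_{12}|^2.
\]

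Part (ii) of the lemma reduces, through the unitary conjugation $y=ux'u^*$, to a part (i)-type statement. In that reduction the roles of $u$ and $u^*$ are interchanged, so the quantity obtained is really $2|(u^*)_{11}(u^*)_{12}|=2|u_{11}u_{21}|$. I would therefore check that this agrees with $2|u_{11}u_{12}|$. For a $2\times2$ unitary, $|u_{12}|^2=1-|u_{11}|^2=|u_{21}|^2$, using that the first row and the first column each have unit norm. Hence $|u_{12}|=|u_{21}|$ and the two expressions coincide.

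This small symmetry check is the only potential obstacle. Once it is in place, the theorem is immediate from the lemma.
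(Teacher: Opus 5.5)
Your proposal is correct and matches the paper's proof, which likewise obtains the theorem by combining parts (i) and (ii) of Lemma \ref{lem:distance_cal} with the definition of $\dkk$ as the maximum of the two one-sided suprema. Your extra check is also worthwhile. The reduction in part (ii) really applies part (i) to $u^*$, so it yields $2|u_{11}u_{21}|$, and this equals $2|u_{11}u_{12}|$ because $|u_{12}|=|u_{21}|$; the paper's write-up of that step glosses over this by writing $u\Delta u^*$ where $u^*\Delta u$ is meant.
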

\begin{proof}
    This follows directly from Lemma \ref{lem:distance_cal} and the definition of the Kadison--Kastler distance.
\end{proof}

\begin{cor}\label{cor:unitary_nor_char}
    A $2\times 2$ unitary $u\in \unor{\mn{2}}(\Delta)$ if and only if either $u\in\Delta$, or $u_{11}=0=u_{22}$ and
$|u_{12}|=1=|u_{21}|.
$
\end{cor}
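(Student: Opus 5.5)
The plan is to use Theorem \ref{thm:dkk-2-case} together with the fact that $\dkk$ is a metric (Remark \ref{KK-facts}(ii)). Then $u\in\unor{\mn{2}}(\Delta)$, i.e.\ $u\Delta u^*=\Delta$, holds if and only if $\dkk(\Delta,u\Delta u^*)=0$. By Theorem \ref{thm:dkk-2-case} this is equivalent to $|u_{11}u_{12}|=0$. So the whole task reduces to translating the condition ``$u_{11}=0$ or $u_{12}=0$'' into the two stated alternatives, using the unitarity relations of a $2\times 2$ unitary.

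\emph{Case $u_{12}=0$.} The first row is a unit vector, so $|u_{11}|=1$. Orthogonality of the columns gives $\bar u_{11}u_{12}+\bar u_{21}u_{22}=0$, hence $\bar u_{21}u_{22}=0$. The first column is also a unit vector, so $|u_{11}|^2+|u_{21}|^2=1$, which forces $u_{21}=0$. Therefore $u$ is diagonal, that is, $u\in\Delta$.

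\emph{Case $u_{11}=0$.} The first row gives $|u_{12}|=1$ and the first column gives $|u_{21}|=1$. The second row then gives $|u_{21}|^2+|u_{22}|^2=1$, so $u_{22}=0$.

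\emph{Converse.} If $u\in\Delta$, then $u_{12}=0$. If $u_{11}=0=u_{22}$ with $|u_{12}|=1=|u_{21}|$, then $u_{11}=0$. In either case $2|u_{11}u_{12}|=0$, so $\dkk(\Delta,u\Delta u^*)=0$. Since $\dkk$ is a metric, this gives $u\Delta u^*=\Delta$.

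There is no real obstacle. The only points needing care are invoking the metric property, so that distance $0$ implies equality of the subalgebras (and not merely of their closed unit balls, which determine the subspaces anyway), and the routine entry bookkeeping from unitarity. Alternatively, one could check $u\Delta u^*=\Delta$ directly, but the route through $\dkk$ is the intended one.
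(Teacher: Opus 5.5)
Your proposal is correct and matches the paper's intended argument. The paper states this corollary right after Theorem \ref{thm:dkk-2-case} without a separate proof, because $u\Delta u^*=\Delta$ holds exactly when $\dkk(\Delta,u\Delta u^*)=2|u_{11}u_{12}|=0$, and what remains is the unitarity bookkeeping you carry out. One small point: in the case $u_{12}=0$ the column-orthogonality step is unnecessary, since $|u_{11}|=1$ together with $|u_{11}|^2+|u_{21}|^2=1$ already forces $u_{21}=0$.
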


It is well known that if $\sB \subsetneq \sC$ is a proper inclusion of subalgebras of $\Bh$, then $\dkk(\sB,\sC)=1$ (see \cite[Lemma 2.1]{Watatani_ino_2014}). Moreover, \cite{Gupta_Kumar_2024,Kumar_2026} provide non-trivial examples of subalgebras $\sA,\sB$ of $\Bh$ satisfying $\dkk(\sA,\sB)=1$. To the best of our knowledge, beyond these examples, there are no concrete examples demonstrating that the Kadison--Kastler distance between subalgebras can attain values strictly between $0$ and $1$. Using the preceding proposition, we now show that the Kadison--Kastler distance can, in fact, attain every value in the interval $[0,1]$.

\begin{cor}\label{cor:d_KK_range}
   For every $r \in [0,1]$, there exists a $2\times 2$ unitary matrix $u \in \sU(\mn{2})$ such that
\[
\dkk(\Delta, u\Delta u^*) = r.
\]
\end{cor}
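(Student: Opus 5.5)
We use the explicit formula of Theorem \ref{thm:dkk-2-case} together with the rotation matrices $r_\theta$. For $\theta\in[0,\pi/4]$, the matrix $u=r_\theta$ is a real orthogonal, hence unitary, $2\times 2$ matrix with entries
\[
u_{11}=\cos\theta, \qquad u_{12}=-\sin\theta .
\]
Theorem \ref{thm:dkk-2-case} then gives
\[
\dkk(\Delta, r_\theta\Delta r_\theta^*) = 2|\cos\theta\sin\theta| = |\sin 2\theta| = \sin 2\theta .
\]

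Now fix $r\in[0,1]$. We choose
\[
\theta=\tfrac12\arcsin r\in[0,\pi/4],
\]
so that $\sin 2\theta = r$. Taking $u=r_\theta$ yields $\dkk(\Delta,u\Delta u^*)=r$, as required.

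Equivalently, the map $\theta\mapsto\sin 2\theta$ is continuous on $[0,\pi/4]$ with endpoint values $0$ and $1$, so the intermediate value theorem also shows that every $r\in[0,1]$ is attained. The two endpoints serve as sanity checks: $\theta=0$ gives $u=\mathbb{I}_2$ and distance $0$, while $\theta=\pi/4$ gives the Hadamard-type rotation and distance $1$.

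There is no real obstacle here. The only points requiring care are that $r_\theta$ is genuinely unitary and that the entries $u_{11}$ and $u_{12}$ are read off correctly before applying Theorem \ref{thm:dkk-2-case}.
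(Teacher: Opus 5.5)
Your proof is correct and follows the paper's own argument exactly: the paper likewise takes the rotation $r_\theta$ with $\theta=\frac{1}{2}\arcsin r$ and reads off $\dkk(\Delta,r_\theta\Delta r_\theta^*)=2|\cos\theta\sin\theta|=\sin 2\theta=r$ from Theorem~\ref{thm:dkk-2-case}. Your intermediate value theorem remark is an equivalent alternative to the explicit choice of $\theta$, not a different route.
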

\begin{proof}
   Take the $2\times 2$ rotation matrix corresponding to the angle $\theta=\frac{1}{2}\arcsin(r)$, namely,
\[
u=\begin{pmatrix}
\cos\theta & -\sin\theta\\
\sin\theta & \cos\theta
\end{pmatrix}.
\]
\end{proof}

We next characterize precisely those $2\times2$ unitary matrices for which the Kadison--Kastler distance is equal to a prescribed value $r\in [0, 1]$. The following result provides the desired characterization.

\begin{prop}\label{prop:hada-uni}
    Let $r$ be a real number with $0 \le r \le 1$. A $2\times 2$ matrix $u$ is unitary and satisfies $\vert{}u_{11}u_{12}\vert{}=r/2$ if and only if $u$ has the form
\[
u=
\begin{pmatrix}
a e^{i\phi_1} & b e^{i\phi_2}\\
b e^{i\phi_3} & -a e^{i(\phi_2+\phi_3-\phi_1)}
\end{pmatrix},
\]
where $\phi_1,\phi_2,\phi_3\in[0,2\pi)$, and the magnitudes $a,b$ are given by exactly one of the following two pairs:
\begin{enumerate}
    \item[(i)] $a = \frac{\sqrt{1+r}+\sqrt{1-r}}{2}$ and $b = \frac{\sqrt{1+r}-\sqrt{1-r}}{2}$;
    \item[(ii)] $a = \frac{\sqrt{1+r}-\sqrt{1-r}}{2}$ and $b = \frac{\sqrt{1+r}+\sqrt{1-r}}{2}$.
\end{enumerate}
\end{prop}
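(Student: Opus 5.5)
The plan is to prove the two implications separately, using only the standard structure of $2\times 2$ unitaries together with the elementary algebra of the system $a^2+b^2=1$, $ab=r/2$.

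\emph{Forward direction.} Let $u$ be unitary with $|u_{11}u_{12}|=r/2$.

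1. Since the first row of $u$ is a unit vector, I set $a:=|u_{11}|$ and $b:=|u_{12}|$, so that $a^2+b^2=1$.
2. Because the columns are also unit vectors, $|u_{21}|^2=1-a^2=b^2$ and $|u_{22}|=a$. I then write $u_{11}=ae^{i\phi_1}$, $u_{12}=be^{i\phi_2}$, $u_{21}=be^{i\phi_3}$ and $u_{22}=ae^{i\psi}$.
3. Orthogonality of the rows gives $u_{11}\bar u_{21}+u_{12}\bar u_{22}=0$, that is,
\[
ab\left(e^{i(\phi_1-\phi_3)}+e^{i(\phi_2-\psi)}\right)=0.
\]
When $ab\neq0$ this forces $e^{i\psi}=-e^{i(\phi_2+\phi_3-\phi_1)}$, which is exactly the displayed form of $u_{22}$.
4. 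When $ab=0$ (equivalently $r=0$), the phase $\psi$ is unconstrained by orthogonality. Since one of $a,b$ vanishes, the vanishing entries' phases are free, so the phases can always be re-chosen so that the formula holds. Concretely, if $b=0$, take $\phi_3:=\psi+\phi_1+\pi-\phi_2$; if $a=0$, choose $\phi_1$ appropriately in the same way.

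\emph{Solving for the magnitudes.} I then solve $a^2+b^2=1$, $ab=r/2$ with $a,b\ge0$. This gives
\[
(a+b)^2=1+r,\qquad (a-b)^2=1-r,
\]
so $a+b=\sqrt{1+r}$ and $a-b=\pm\sqrt{1-r}$. These two sign choices yield exactly the pairs (i) and (ii).

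\emph{Converse direction.} Any matrix of the stated form with $a^2+b^2=1$ is unitary. To check this, I verify that the columns have norm $a^2+b^2=1$ and that the inner product of the columns is
\[
ab\left(e^{i(\phi_2-\phi_1)}-e^{i(\phi_2-\phi_1)}\right)=0.
\]
For the magnitudes, both pairs satisfy $a^2+b^2=\tfrac{(1+r)+(1-r)}{2}=1$ and $ab=\tfrac{(1+r)-(1-r)}{4}=r/2$. Hence $|u_{11}u_{12}|=ab=r/2$.

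\emph{Main obstacle.} The only delicate points are the degenerate cases. First, when $r=0$ the phase of the zero entries must be adjusted as in step 4. Second, when $r=1$ the two pairs coincide ($a=b=1/\sqrt2$), so the phrase ``exactly one of'' should be read as ``one of''. I would state both of these explicitly in the proof.
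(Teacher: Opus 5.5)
Your proof is correct and follows essentially the same route as the paper: row and column normalization give equal moduli $a=|u_{11}|=|u_{22}|$ and $b=|u_{12}|=|u_{21}|$, the identities $(a+b)^2=1+r$ and $(a-b)^2=1-r$ give the two pairs, and row orthogonality fixes the phase of $u_{22}$ when $ab\neq 0$. Your handling of the degenerate cases is more careful than the paper's, which treats only $r>0$ in the phase step and does not note that the two pairs coincide at $r=1$; one small correction is that when $a=0$ no choice of $\phi_1$ is needed, since the $(2,2)$ entry $-a e^{i(\phi_2+\phi_3-\phi_1)}$ is then $0$ for every choice of phases.
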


\begin{proof}
    \noindent($\implies$). Assume $u = \begin{pmatrix} u_{11} & u_{12} \\ u_{21} & u_{22} \end{pmatrix}$ is a unitary matrix such that $\vert{}u_{11} u_{12}\vert{} = r/2$. 
    
    Since the the rows and columns must form orthonormal bases, the entries of $u$ must satisfy:
    \begin{itemize}
        \item[1.] $\vert{}u_{11}\vert{}^2 + \vert{}u_{12}\vert{}^2 = 1$
        \item[2.] $\vert{}u_{11}\vert{}^2 + \vert{}u_{21}\vert{}^2 = 1 \implies \vert{}u_{21}\vert{} = \vert{}u_{12}\vert{}$
        \item[3.] $\vert{}u_{12}\vert{}^2 + \vert{}u_{22}\vert{}^2 = 1 \implies \vert{}u_{22}\vert{} = \vert{}u_{11}\vert{}$
    \end{itemize}
    Let $a = \vert{}u_{11}\vert{} = \vert{}u_{22}\vert{}$ and $b = \vert{}u_{12}\vert{} = \vert{}u_{21}\vert{}$. We are given $\vert{}u_{11} u_{12}\vert{} = r/2$, which translates to $ab = r/2$, or $2ab = r$. Therefore we get:
    $$(a+b)^2 = a^2 + b^2 + 2ab = 1 + r \implies a+b = \sqrt{1+r}$$
    $$(a-b)^2 = a^2 + b^2 - 2ab = 1 - r \implies a-b = \pm\sqrt{1-r}$$
    Adding and subtracting these two equations yields exactly the two pairs for $(a, b)$ stated in the theorem.

    Now we can express the entries of $u$ in polar form using our defined magnitudes $a$ and $b$ and arbitrary phases $\phi_1, \phi_2, \phi_3, \phi_4 \in [0, 2\pi)$:$$u = \begin{pmatrix} a e^{i\phi_1} & b e^{i\phi_2} \\ b e^{i\phi_3} & a e^{i\phi_4} \end{pmatrix}$$

    Unitarity also requires the rows to be orthogonal to each other:
    $$ab \left( e^{i(\phi_1-\phi_3)} + e^{i(\phi_2-\phi_4)} \right) =(a e^{i\phi_1})(b e^{-i\phi_3}) + (b e^{i\phi_2})(a e^{-i\phi_4})=u_{11} \bar{u}_{21} + u_{12} \bar{u}_{22} = 0$$
    If $r > 0$, then $ab \neq 0$, thus:
    $$e^{i(\phi_1-\phi_3)} = -e^{i(\phi_2-\phi_4)}$$

    Solving for $\phi_4$:$$\phi_4 \equiv \phi_2 + \phi_3 - \phi_1 - \pi \pmod{2\pi}$$

    Since $e^{-i\pi} = e^{i\pi} = -1$, substituting $\phi_4$ back into the expression for $u_{22}$ gives:$$u_{22} = a e^{i(\phi_2+\phi_3-\phi_1-\pi)} = -a e^{i(\phi_2+\phi_3-\phi_1)}$$

    \noindent ($\impliedby$). This follows immediately.
\end{proof}

From the preceding proposition, we immediately obtain the following characterization.

\begin{theorem}\label{thm:dkk-r-char}
    Let $0\le r \le 1$ and let $u$ be a $2\times 2$ unitary matrix. Then
    \[
    \dkk (\Delta, u\Delta u^*) = r
    \iff
    |u_{ij}| = \frac{1}{\sqrt{1+r}\pm \sqrt{1-r}}
    \text{ for all } i, j.
    \]
\end{theorem}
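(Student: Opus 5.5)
The plan is to combine Theorem \ref{thm:dkk-2-case} with Proposition \ref{prop:hada-uni}. By Theorem \ref{thm:dkk-2-case}, for a unitary $u$ we have $\dkk(\Delta,u\Delta u^*)=2|u_{11}u_{12}|$. So the condition $\dkk(\Delta,u\Delta u^*)=r$ is equivalent to $|u_{11}u_{12}|=r/2$. Proposition \ref{prop:hada-uni} says this holds exactly when $u$ has the displayed polar form, with moduli $\{|u_{11}|,|u_{22}|\}=\{a\}$ and $\{|u_{12}|,|u_{21}|\}=\{b\}$. Here $(a,b)$ is one of the two pairs $\bigl(\tfrac{\sqrt{1+r}\pm\sqrt{1-r}}{2},\tfrac{\sqrt{1+r}\mp\sqrt{1-r}}{2}\bigr)$. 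So the whole proof amounts to reading off the moduli of the entries from that proposition.

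For the forward direction I take $u$ with $\dkk=r$ and apply Theorem \ref{thm:dkk-2-case} and then Proposition \ref{prop:hada-uni}. This shows that every entry has modulus in $\bigl\{\tfrac{\sqrt{1+r}+\sqrt{1-r}}{2},\tfrac{\sqrt{1+r}-\sqrt{1-r}}{2}\bigr\}$. The diagonal entries share one of these values and the off-diagonal entries share the other.

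For the converse I assume each $|u_{ij}|$ lies in this two-element set. Unitarity gives $|u_{11}|^2+|u_{12}|^2=1$. If both were equal to the same value $c$, then $2c^2=1$. A short computation shows $\bigl(\tfrac{\sqrt{1+r}\pm\sqrt{1-r}}{2}\bigr)^2=\tfrac{1\pm\sqrt{1-r^2}}{2}$, and this equals $\tfrac12$ only when $r=1$, in which case the two values coincide anyway. Hence $\{|u_{11}|,|u_{12}|\}$ is exactly the pair, and their product is
\[
\frac{(\sqrt{1+r})^2-(\sqrt{1-r})^2}{4}=\frac{r}{2}.
\]
Theorem \ref{thm:dkk-2-case} then gives $\dkk(\Delta,u\Delta u^*)=r$.

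The main obstacle I expect is a normalization issue: the moduli have to be matched to the form written in the statement, and "for all $i,j$" has to be interpreted correctly. Using $(\sqrt{1+r}+\sqrt{1-r})(\sqrt{1+r}-\sqrt{1-r})=2r$, one gets
\[
\frac{\sqrt{1+r}\mp\sqrt{1-r}}{2}=\frac{r}{\sqrt{1+r}\pm\sqrt{1-r}}.
\]
So the displayed formula $\tfrac{1}{\sqrt{1+r}\pm\sqrt{1-r}}$ literally agrees with $a,b$ only when $r=1$. I would therefore state and prove the result in the form $|u_{ij}|\in\bigl\{\tfrac{\sqrt{1+r}\pm\sqrt{1-r}}{2}\bigr\}$, equivalently $\tfrac{r}{\sqrt{1+r}\mp\sqrt{1-r}}$. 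The reading I will adopt is that the sign choice may depend on $(i,j)$: the diagonal entries take one value and the off-diagonal entries take the other. Under that reading the argument above is complete, and the degenerate cases $r=0$ (a diagonal or anti-diagonal $u$) and $r=1$ (all moduli $1/\sqrt2$) are covered directly.
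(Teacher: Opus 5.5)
Your proposal is correct and follows the paper's own argument: Theorem \ref{thm:dkk-2-case} turns $\dkk(\Delta,u\Delta u^*)=r$ into $|u_{11}u_{12}|=r/2$, and Proposition \ref{prop:hada-uni} then gives the moduli of the entries. Your direct converse, using $|u_{11}|^2+|u_{12}|^2=1$ and $ab=r/2$, is also fine. You are right that the displayed formula is a misprint: the intended moduli are $\frac{\sqrt{1+r}\pm\sqrt{1-r}}{2}$, as in Proposition \ref{prop:hada-uni}, whereas the printed values $\frac{1}{\sqrt{1+r}\pm\sqrt{1-r}}$ are these divided by $r$ and coincide with them only when $r=1$.
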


\begin{definition}[Hadamard Matrix]
A complex Hadamard matrix is a matrix $H \in \mn{n}$ whose all entries have the same modulus and which satisfies $HH^* = n\mathbb{I}_n$.
\end{definition}
\begin{definition}[Hadamard Unitary]\label{def:Hadamard_uni}
A complex $n \times n$ matrix $W$ is said to be a \emph{Hadamard unitary} if $\sqrt{n}W$ is a complex Hadamard matrix in $\mn{n}$.
\end{definition}

\begin{prop}\label{prop:hadamard-uni}
   The following conditions are equivalent:
\begin{itemize}
    \item[(i)] $W$ is a Hadamard unitary.
    \item[(ii)] $W\in \sU(\mn{n})$ and $|W_{ij}| = \frac{1}{\sqrt{n}}$ for all $i,j$.
\end{itemize}
\end{prop}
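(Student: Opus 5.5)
The equivalence follows from the definitions together with one norm identity, so the plan is a direct two-way argument.

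For (i)$\Rightarrow$(ii), assume $\sqrt{n}W$ is a complex Hadamard matrix. Then $(\sqrt{n}W)(\sqrt{n}W)^* = n\mathbb{I}_n$, which gives $WW^*=\mathbb{I}_n$. Since $\mn{n}$ is finite-dimensional, a one-sided inverse is two-sided, so $W^*W=\mathbb{I}_n$ as well and $W\in\sU(\mn{n})$. The entries of $\sqrt{n}W$ share a common modulus $c\ge 0$. To find $c$, I compare diagonal entries of $(\sqrt{n}W)(\sqrt{n}W)^*=n\mathbb{I}_n$. The $(i,i)$ entry is $\sum_{j=1}^n |\sqrt{n}W_{ij}|^2 = nc^2$, and it must equal $n$, so $c=1$. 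Hence $|W_{ij}| = c/\sqrt{n} = 1/\sqrt{n}$ for all $i,j$.

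For (ii)$\Rightarrow$(i), assume $W$ is unitary with $|W_{ij}|=1/\sqrt{n}$. Then every entry of $\sqrt{n}W$ has modulus $1$, so all entries have the same modulus. Moreover $(\sqrt{n}W)(\sqrt{n}W)^* = nWW^* = n\mathbb{I}_n$. Thus $\sqrt{n}W$ is a complex Hadamard matrix, and $W$ is a Hadamard unitary by Definition~\ref{def:Hadamard_uni}.

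There is no real obstacle. The only point needing care is determining the common modulus, which the diagonal-entry computation settles. One also has to remember that $HH^*=n\mathbb{I}_n$ alone gives only $WW^*=\mathbb{I}_n$; full unitarity comes from finite-dimensionality.
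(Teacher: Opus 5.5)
Your proof is correct. The paper states this proposition without proof and treats it as an immediate unpacking of the definitions of Hadamard matrix and Hadamard unitary; your direct two-way verification is exactly that unpacking, and it rightly includes the one step that needs checking, namely that the diagonal of $HH^*=n\mathbb{I}_n$ forces the common modulus to be $1$.
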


It is easy to see by Proposition \ref{prop:hadamard-uni}, the matrix $r_\theta$ is a Hadamard unitary if and only if $\theta = \frac{\pi}{4} + n\frac{\pi}{2}$, for all $n\in\mathbb{Z}.$

\begin{theorem}\label{thm:hadamard-uni}
    A unitary $u\in \mn{2}$ satisfies $\dkk(\Delta, u\Delta u^*)=1$ if and only if $u$ is a Hadamard unitary if and only if the following is a commuting square:
    \[
\begin{array}{ccc}
\Delta  & \subset & \mathbb{M}_2(\mathbb{C}) \\
\cup && \cup \\
\mathbb{C} & \subset & u\Delta u^*
\end{array}
\]
\end{theorem}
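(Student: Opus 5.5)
The plan is to prove the two equivalences separately, both reducing to the condition $|u_{11}| = |u_{12}| = 1/\sqrt{2}$.

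\emph{Distance equals 1 iff Hadamard.} By Theorem~\ref{thm:dkk-2-case}, $\dkk(\Delta,u\Delta u^*)=1$ if and only if $2|u_{11}u_{12}|=1$. Since $u$ is unitary, $|u_{11}|^2+|u_{12}|^2=1$. The AM--GM inequality then gives
\[
2|u_{11}||u_{12}|\le |u_{11}|^2+|u_{12}|^2 = 1,
\]
with equality exactly when $|u_{11}|=|u_{12}|=1/\sqrt2$. As in the proof of Proposition~\ref{prop:hada-uni}, we have $|u_{21}|=|u_{12}|$ and $|u_{22}|=|u_{11}|$. Hence all four entries have modulus $1/\sqrt2$, which by Proposition~\ref{prop:hadamard-uni} means $u$ is a Hadamard unitary. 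Conversely, a Hadamard unitary gives $2|u_{11}u_{12}|=2\cdot\tfrac12=1$. Alternatively, one can read this off from Theorem~\ref{thm:dkk-r-char} with $r=1$, where both signs give $|u_{ij}|=1/\sqrt2$.

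\emph{Commuting square iff Hadamard.} We work with the trace-preserving conditional expectations $E_\Delta$ and $E_{u\Delta u^*}$ and the normalized trace $\tau=\tfrac12\operatorname{Tr}$. The square is commuting precisely when $E_\Delta\circ E_{u\Delta u^*}=E_{\mathbb C}=\tau(\cdot)\mathbb I_2$. Equivalently, $E_\Delta(y)=\tau(y)\mathbb I_2$ for all $y\in u\Delta u^*$.

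The algebra $u\Delta u^*$ is spanned by $\mathbb I_2$ and $y'=u\sigma_z u^*$. So it suffices to require $E_\Delta(y')\in\mathbb C\mathbb I_2$ with the correct trace. Using the matrix computed in the proof of Lemma~\ref{lem:distance_cal}, $y'$ has diagonal $(t,-t)$, where $t=|u_{11}|^2-|u_{12}|^2$. Since $\tau(y')=0$, we get $E_\Delta(y')=\operatorname{diag}(t,-t)$. This equals $\tau(y')\mathbb I_2=0$ if and only if $t=0$, that is, $|u_{11}|=|u_{12}|=1/\sqrt2$. This is again the Hadamard condition.

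The only mild subtlety is fixing the convention for ``commuting square'': we use trace-preserving expectations and the condition $E_\Delta E_{u\Delta u^*}=E_{\mathbb C}$. We should also check that this is equivalent to the other orders and formulations, for example $E_{u\Delta u^*}E_\Delta=E_{\mathbb C}$. This follows by taking adjoints with respect to the trace inner product.
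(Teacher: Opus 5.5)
Your proof is correct. The first equivalence is essentially the paper's. The paper reads it off from Theorem~\ref{thm:dkk-r-char} at $r=1$, while you go directly from Theorem~\ref{thm:dkk-2-case} via $2|u_{11}||u_{12}|\le|u_{11}|^2+|u_{12}|^2=1$. Your route has a minor advantage: it avoids the displayed formula of Theorem~\ref{thm:dkk-r-char}. That formula agrees with the moduli $(\sqrt{1+r}\pm\sqrt{1-r})/2$ of Proposition~\ref{prop:hada-uni} only at $r=1$, where both give $1/\sqrt2$, so nothing is lost here.

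The genuine difference is in the commuting-square part. The paper simply invokes the standard characterization in \cite[\S 5.2.2]{JonesSundar97} of when $\Delta$ and $u\Delta u^*$ form a commuting square. You instead prove it by hand. You reduce $E_\Delta\circ E_{u\Delta u^*}=E_{\mathbb C}$ to the condition $E_\Delta(y)=\tau(y)\mathbb I_2$ for $y\in u\Delta u^*=\operatorname{span}\{\mathbb I_2,u\sigma_zu^*\}$. You then note that $E_\Delta(u\sigma_zu^*)=\operatorname{diag}(t,-t)$ vanishes if and only if $t=0$. Your adjoint remark correctly handles the other order in the two-sided definition of a commuting square, since trace-preserving expectations are orthogonal projections on $L^2(\mathbb{M}_2(\mathbb{C}),\tau)$.

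Your argument is self-contained. It is exactly the $n=2$ case of the paper's later Proposition~\ref{prop:exp-and-hada-uni}, which carries out the same diagonal computation for general $n$ via Lemma~\ref{lem:exp-for}. The paper's citation instead places the statement within the general theory of Hadamard (spin-model) commuting squares, at the cost of being a black box at this point in the paper.
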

\begin{proof}
    The proof follows from Theorem \ref{thm:dkk-r-char} and \cite[\S 5.2.2]{JonesSundar97}.
\end{proof}

\begin{remark}\label{rem:u-decom-dkk}
    Define a function $\fK: \sU(\mn{2}) \to [0, 1]$ by
    \[
    \fK(u):= \dkk( \Delta, u\Delta u^*).
    \]
    By Corollary \ref{cor:cont-dkk}, the map $\fK$ is a continuous surjection and, in particular, a closed map. Hence, $\sU(\mn{2})$ admits the decomposition
    \[
    \sU(\mn{2}) = \bigsqcup_{0\le r\le 1} \fK^{-1} (r).
    \]
    Moreover, Proposition \ref{prop:hada-uni} gives
    \[
    \fK^{-1}(r) = \left\{ u \in \sU(\mn{2}): |u_{ij}| = \frac{1}{\sqrt{1+r}\pm \sqrt{1-r}} \text{ for all }i, j \right\}.
    \]
    By Proposition \ref{prop:hadamard-uni}, the fiber $\fK^{-1}(1)$ is precisely the set of all $2\times 2$ Hadamard unitary matrices. This suggests viewing the unitaries in $\fK^{-1}(r)$ as a natural generalization of Hadamard unitaries, which we refer to as \emph{$r$-Hadamard unitaries}. In particular, Theorem \ref{thm:dkk-r-char} yields the equivalent decomposition
    \[
    \sU(\mn{2}) = \bigsqcup_{0\le r\le 1}
    \left\{ u \in \sU(\mn{2}): \dkk (\Delta, u\Delta u^*) = r \right\}.
    \]
\end{remark}

The following result will be used in Corollary \ref{cor:d_KK_between_masa} to compute the Kadison--Kastler distance between $u\Delta u^*$ and $v\Delta v^*$ for two $2\times2$ unitary matrices $u,v$.

 \begin{theorem}[Unitary Invariance of Distance]\label{prop:dkk-gen-masa}
Let $u,v \in \sU(\mn{n})$ be two $n\times n$ unitaries. Then
\[
\dkk\left(u\Delta^{(n)} u^*, v\Delta^{(n)} v^*\right) = \dkk\left(\Delta^{(n)}, u^*v\Delta^{(n)}v^*u\right).
\]
\end{theorem}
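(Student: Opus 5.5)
The plan is to show the general fact that conjugation by a fixed unitary $w\in\sU(\mn{n})$ is an isometry of $\dkk$: for any subspaces $\sY,\sZ\subseteq\mn{n}$,
\[
\dkk(w\sY w^*, w\sZ w^*)=\dkk(\sY,\sZ).
\]
The statement then follows by taking $w=u^*$, $\sY=u\Delta^{(n)}u^*$ and $\sZ=v\Delta^{(n)}v^*$. Indeed, $u^*(u\Delta^{(n)}u^*)u=\Delta^{(n)}$ and $u^*(v\Delta^{(n)}v^*)u=(u^*v)\Delta^{(n)}(u^*v)^*$.

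To prove the isometry property, let $\mathrm{Ad}_w(x):=wxw^*$. This map is a linear bijection of $\mn{n}$ with inverse $\mathrm{Ad}_{w^*}$, and it preserves the operator norm because $\|wxw^*\|=\|x\|$ for unitary $w$. Consequently it maps closed unit balls onto closed unit balls: $\mathrm{Ad}_w\bigl(\ball{\sY}\bigr)=\ball{w\sY w^*}$. The inclusion $\subseteq$ follows from norm preservation. The reverse inclusion follows by applying the same argument to $\mathrm{Ad}_{w^*}$, which is the inverse map.

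Next we pass this through the definition of the Kadison--Kastler distance. For $y\in\ball{\sY}$ we compute
\[
d\bigl(wyw^*,\ball{w\sZ w^*}\bigr)=\inf_{z\in\ball{\sZ}}\|wyw^*-wzw^*\|=\inf_{z\in\ball{\sZ}}\|y-z\|=d\bigl(y,\ball{\sZ}\bigr).
\]
Taking the supremum over $y\in\ball{\sY}$, and using that $wyw^*$ ranges over all of $\ball{w\sY w^*}$, shows that the first term in the definition of $\dkk$ is unchanged. The second term is handled symmetrically, so the maximum is unchanged.

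There is no genuine obstacle here. The only point requiring care is the bijection between the unit balls, that is, surjectivity onto $\ball{w\sY w^*}$. Without it the suprema could a priori differ, and it is exactly what the inverse map $\mathrm{Ad}_{w^*}$ supplies. The argument is the same idea already used in part (ii) of Lemma \ref{lem:distance_cal}, now applied to two conjugated algebras at once.
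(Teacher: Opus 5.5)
Your proof is correct and is essentially the paper's argument: unitary conjugation is a norm-preserving linear bijection that carries closed unit balls onto closed unit balls, so each one-sided distance term in $\dkk$ is unchanged. The paper writes this out only for the pair $(\Delta^{(n)},u\Delta^{(n)}u^*)$ and says ``similarly'' for the general pair, whereas you prove the invariance $\dkk(w\sY w^*,w\sZ w^*)=\dkk(\sY,\sZ)$ for arbitrary subspaces, specialize with $w=u^*$, and make explicit the surjectivity of $\mathrm{Ad}_w$ onto $\ball{w\sY w^*}$ that the paper uses tacitly.
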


\begin{proof}
It is enough to show that 
\[
\dkk \left(\Delta^{(n)}, u\Delta^{(n)} u^*\right) = \dkk\left(u^*\Delta^{(n)} u, \Delta^{(n)}\right).
\]
By definition,
\[
\dkk\left(\Delta^{(n)},u\Delta^{(n)}u^*\right) = \max \left\{\sup_{x \in \ball{\Delta^{(n)}}} d\left(x, \ball{u\Delta^{(n)} u^*}\right), \sup_{z\in \ball{\Delta^{(n)}}}d\left(uzu^*, \ball{\Delta^{(n)}}\right)\right\},
\]
where, $d\left(x, \ball{u\Delta^{(n)} u^*}\right)= \inf_{z \in \ball{\Delta^{(n)}}} \|x- uzu^*\|$.

But for any $x\in \ball{\Delta^{(n)}}$,
$$\|x- uzu^*\|= \|u^{*}xu- z\|.$$
Hence,
$$d\left(x, \ball{u\Delta^{(n)} u^*}\right)= \inf_{z \in \ball{\Delta^{(n)}}} \|x- uzu^*\|=  \inf_{z \in \ball{\Delta^{(n)}}} \|u^{*}xu- z\|= d\left(u^{*}xu, \ball{\Delta^{(n)}}\right).$$
This implies that
$$\sup_{x \in \ball{\Delta^{(n)}}}d\left(x, \ball{u\Delta^{(n)} u^*}\right)= \sup_{x \in \ball{\Delta^{(n)}}}d\left(u^{*}xu, \ball{\Delta^{(n)}}\right). $$
Similarly, for any $b\in \ball{\Delta^{(n)}}$, we have
\begin{align*}
d\left(ubu^*, \ball{\Delta^{(n)}}\right)&= \inf_{z\in \ball{\Delta^{(n)}}}\|ubu^*- z\|\\
&= \inf_{z\in \ball{\Delta^{(n)}}}\|b- u^{*}zu\|\\
&= d\left(b, \ball{u^{*}\Delta^{(n)}u}\right).
\end{align*}
This implies that
$$\sup_{b \in \ball{\Delta^{(n)}}}d\left(ubu^*, \ball{\Delta^{(n)}}\right)= \sup_{b \in \ball{\Delta^{(n)}}}d\left(b, \ball{u^{*}\Delta^{(n)} u}\right). $$
Hence,
$$\dkk\left(\Delta^{(n)}, u\Delta^{(n)} u^*\right) = \dkk\left(u^*\Delta^{(n)} u, \Delta^{(n)}\right).$$
Similarly, we can conclude 
$$\dkk\left(u\Delta^{(n)} u^*, v\Delta^{(n)} v^*\right) = \dkk\left(\Delta^{(n)}, u^*v \Delta^{(n)} v^*u\right).$$
\end{proof}

\begin{cor}\label{cor:d_KK_between_masa}
For any two $2\times 2$ unitary matrices $u,v\in \sU(\mn{2})$, we have
\[
\dkk(u\Delta u^*, v\Delta v^*)
=
2\left|\bar{u}_{11}v_{11}+\bar{u}_{21}v_{21}\right|
\left|\bar{u}_{11}v_{12}+\bar{u}_{21}v_{22}\right|.
\]
\end{cor}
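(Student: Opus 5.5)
The plan is to combine the unitary invariance of Theorem \ref{prop:dkk-gen-masa} with the explicit formula of Theorem \ref{thm:dkk-2-case}. First, I apply Theorem \ref{prop:dkk-gen-masa} with $n=2$:
\[
\dkk(u\Delta u^*, v\Delta v^*) = \dkk\left(\Delta, w\Delta w^*\right), \qquad w := u^*v .
\]
Since $u$ and $v$ are unitary, $w$ is a $2\times 2$ unitary. Theorem \ref{thm:dkk-2-case} therefore applies to $w$ and gives
\[
\dkk(u\Delta u^*, v\Delta v^*) = 2|w_{11}w_{12}| = 2|w_{11}|\,|w_{12}| .
\]

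It remains to compute the entries $w_{11}$ and $w_{12}$ of $w=u^*v$. The $(i,k)$ entry of $u^*$ is $\bar{u}_{ki}$. Hence
\[
w_{1j} = \sum_{k=1}^{2} (u^*)_{1k} v_{kj} = \bar{u}_{11}v_{1j} + \bar{u}_{21}v_{2j}.
\]
Taking $j=1$ and $j=2$ gives $w_{11} = \bar{u}_{11}v_{11}+\bar{u}_{21}v_{21}$ and $w_{12} = \bar{u}_{11}v_{12}+\bar{u}_{21}v_{22}$. Substituting these into the formula above yields the statement.

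The argument has no real obstacle. The only point needing care is the index bookkeeping in $u^*$: the conjugate-transpose must give $\bar u_{11},\bar u_{21}$, the first column of $u$, and not the first row.
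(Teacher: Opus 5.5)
Your proposal is correct and is essentially the paper's own argument. The paper's proof just cites the unitary invariance of Theorem~\ref{prop:dkk-gen-masa} together with the formula of Theorem~\ref{thm:dkk-2-case}. Your computation of $(u^*v)_{11}$ and $(u^*v)_{12}$ makes explicit the entry bookkeeping that the paper leaves implicit.
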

\begin{proof}
     It follows from Propostion \ref{prop:dkk-gen-masa} and Theorem \ref{thm:dkk-2-case}.
\end{proof}

\begin{prop}\label{prop:dkk-proj}
Let $e$ be a rank-$k$ projection in $\mn{n}$ with $0<k<n$. Then
\[
\dkk\left(\Delta^{n},\,\C e\right)=1.
\]
\end{prop}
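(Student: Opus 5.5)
The plan is to bound one of the two one-sided suprema in the definition of $\dkk$ from below by $1$, and combine this with the universal upper bound $\dkk\le 1$ from Remark~\ref{KK-facts}(i). No computation with the diagonal structure of $\Delta^{(n)}$ is needed beyond the fact that it contains the identity.

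\textbf{Choice of test element.} The closed unit ball of the one-dimensional subspace $\C e$ is $\ball{\C e}=\{\lambda e:\lambda\in\C,\ |\lambda|\le 1\}$, since $\|\lambda e\|=|\lambda|$ for a nonzero projection $e$. As a test element in $\ball{\Delta^{(n)}}$ I take the identity $\mathbb{I}_n$, which is diagonal and has norm $1$.

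\textbf{Lower bound for the distance.} Because $e$ has rank $k<n$, its kernel is nonzero. Pick a unit vector $\xi$ with $e\xi=0$. Then for every $\lambda$ with $|\lambda|\le 1$,
\[
\|\mathbb{I}_n-\lambda e\|\ \ge\ \|(\mathbb{I}_n-\lambda e)\xi\|=\|\xi\|=1 .
\]
Hence $d\bigl(\mathbb{I}_n,\ball{\C e}\bigr)\ge 1$, and therefore
\[
\sup_{x\in\ball{\Delta^{(n)}}} d\bigl(x,\ball{\C e}\bigr)\ \ge\ 1 .
\]
By the definition of $\dkk$ as a maximum of the two one-sided suprema, this gives $\dkk(\Delta^{(n)},\C e)\ge 1$. (The value $\lambda=0$ shows the distance is in fact exactly $1$, though only the lower bound is needed.)

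\textbf{Conclusion.} Remark~\ref{KK-facts}(i) gives $\dkk\le 1$ for any pair of subspaces, so equality holds. The only point requiring care is the use of $k<n$ to produce the vector $\xi$ in the kernel of $e$. The hypothesis $k>0$ only ensures $e\neq 0$, so that $\C e$ is a genuine one-dimensional space and the description of its unit ball is correct. I do not expect a substantial obstacle.
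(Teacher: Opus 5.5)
Your proof is correct and follows the paper's route: you test $\mathbb{I}_n\in\ball{\Delta^{(n)}}$ against $\ball{\C e}=\{\lambda e: |\lambda|\le 1\}$ and then invoke $\dkk\le 1$. The only difference is that you bound $\|\mathbb{I}_n-\lambda e\|\ge 1$ using a unit vector in $\ker e$ instead of diagonalizing $e$. This makes clear that the hypothesis $k<n$ is what forces the eigenvalue $1$, whereas the paper's write-up attributes this to $k>0$.
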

\begin{proof}
    It is enough to show that 
    $$\sup_{x \in \ball{\Delta^{n}}}\, \operatorname{dist}\left( x, \mathbb{T}e \right)\ge 1.$$
    Take $x = \mathbb{I}_n \in \mn{n}$. Then clearly $x \in \ball{\Delta^{n}}$. We claim that $\operatorname{dist}\left( x, \mathbb{T}e \right) =1$.

    By the spectral theorem, there exists some unitary matrix $u$ that diagonalizes $e$:
    $$e = u \operatorname{diag}(1, \dots, 1, 0, \dots, 0) u^*$$
    where there are $k$ many $1$'s and $n-k$ many $0$'s. Hence,
    $$x - \lambda e = u \operatorname{diag}(1-\lambda, \dots, 1-\lambda, 0, \dots, 0) u^*$$

    Hence, $\Vert{}x - \lambda e\Vert{} = \max(\vert{}1 - \lambda\vert{}, 1).$ Since $k>0$ one of the eigenvalues is strictly $1$. Thus 
    $$\operatorname{dist}\left( x, \mathbb{T}e \right) =\inf_{\vert{}\lambda\vert{} \le 1} \Vert{}x - \lambda e\Vert{} = 1.$$
    This completes the proof.
\end{proof}

\begin{theorem}\label{thm:dkk-partial-iso}
   Let $v$ be a partial isometry in $\mn{2}$. Then
\[
\dkk\left(\Delta,v\Delta v^*\right)=
\begin{cases}
2|v_{11}||v_{12}|, & \text{if } v^*v=1,\\
1, & \text{otherwise}.
\end{cases}
\]
\end{theorem}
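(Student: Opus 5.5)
The plan is to split on the rank of the partial isometry $v\in\mn{2}$, which is $0$, $1$ or $2$.

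\emph{Rank $2$.} Here $v^*v=1$, so $v$ is a unitary. Theorem \ref{thm:dkk-2-case} then gives $\dkk(\Delta,v\Delta v^*)=2|v_{11}v_{12}|$ directly.

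\emph{Rank $0$.} Then $v=0$ and $v\Delta v^*=\{0\}$. Taking $x=\mathbb{I}_2\in\ball{\Delta}$, its distance to $\{0\}$ is $1$. By Remark \ref{KK-facts}(i) the distance is at most $1$, so it equals $1$.

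\emph{Rank $1$.} Write $v=\xi\eta^*$ with unit vectors $\xi,\eta\in\C^2$. For $d\in\Delta$,
\[
v d v^*=\langle d\eta,\eta\rangle\,\xi\xi^*,
\]
so $v\Delta v^*\subseteq \C e$ with $e=\xi\xi^*$ a rank-one projection. Taking $d=\mathbb{I}_2$ gives $v\Delta v^*\ni e$. Hence $v\Delta v^*=\C e$ as a linear subspace.

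Its unit ball is $\mathbb{T}e$ in the paper's notation, i.e.\ $\{\lambda e:|\lambda|\le1\}$. Proposition \ref{prop:dkk-proj} with $n=2$, $k=1$ then gives $\dkk(\Delta,\C e)=1$.

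If one prefers not to rely on the subspace identification, the lower bound can be checked by hand. For $x=\mathbb{I}_2$ we have $\|\mathbb{I}_2-\lambda e\|=\max(|1-\lambda|,1)\ge1$ for every $\lambda$, and the upper bound $1$ is automatic.

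Finally, in both the rank $0$ and rank $1$ cases $v^*v\neq1$, so these are exactly the ``otherwise'' branch of the statement. The only point needing care is the identification $v\Delta v^*=\C e$ in the rank-one case. The main ingredient there is that $d\mapsto\langle d\eta,\eta\rangle$ is a nonzero functional, since it is $1$ at $d=\mathbb{I}_2$; this makes the image the full line $\C e$ rather than just $\{0\}$.
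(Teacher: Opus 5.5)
Your proof is correct and follows the same route as the paper, which combines Theorem \ref{thm:dkk-2-case} for the case $v^*v=1$ with Proposition \ref{prop:dkk-proj} for the remaining case. You also check explicitly that $v\Delta v^*=\C e$ when $v$ has rank one, and you treat $v=0$ separately. Proposition \ref{prop:dkk-proj} is stated for $0<k<n$ and so does not literally cover $v=0$, so these two points fill in details the paper leaves implicit.
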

\begin{proof}
    This follows from Proposition \ref{prop:dkk-proj} and Theorem \ref{thm:dkk-2-case}.
\end{proof}

\begin{cor}\label{cor:groupoid_d_KK}
Let $v$ be a partial isometry in $\mn{2}$. If
$v\in \grnor{\mn{2}}(\Delta)$, then
\[
\dkk\left(\Delta,v\Delta v^*\right)\in\{0,1\}.
\]
Moreover, for $u\in\grnor{\mn{2}}(\Delta)$, we have $\dkk\left(\Delta,u\Delta u^*\right)=0$
if and only if $u\in\unor{\mn{2}}(\Delta)$. Consequently, no Hadamard unitary belongs to $\grnor{\mn{2}}(\Delta)$.
\end{cor}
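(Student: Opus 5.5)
The plan is to first pin down exactly which partial isometries $v\in\mn{2}$ can lie in $\grnor{\mn{2}}(\Delta)$, and then read off the distance from Theorem \ref{thm:dkk-partial-iso}. We split into cases by the rank of $v^*v$.

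\textbf{Case $v^*v\neq 1$.} Here Theorem \ref{thm:dkk-partial-iso} gives $\dkk(\Delta,v\Delta v^*)=1$ directly. If $v=0$ then $v\Delta v^*=\{0\}$; the formula of Theorem \ref{thm:dkk-partial-iso}, or Proposition \ref{prop:dkk-proj} with $\mathbb{I}_2\in\ball{\Delta}$ at distance $1$ from $0$, still gives the value $1$.

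\textbf{Case $v^*v=1$.} Then $v$ is a unitary. The groupoid normaliser condition $v\Delta v^*\subseteq\Delta$ is an inclusion of two $2$-dimensional algebras, since conjugation by a unitary is injective, so it forces $v\Delta v^*=\Delta$. Hence $v\in\unor{\mn{2}}(\Delta)$.

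By Corollary \ref{cor:unitary_nor_char}, either $v\in\Delta$, so $v_{12}=0$, or $v_{11}=0$. In both subcases $2|v_{11}||v_{12}|=0$. So by Theorem \ref{thm:dkk-2-case} (equivalently Theorem \ref{thm:dkk-partial-iso}) the distance is $0$. This proves $\dkk(\Delta,v\Delta v^*)\in\{0,1\}$.

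\textbf{The ``moreover'' part.} Take $u\in\grnor{\mn{2}}(\Delta)$.
\begin{itemize}
\item If $u\in\unor{\mn{2}}(\Delta)$, then $u\Delta u^*=\Delta$, so the distance is $0$.
\item Conversely, suppose the distance is $0$. By the case analysis above, $u$ cannot be a non-unitary partial isometry, since those give distance $1$. So $u^*u=1$, and then the unitary case shows $u\in\unor{\mn{2}}(\Delta)$.
\end{itemize}
Alternatively, distance $0$ directly gives $u\Delta u^*=\Delta$, because $\dkk$ is a metric (Remark \ref{KK-facts}).

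\textbf{Hadamard unitaries.} Suppose a Hadamard unitary $W$ lay in $\grnor{\mn{2}}(\Delta)$. Being unitary, it would lie in $\unor{\mn{2}}(\Delta)$ by the unitary case, so $\dkk(\Delta,W\Delta W^*)=0$. But Theorem \ref{thm:hadamard-uni} gives the value $1$, a contradiction. One can also see this from Corollary \ref{cor:unitary_nor_char}, since a Hadamard unitary has all entries of modulus $1/\sqrt2$, so neither alternative there can hold.

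\textbf{Main obstacle.} The only delicate step is the care needed with the dichotomy in Theorem \ref{thm:dkk-partial-iso}. When $v^*v\neq1$ but $vv^*$ is a projection of rank one, $v\Delta v^*$ is $\C e$ for a rank-one projection $e$, which is exactly the situation covered by Proposition \ref{prop:dkk-proj}. The degenerate case $v=0$ must be handled separately, as above.
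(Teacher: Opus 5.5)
Your proof is correct and takes essentially the paper's route---the dichotomy of Theorem \ref{thm:dkk-partial-iso} for the first two assertions and Theorem \ref{thm:hadamard-uni} for the Hadamard claim. It also makes explicit two points the paper's one-line proof leaves implicit: a unitary $v$ with $v\Delta v^*\subseteq\Delta$ must satisfy $v\Delta v^*=\Delta$ by a dimension count, which is exactly what forces $2|v_{11}||v_{12}|=0$ and rules out Hadamard unitaries; and the case $v=0$, which Proposition \ref{prop:dkk-proj} does not literally cover. Only your alternative argument for the ``moreover'' part is incomplete as written: from $u\Delta u^*=\Delta$ you still need that $u$ then has rank $2$, and hence is unitary, before concluding $u\in\unor{\mn{2}}(\Delta)$.
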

\begin{proof}
The first part follows from Theorem \ref{thm:dkk-partial-iso}. On the other hand for a Hadamard unitary the distance is exactly $1$ (see Theorem \ref{thm:hadamard-uni}), thus it cannot belong to the normaliser.    
\end{proof}

\begin{remark}
    Note that 
    \[
    v:=\frac{1}{2}\begin{pmatrix}
        1 &1\\1& 1
    \end{pmatrix}
    \]
    is a partial isometry such that $\dkk( \Delta, v \Delta v^*) = 1$ (from Theorem \ref{thm:dkk-partial-iso}) but $v \not \in \grnor{\mn{2}}(\Delta)$.
\end{remark}

\begin{prop}[{\cite[Theorem 4.4]{gupta2024}}]\label{prop:gupta-modified}
    Let $u=(\lambda_{ij}) \in \sU(\mn{2})$. Then
\[
\cos \alpha(\Delta, u\Delta u^*)
=
\sqrt{1-\left(2|\lambda_{11}||\lambda_{12}|\right)^2}.
\]
\end{prop}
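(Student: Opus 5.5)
The statement is cited from \cite[Theorem 4.4]{gupta2024}. To prove it directly, I will compute the interior angle from Definition \ref{def:interior_angle} for the inclusion $\C\otimes\mathbb{I}_2\subseteq\mn{2}$ with $E=\frac12\operatorname{Tr}(-)\otimes\mathbb{I}_2$. Both $\Delta$ and $u\Delta u^*$ lie in $\operatorname{IMS}$ via their trace-preserving conditional expectations. The first step is to realize everything concretely: $E$ is the normalized trace, so $\mathcal{E}=\mn{2}$ with the Hilbert–Schmidt inner product $\langle x,y\rangle=\frac12\operatorname{Tr}(x^*y)$. Hence $\sA_1=\sL_\sB(\sA)\cong \mathbb{M}_4(\C)$ by Proposition \ref{prop:gupta-jones-proj}(v).

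In this picture the Jones projections have explicit forms. The projection $e_\sB$ is the orthogonal projection onto $\C\mathbb{I}_2$. The projection $e_\Delta$ is the projection onto $\Delta$, and $e_{u\Delta u^*}=\operatorname{Ad}_u\, e_\Delta\, \operatorname{Ad}_u^{*}$. Consequently $p:=e_\Delta-e_\sB$ is the rank-one projection onto $\C\sigma_z$, and $q:=e_{u\Delta u^*}-e_\sB$ is the rank-one projection onto $\C\,u\sigma_z u^*$. By Proposition \ref{prop:gupta-jones-proj}(iii) these differences are indeed projections.

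Next I will unwind the $\sA$-valued inner product on $\sA_1$, which is $\langle x,y\rangle_\sA=\tilde E(x^*y)$ with $\tilde E$ the dual expectation of Proposition \ref{prop:gupta-jones-proj}(vi). For rank-one operators $\theta_{\xi,\eta}$ on $\mathcal{E}$, I expect
\[
\tilde E(\theta_{\xi,\eta})=\operatorname{Ind}(E)^{-1}\,\xi\eta^*,
\]
where $\operatorname{Ind}(E)=4$. This should follow by writing $\theta_{\xi,\eta}=\xi e_\sB \eta^*$ for $\xi,\eta\in\sA$, since $\xi e_\sB\eta^*$ maps $\eta(x)$ to $\eta(\xi E(\eta^* x))$.

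With $s=\sigma_z$ and $s'=u\sigma_z u^*$ (both unitary self-adjoint with $\frac12\operatorname{Tr}(s^2)=1$), we get
\[
p^*q=\theta_{s,s}\theta_{s',s'}=\langle s,s'\rangle\,\theta_{s,s'},
\qquad
\langle p,q\rangle_\sA=\tfrac14\langle s,s'\rangle\, s s'^*,
\]
and similarly $\langle p,p\rangle_\sA=\frac14 ss^*=\frac14\mathbb{I}_2$. Since $ss'^*$ is unitary, the norms cancel and
\[
\cos\alpha=|\langle s,s'\rangle|=\tfrac12|\operatorname{Tr}(\sigma_z u\sigma_z u^*)|.
\]
From the matrix $y'=u\sigma_zu^*$ computed in the proof of Lemma \ref{lem:distance_cal}, this trace is $\frac12(t-(-t))=t=|u_{11}|^2-|u_{12}|^2$. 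Then $t^2=1-4|u_{11}|^2|u_{12}|^2$, which gives the formula.

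The main obstacle is the correct identification of $\langle\cdot,\cdot\rangle_\sA$ and its normalization in the $C^*$-module $\sA_1$. Specifically, I need to check that the inner product is $\tilde E(x^*y)$ rather than a variant, and that the constant $\operatorname{Ind}(E)^{-1}$ cancels between numerator and denominator. It does, since all three inner products carry the same factor. I would also confirm the rank-one description $e_\sC-e_\sB$ by checking it on the basis $\{\mathbb{I}_2,\sigma_z,\ldots\}$ orthonormal for $\frac12\operatorname{Tr}$.
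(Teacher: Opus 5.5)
Your argument is correct, and it reaches the paper's key intermediate object by a different, coordinate-free route.

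The paper defers to Gupta--Sharma. There one writes the Jones projections as explicit $4\times 4$ matrices and applies the partial trace $\tilde E=\mathrm{id}\otimes\frac12\operatorname{Tr}$ to get the $2\times 2$ matrix $T=\tilde E\bigl((e_\Delta-e_\sB)(e_{u\Delta u^*}-e_\sB)\bigr)$. One then checks that $T^*T=(a^2+|z|^2)\mathbb{I}_2$, simplifies $a^2+|z|^2$ in the variable $p=|\lambda_{11}|^2$ to obtain $\|T\|=\frac14|2p-1|$, and finally divides by $\|e_\Delta-e_\sB\|_\sA\,\|e_{u\Delta u^*}-e_\sB\|_\sA=\frac14$.

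You instead identify $e_\sC-e_\sB$ as the rank-one projection onto the trace-zero part of $\sC$ and write $\theta_{\xi,\eta}=\xi e_\sB\eta^*$. Proposition~\ref{prop:gupta-jones-proj}(vi) then gives at once $T=\frac14\,t\,\sigma_z(u\sigma_zu^*)$ with $t=\langle\sigma_z,u\sigma_zu^*\rangle$. Expanding this with the matrix $y'$ from Lemma~\ref{lem:distance_cal} reproduces the paper's $T$ entry by entry: $\frac14t^2=(p-\frac12)^2$, and $\frac14 t\,w$ is its $z$.

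Because $T$ is a scalar multiple of a unitary, its norm needs no computation. This also explains why $T^*T$ came out scalar in the paper. Your normalizations match the paper's, since Lemma~\ref{lem:eu-e-1} gives $\|e^u-e_1\|=\frac12$ for $n=2$.

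Your version also shows directly that $\cos\alpha$ is the cosine of the Hilbert--Schmidt angle between $\sigma_z$ and $u\sigma_zu^*$, namely $|t|$. This is the same $t$ that gives $\dkk=\sqrt{1-t^2}$ in Lemma~\ref{lem:distance_cal}, so Corollary~\ref{cor:relation_angle_distance} becomes almost immediate.

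What the matrix/tensor viewpoint buys the paper is scalability. In $\mn{n}$ the differences $e_\sC-e_\sB$ have rank $n-1$, and your shortcut no longer reduces $T$ to a scalar times a unitary. By contrast, the description $e^u=\sum_k p_k\otimes\overline{p_k}$ leads directly to the general formula of Theorem~\ref{thm:angle-betwn-masas}.
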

\begin{proof}
    The proof of this fact is given in \cite[Theorem 4.4]{gupta2024}. However, there is a small typo in both the statement and the proof there. We correct the relevant part of the proof below. Since the argument is not new, we follow the same approach and notation as in \cite{gupta2024}, except for the calculation where the typo occurs. Let $T$ be the following $2\times 2$ matrix, as defined there:
    \[
    T:= \begin{pmatrix}
        \frac{|\lambda_{11}|^4 + |\lambda_{12}|^4 }{2} - \frac{1}{4} & \frac{\overline{\lambda_{21}} \lambda_{11} \left(|\lambda_{11}|^2 - |\lambda_{12}|^2 \right)}{2} \\
        \frac{\lambda_{21} \overline{\lambda_{11}} \left(|\lambda_{12}|^2 - |\lambda_{11}|^2 \right)}{2} & 
        \frac{|\lambda_{11}|^4 + |\lambda_{12}|^4 }{2} - \frac{1}{4}
    \end{pmatrix}
    \]
    To make the algebra cleaner, let $p = \vert{}\lambda_{11}\vert{}^2$. This means $\vert{}\lambda_{12}\vert{}^2 = 1 - p$ and $\vert{}\lambda_{21}\vert{}^2 = 1 - p$.
    
    Now set $a = (p - 1/2)^2$ and $z = T_{12}$. Then the matrix $T$ has the form
    $$T = \begin{pmatrix} a & z \\ -\bar{z} & a \end{pmatrix}$$

    A direct calculation gives
    \[
    T^*T = \left( a^2 + |z|^2\right) \mathbb{I}_2.
    \]
    Therefore:
    \[
    \|T\| = \sqrt{a^2 + |z|^2}= \sqrt{\frac{1}{4}\left(p - \frac{1}{2} \right)^2}= \frac{1}{4}|2p-1|= \frac{1}{4}\sqrt{\left( 2p-1 \right)^2}= \frac{1}{4}\sqrt{1-4p(1-p)}= \frac{1}{4}\sqrt{1-\left( 2 |\lambda_{11}| |\lambda_{12}| \right)^2}
    \]
    Following the remainder of the argument in \cite{gupta2024}, we obtain the desired result.
\end{proof}

The following result establishes the relationship between the Kadison--Kastler distance and the interior angle mentioned at the beginning of this section.

\begin{cor}\label{cor:relation_angle_distance}
    For every $2\times 2$ unitary matrix $u \in \mn{2}$, we have
\[
\dkk(\Delta, u\Delta u^*) = \sin \alpha(\Delta, u\Delta u^*).
\]
\end{cor}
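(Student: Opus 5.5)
The plan is to combine Theorem \ref{thm:dkk-2-case} with Proposition \ref{prop:gupta-modified}; no new estimates are needed. Write $u=(u_{ij})$ and set $r:=2|u_{11}||u_{12}|$. By Theorem \ref{thm:dkk-2-case} we have $\dkk(\Delta,u\Delta u^*)=r$. Proposition \ref{prop:gupta-modified}, applied with $\lambda_{ij}=u_{ij}$, gives
\[
\cos\alpha(\Delta,u\Delta u^*)=\sqrt{1-r^2}.
\]

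First we check that $r\in[0,1]$, so the square root is real and $r$ is a legitimate sine value. Since $|u_{11}|^2+|u_{12}|^2=1$ by unitarity, the AM--GM inequality gives $2|u_{11}||u_{12}|\le |u_{11}|^2+|u_{12}|^2=1$. Alternatively, Remark \ref{KK-facts}(i) already shows $r\le 1$.

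Next, by Definition \ref{def:interior_angle}, $\alpha:=\alpha(\Delta,u\Delta u^*)$ lies in $[0,\pi/2]$, so $\sin\alpha\ge 0$ and
\[
\sin\alpha=\sqrt{1-\cos^2\alpha}=\sqrt{1-(1-r^2)}=\sqrt{r^2}=r,
\]
where the last step uses $r\ge 0$. Combining this with the first paragraph yields $\dkk(\Delta,u\Delta u^*)=r=\sin\alpha(\Delta,u\Delta u^*)$.

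The only point needing care is the sign and range bookkeeping: one must use that $\alpha\in[0,\pi/2]$ to take the nonnegative square root for $\sin\alpha$, and that $0\le r\le 1$. Everything else follows directly from the two cited results.
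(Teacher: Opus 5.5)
Your proof is correct and follows the paper's argument exactly: combine Theorem \ref{thm:dkk-2-case} with Proposition \ref{prop:gupta-modified}. The paper leaves implicit the check $0\le 2|u_{11}||u_{12}|\le 1$ and the use of $\alpha\in[0,\pi/2]$ to take the nonnegative root for $\sin\alpha$; you state both explicitly.
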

\begin{proof}
    This follows from Theorem \ref{thm:dkk-2-case} and Proposition \ref{prop:gupta-modified}.
\end{proof}

\begin{cor}\label{cor:cont-dkk}
    If $(u_i)$ is a net of unitary matrices in $\sU(\mn{2})$ such that $u_i \to 1$, then
\[
\dkk(\Delta, u_i\Delta u_i^*) \to 0,
\]
and consequently,
\[
\alpha(\Delta, u_i\Delta u_i^*) \to 0.
\]
\end{cor}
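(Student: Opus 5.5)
The plan is to reduce everything to the explicit formula of Theorem \ref{thm:dkk-2-case}, namely $\dkk(\Delta, u\Delta u^*) = 2|u_{11}u_{12}|$. This formula is a continuous function of the entries of $u$. So the first assertion follows once we evaluate it at the limit point $u=1$.

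Concretely, write $u_i = ((u_i)_{jk})$. Convergence $u_i \to 1$ in $\mn{2}$ (all norms being equivalent in finite dimensions) gives entrywise convergence $(u_i)_{11}\to 1$ and $(u_i)_{12}\to 0$. Since $|(u_i)_{11}|\le 1$ for a unitary, we get
\[
0\le \dkk(\Delta,u_i\Delta u_i^*) = 2|(u_i)_{11}||(u_i)_{12}| \le 2|(u_i)_{12}| \to 0,
\]
which proves the first claim.

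For the angle, I would invoke Corollary \ref{cor:relation_angle_distance}, which gives $\sin\alpha(\Delta,u_i\Delta u_i^*) = \dkk(\Delta,u_i\Delta u_i^*)\to 0$. Equivalently, by Proposition \ref{prop:gupta-modified}, $\cos\alpha(\Delta,u_i\Delta u_i^*) = \sqrt{1-\dkk(\Delta,u_i\Delta u_i^*)^2}\to 1$. The angle is by definition taken in $[0,\pi/2]$, where $\sin$ is a homeomorphism onto $[0,1]$ with continuous inverse $\arcsin$. Hence $\alpha(\Delta,u_i\Delta u_i^*) = \arcsin\big(\dkk(\Delta,u_i\Delta u_i^*)\big)\to \arcsin 0 = 0$.

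There is essentially no obstacle here. The only point needing care is to use the restriction $\alpha\in[0,\pi/2]$, so that inverting $\sin$ is legitimate and continuous. It is also worth noting, for Remark \ref{rem:u-decom-dkk}, that the same argument shows $\fK(u)=2|u_{11}u_{12}|$ is continuous on all of $\sU(\mn{2})$, not merely at $1$.
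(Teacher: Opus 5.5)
Your argument is correct and is the paper's approach: the paper's one-line proof cites Corollary~\ref{cor:relation_angle_distance}, and you spell out the two steps it leaves implicit. These are the continuity of $u\mapsto 2|u_{11}u_{12}|$ from Theorem~\ref{thm:dkk-2-case} (via the bound $2|(u_i)_{11}||(u_i)_{12}|\le 2|(u_i)_{12}|$), and the continuity of $\arcsin$ on $[0,1]$, which applies because $\alpha\in[0,\pi/2]$. Your side remark that the formula gives continuity of $\fK$ on all of $\sU(\mn{2})$ is also right, and it is what Remark~\ref{rem:u-decom-dkk} actually needs, since the corollary as stated only gives continuity at $1$.
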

\begin{proof}
    This directly follows from the Corollary \ref{cor:relation_angle_distance}.
\end{proof}

    \section{Relationship Between $\alpha$, $h$, $\lambda$, and $\dkk$ in $\mn{2}$}
    \label{sec:relationship}
    
The main purpose of this section is to establish the relationship between three well-known quantities---the interior angle $\alpha$, the Connes--St\o rmer modified entropy $h(-\mid-)$, and the Pimsner--Popa probabilistic index $\lambda$---and the Kadison--Kastler distance $\dkk$. In Corollary \ref{cor:equival}, we show that for two masas in $\mn{2}$, the Kadison--Kastler distance is maximal if and only if their interior angle is $\frac{\pi}{2}$, if and only if their modified relative entropy is maximal, if and only if their Pimsner--Popa constant is $\frac{1}{2}$. Moreover, Theorem \ref{thm:equivalence} shows that these equivalent conditions hold if and only if $u^*v$ is a Hadamard unitary.

We begin by introducing some basic notation and conventions that will be used throughout the remainder of the paper.

The conditional expectation $E_0:\mn{n}\to\C\otimes\mathbb{I}_n\cong\C$ is given by
\[
E_0(X):=\frac{1}{n}\operatorname{Tr}(X)\otimes\mathbb{I}_n.
\]
The conditional expectation $E_{\Delta^{(n)}}:\mn{n}\to\Delta^{(n)}$ is given by
\[
E_{\Delta^{(n)}}(X)
=
\operatorname{diag}(X_{11},\dots,X_{nn}).
\]
For any unitary $u\in\mn{n}$, we define the conditional expectation
$E_u:\mn{n}\to u\Delta^{(n)}u^*$ by
\begin{equation}\label{eq:unitary-conditional}
    E_u:=\operatorname{Ad}_u\circ E_{\Delta^{(n)}}\circ\operatorname{Ad}_{u^*}.
\end{equation}
Thus,
\[
E_u(X)=uE_{\Delta^{(n)}}(u^*Xu)u^*
\]
for all $X\in\mn{n}$.

\begin{lem}\label{lem:surj}
For any unitary $v \in \mn{n}$, the map
\[
X \mapsto E_{\Delta^{(n)}}(v^*Xv)
\]
is surjective onto $\Delta^{(n)}$.
\end{lem}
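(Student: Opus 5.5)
The plan is to exhibit an explicit preimage for each diagonal matrix. Fix $D=\operatorname{diag}(d_1,\dots,d_n)\in\Delta^{(n)}$ and set $X:=vDv^*$. Since $v$ is unitary, $v^*Xv=v^*vDv^*v=D$. Because $D$ is already diagonal, the conditional expectation $E_{\Delta^{(n)}}$ (which simply keeps the diagonal entries) fixes it, so
\[
E_{\Delta^{(n)}}(v^*Xv)=E_{\Delta^{(n)}}(D)=D .
\]
This already gives surjectivity onto $\Delta^{(n)}$.

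I will also record the complementary inclusion, so that the image is exactly $\Delta^{(n)}$: by definition $E_{\Delta^{(n)}}$ takes values in $\Delta^{(n)}$, so the map $X\mapsto E_{\Delta^{(n)}}(v^*Xv)$ lands in $\Delta^{(n)}$.

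It may also help to phrase the argument for later use in the computation of $E_u$ in \eqref{eq:unitary-conditional}. The map is the composition of the linear bijection $\operatorname{Ad}_{v^*}$ of $\mn{n}$ with the idempotent surjection $E_{\Delta^{(n)}}$ onto $\Delta^{(n)}$. A composition of a bijection followed by a surjection is surjective.

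There is no genuine obstacle here. The only point to check is that $E_{\Delta^{(n)}}$ restricts to the identity on $\Delta^{(n)}$, and this is immediate from the formula $E_{\Delta^{(n)}}(X)=\operatorname{diag}(X_{11},\dots,X_{nn})$.
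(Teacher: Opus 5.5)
Your proposal is correct: for $D\in\Delta^{(n)}$, the preimage $X=vDv^*$ gives $E_{\Delta^{(n)}}(v^*Xv)=E_{\Delta^{(n)}}(D)=D$, and this settles the lemma. The paper states this lemma without proof, and your explicit preimage is the evident argument it leaves to the reader. It is also exactly what is needed where the lemma is used, namely to let $d_1,\dots,d_n$ range freely in Proposition \ref{prop:exp-and-hada-uni}.
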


\begin{lem}\label{lem:exp-for}
    Let $W$ be an $n\times n$ unitary matrix. For any $D=\operatorname{diag}(d_1,\dots,d_n)\in\Delta^{(n)}$, the conditional expectation onto $\Delta^{(n)}$ is given by
\[
E_{\Delta^{(n)}}(WDW^*)
=
\operatorname{diag}\left(
d_1|W_{11}|^2+\cdots+d_n|W_{1n}|^2,\,
\dots,\,
d_1|W_{n1}|^2+\cdots+d_n|W_{nn}|^2
\right).
\]
\end{lem}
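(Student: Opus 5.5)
The proof is a direct entrywise computation of $WDW^*$ followed by reading off its diagonal. No earlier result of the paper is needed beyond the definition of $E_{\Delta^{(n)}}$ as the map that keeps the diagonal entries and kills the off-diagonal ones.

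First I write $D=\sum_{k=1}^n d_k E_{kk}$. By linearity,
\[
WDW^*=\sum_{k=1}^n d_k\, W E_{kk} W^* .
\]
Next I note that $WE_{kk}W^*=(We_k)(We_k)^*$, where $We_k$ is the $k$-th column $(W_{1k},\dots,W_{nk})^T$. So this matrix is the rank-one operator with $(i,j)$ entry $W_{ik}\overline{W_{jk}}$.

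Summing over $k$ gives
\[
(WDW^*)_{ij}=\sum_{k=1}^n d_k W_{ik}\overline{W_{jk}} .
\]
In particular, the $(i,i)$ entry is $\sum_{k} d_k |W_{ik}|^2$.

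Finally, I apply $E_{\Delta^{(n)}}(X)=\operatorname{diag}(X_{11},\dots,X_{nn})$. This yields exactly the stated diagonal matrix, whose $i$-th entry is $d_1|W_{i1}|^2+\cdots+d_n|W_{in}|^2$.

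There is no real obstacle here. The only point requiring care is index bookkeeping: the $i$-th diagonal entry involves the $i$-th row of $W$, weighted by the entries of $D$. Unitarity of $W$ is not even needed for the formula. It only guarantees that each row of weights $|W_{ik}|^2$ sums to $1$, which is what makes the matrix $(|W_{ik}|^2)_{i,k}$ doubly stochastic in later applications.
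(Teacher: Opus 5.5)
Your entrywise computation is correct and matches the paper's approach. The paper states this lemma without a separate proof, and it carries out the same calculation, in the form $(W^*D_1W)_{kk}=\sum_i |W_{ik}|^2\mu_i$, in the proof of Proposition~\ref{prop:intersection_trivial}. One small point about your closing aside: unitarity makes both the rows and the columns of $(|W_{ik}|^2)_{i,k}$ sum to $1$, and it is the two together that make this matrix doubly stochastic.
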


We pause briefly to recall the notion of a \emph{commuting square} of subalgebras of a von Neumann algebra. Let $\sN\subseteq\sM$ be an inclusion of finite von Neumann algebras, and let $E_\sN:\sM\to\sN$ be a conditional expectation. Let $\sP,\sQ$ be intermediate subalgebras with corresponding conditional expectations $E_\sP:\sM\to\sP$ and $E_\sQ:\sM\to\sQ$, respectively. We say that the quadruple $(\sM,\sP,\sQ,\sN)$ forms a \emph{commuting square} if
\[
E_\sP\circ E_\sQ=E_\sN=E_\sQ\circ E_\sP.
\]

We will also use the following definition.

\begin{definition}[Orthogonal Masas, {\cite{Choda2008}}]
    Two masas $\sC,\sD$ of the von Neumann algebra $\mn{n}$ are said to be \emph{orthogonal in the sense of Popa} if $(\mn{n},\sC,\sD,\C)$ is a commuting square and
    \[
    \sC\cap\sD=\C.
    \]
\end{definition}

With this terminology in place, we prove the following result, which gives a precise characterization of the relationship between commuting squares and Hadamard unitaries.

\begin{prop}\label{prop:exp-and-hada-uni}
    Let $u, v \in \sU(\mn{n})$ be unitaries. Then $E_u \circ E_v = E_0$ if and only if $u^*v$ is a Hadamard unitary.
\end{prop}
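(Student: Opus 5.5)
We reduce to the case $u=\mathbb{I}_n$ and then compute both sides explicitly using Lemma~\ref{lem:exp-for}.

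\textbf{Step 1: reduction.} By definition \eqref{eq:unitary-conditional}, $E_u=\operatorname{Ad}_u\circ E_{\Delta^{(n)}}\circ\operatorname{Ad}_{u^*}$ and $E_v=\operatorname{Ad}_v\circ E_{\Delta^{(n)}}\circ\operatorname{Ad}_{v^*}$. Put $W:=u^*v$. Then
\[
E_u\circ E_v=\operatorname{Ad}_u\circ\big(E_{\Delta^{(n)}}\circ\operatorname{Ad}_{W}\circ E_{\Delta^{(n)}}\circ\operatorname{Ad}_{W^*}\big)\circ\operatorname{Ad}_{u^*}.
\]
The map $E_0$ commutes with every $\operatorname{Ad}$ of a unitary, since the trace is unitarily invariant. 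Hence $E_u\circ E_v=E_0$ if and only if $E_{\Delta^{(n)}}\circ E_W=E_0$, where $E_W=\operatorname{Ad}_W\circ E_{\Delta^{(n)}}\circ\operatorname{Ad}_{W^*}$.

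\textbf{Step 2: the composite.} For $X\in\mn{n}$, write $E_{\Delta^{(n)}}(W^*XW)=D=\operatorname{diag}(d_1,\dots,d_n)$. Lemma~\ref{lem:exp-for} gives
\[
E_{\Delta^{(n)}}(WDW^*)=\operatorname{diag}\Big(\textstyle\sum_j d_j|W_{1j}|^2,\dots,\sum_j d_j|W_{nj}|^2\Big).
\]
On the other side,
\[
E_0(X)=\tfrac1n\operatorname{Tr}(X)\,\mathbb{I}_n=\tfrac1n\operatorname{Tr}(W^*XW)\,\mathbb{I}_n=\tfrac1n\Big(\textstyle\sum_j d_j\Big)\mathbb{I}_n .
\]
By Lemma~\ref{lem:surj}, as $X$ ranges over $\mn{n}$, $D$ ranges over all of $\Delta^{(n)}$. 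So the identity $E_{\Delta^{(n)}}\circ E_W=E_0$ holds if and only if
\[
\sum_j d_j|W_{ij}|^2=\frac1n\sum_j d_j\quad\text{for all } i \text{ and all } (d_1,\dots,d_n)\in\C^n .
\]

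\textbf{Step 3: conclusion.} Taking $d=e_j$ shows this condition is equivalent to $|W_{ij}|^2=1/n$ for all $i,j$. Since $W$ is unitary, Proposition~\ref{prop:hadamard-uni} says this holds exactly when $W=u^*v$ is a Hadamard unitary.

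The other order, $E_v\circ E_u=E_0$, follows by the same argument with $W^*$ in place of $W$. This is consistent, since $W$ is Hadamard if and only if $W^*$ is.

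The only real subtlety is Step 2, namely making sure the quantifier over $X$ can be replaced by a quantifier over arbitrary diagonal $D$. Lemma~\ref{lem:surj} handles this, and it is also what makes the trace comparison exact.
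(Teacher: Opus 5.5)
Your proof is correct and follows essentially the same route as the paper. The paper sets $W=u^*v$ and removes the outer conjugation by conjugating the equation by $u^*$, where you use $\operatorname{Ad}$-invariance of $E_0$. Both then use Lemma~\ref{lem:exp-for} and Lemma~\ref{lem:surj} to reduce to the diagonal system $\sum_j d_j|W_{ij}|^2=\frac1n\sum_j d_j$ for arbitrary $d$, test it on $d=e_j$, and conclude with Proposition~\ref{prop:hadamard-uni}.
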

\begin{proof}
    Let $W = u^* v$. By definition, evaluating the composition on an arbitrary matrix $X$ yields:$$(E_u \circ E_v)(X) = u E_{\Delta^{(n)}}(W E_{\Delta^{(n)}}(v^* X v) W^*) u^*$$
    Let $D = E_{\Delta^{(n)}}(v^* X v)$, which is diagonal. Setting $(E_u \circ E_v)(X) = E_0(X)$ results in the equation:
    $$u E_{\Delta^{(n)}}(W D W^*) u^* = \frac{\text{Tr}(X)}{n} \otimes\mathbb{I}_n$$
    Conjugating both sides by $u^*$ leaves it unchanged:
    $$E_{\Delta^{(n)}}(W D W^*) = \frac{\text{Tr}(X)}{n}\otimes \mathbb{I}_n$$
    
    Note that $\text{Tr}(X) = \text{Tr}(v^* X v) = \text{Tr}(E_{\Delta^{(n)}}(v^* X v)) = \text{Tr}(D)$. Letting $D = \text{diag}(d_1,\dots, d_n)$, we have $\text{Tr}(X) = d_1 +\cdots + d_n$. Substituting this alongside Lemma \ref{lem:exp-for} transforms our equality into a purely diagonal system:
    
    $$\operatorname{diag}\left(d_1|W_{11}|^2 + \dots + d_n |W_{1n}|^2, \dots,  d_1|W_{n1}|^2 + \dots + d_n |W_{nn}|^2\right) = \frac{d_1 +\cdots+ d_n}{n} \otimes \mathbb{I}_n 
    $$

    ($\Rightarrow$):
If the composition equals $E_0$, this matrix equation must hold for all $X \in \mn{n}$, and thus (by Lemma \ref{lem:surj}) for any values of $d_1, \dots, d_n$.

For all $j$, setting $d_j = 1$ and $d_i= 0$ for $i \ne j$, forces $\vert{}W_{ij}\vert{}^2 = 1/n$ for all $i$. Thus $W$ is a Hadamard unitary by Proposition \ref{prop:hadamard-uni}.

($\Leftarrow$):
If $W = u^*v$ is a Hadamard unitary, then $\vert{}W_{ij}\vert{}^2 = 1/n$ for all $i,j$. Substituting this into the left side of our diagonal system gives:
$$ \frac{d_1 +\cdots +d_n}{n} \otimes \mathbb{I}_n= \frac{\text{Tr}(X)}{n} \otimes \mathbb{I}_n$$

Tracing our algebraic steps in reverse confirms that $(E_u \circ E_v)(X) = E_0(X)$ for all $X$.
\end{proof}

\begin{prop}\label{prop:intersection_trivial}
   Let $u,v\in \sU(\mn{n})$ be unitaries such that $u^*v$ is a Hadamard unitary. Then
\[
u\Delta^{(n)}u^*\cap v\Delta^{(n)}v^*=\mathbb{C}\mathbb{I}_n.
\]
\end{prop}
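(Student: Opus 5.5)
Conjugating by $u^*$ reduces everything to the diagonal masa. Since $\operatorname{Ad}_{u^*}$ is a $*$-automorphism of $\mn{n}$ fixing $\mathbb{C}\mathbb{I}_n$, we have
\[
u^*\left(u\Delta^{(n)}u^*\cap v\Delta^{(n)}v^*\right)u=\Delta^{(n)}\cap W\Delta^{(n)}W^*,\qquad W:=u^*v .
\]
So it suffices to show $\Delta^{(n)}\cap W\Delta^{(n)}W^*=\mathbb{C}\mathbb{I}_n$ whenever $W$ is a Hadamard unitary. The inclusion $\supseteq$ is trivial.

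For the reverse inclusion, take $X$ in the intersection. Because $X\in W\Delta^{(n)}W^*$, we can write $X=WDW^*$ with $D=\operatorname{diag}(d_1,\dots,d_n)$. Because $X\in\Delta^{(n)}$, we have $E_{\Delta^{(n)}}(X)=X$. Lemma \ref{lem:exp-for} together with $|W_{ij}|^2=1/n$ (Proposition \ref{prop:hadamard-uni}) gives
\[
X=E_{\Delta^{(n)}}(WDW^*)=\frac{d_1+\cdots+d_n}{n}\,\mathbb{I}_n ,
\]
so $X$ is a scalar multiple of the identity, as required.

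Alternatively, one can argue through Proposition \ref{prop:exp-and-hada-uni}. It gives $E_u\circ E_v=E_0$. For $X$ in the intersection, $E_v(X)=X$ and $E_u(X)=X$, so $X=E_u(E_v(X))=E_0(X)\in\mathbb{C}\mathbb{I}_n$. This route is even shorter, and either argument works.

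There is no real obstacle here. The only point needing care is the step $E_{\Delta^{(n)}}(X)=X$ for diagonal $X$ (respectively $E_u(X)=X$ for $X\in u\Delta^{(n)}u^*$), which holds because each is a conditional expectation onto the relevant subalgebra and hence fixes it pointwise.
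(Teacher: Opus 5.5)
Your first argument is correct and is essentially the paper's proof. The paper writes $X=uD_1u^*=vD_2v^*$, so $D_2=W^*D_1W$ with $W=u^*v$, and reads off $(D_2)_{kk}=\sum_i |W_{ik}|^2\mu_i=\frac{1}{n}\operatorname{Tr}(D_1)$; this is the same diagonal-entry computation you package via the reduction by $\operatorname{Ad}_{u^*}$ and Lemma~\ref{lem:exp-for}. Your alternative route through Proposition~\ref{prop:exp-and-hada-uni} is also valid and not circular, since that proposition is proved without using this one, and it shows that $E_u\circ E_v=E_0$ on its own already forces $u\Delta^{(n)}u^*\cap v\Delta^{(n)}v^*\subseteq\mathbb{C}\mathbb{I}_n$.
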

\begin{proof}
    Let $X \in u\Delta^{(n)} u^* \cap v\Delta^{(n)} v^*$. Therefore, there are diagonal matrices $D_1, D_2 \in \Delta^{(n)}$ such that$$X = u D_1 u^* = v D_2 v^*$$
    
    By multiplying on the left by $v^*$ and on the right by $v$, we can isolate $D_2$:
    $$D_2 = (v^* u) D_1 (u^* v)$$

    Let $W = u^* v$. By our hypothesis, $W$ is a Hadamard unitary matrix, which means $\vert W_{ij}\vert = 1/\sqrt{n}$ for all $i,j$. Since $W$ is unitary, its adjoint is $W^* = v^* u$. Therefore, we have:$$D_2 = W^* D_1 W$$

    Let the entries of the diagonal matrices be $D_2 = \operatorname{diag}(\lambda_1, \dots, \lambda_n)$ and $D_1 = \operatorname{diag}(\mu_1, \dots, \mu_n)$. Let us compute the $k$-th diagonal entry of $D_2$:
    $$(D_2)_{kk} = (W^* D_1 W)_{kk} = \sum_{i=1}^n \sum_{j=1}^n (W^*)_{ki} (D_1)_{ij} W_{jk}$$

    Because $D_1$ is a diagonal matrix, its off-diagonal entries are zero (meaning $(D_1)_{ij} = 0$ for $i \neq j$). This collapses the double sum into a single sum:$$\lambda_k = \sum_{i=1}^n (W^*)_{ki} (D_1)_{ii} W_{ik}$$

    By the definition of the adjoint matrix, $(W^*)_{ki} = \overline{W_{ik}}$. Substituting this and $(D_1)_{ii} = \mu_i$, we get:
    $$\lambda_k = \sum_{i=1}^n \overline{W_{ik}} \mu_i W_{ik} = \sum_{i=1}^n \vert W_{ik}\vert^2 \mu_i$$

    Therefore, $$\lambda_k = \sum_{i=1}^n \frac{1}{n} \mu_i = \frac{1}{n} \operatorname{Tr}(D_1)$$

    This means every single diagonal entry of $D_2$ is identical. Let this common value be $\lambda$. This forces the diagonal matrix $D_2$ to be a scalar multiple of the identity:
    $$D_2 = \begin{pmatrix} \lambda & & 0 \\ & \ddots & \\ 0 & & \lambda \end{pmatrix} = \lambda \mathbb{I}_n$$
    
    Finally, substituting this back into our original expression for $X$:
    $$X = v D_2 v^* = v (\lambda \mathbb{I}_n) v^* = \lambda (v v^*) = \lambda \mathbb{I}_n$$
    
    Therefore, $X \in \mathbb{C}\mathbb{I}_n$, which completes the proof.
\end{proof}

\begin{remark}
The converse of the last result does not hold in general. Let $u=\mathbb{I}_2$ and let $v$ be the standard rotation matrix through the angle $\pi/3$, given by
\[
v=
\begin{pmatrix}
\frac{1}{2} & -\frac{\sqrt{3}}{2}\\
\frac{\sqrt{3}}{2} & \frac{1}{2}
\end{pmatrix}.
\]
Then $u^*v=v$ is not a Hadamard unitary. Nevertheless, $\Delta\cap v\Delta v^*=\mathbb{C}\mathbb{I}_2.$
\end{remark}

We next establish the unitary invariance of the Pimsner--Popa probabilistic constant, which will be used in the subsequent theorem.

\begin{lem}\label{lem:popa-index-invariance}
For any two $n\times n$ unitary matrices $u,v\in\mn{n}$, we have
\[
\lambda(u\Delta^{(n)} u^*,v\Delta^{(n)} v^*)
=
\lambda(\Delta^{(n)},u^*v\Delta^{(n)} v^*u).
\]
\end{lem}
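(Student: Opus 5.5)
The plan is to show that the conditional expectations and positive cones transform compatibly under the inner automorphism $\operatorname{Ad}_{u^*}$. Two basic facts drive the argument.

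First, the trace-preserving conditional expectation onto $v\Delta^{(n)}v^*$ is exactly $E_v=\operatorname{Ad}_v\circ E_{\Delta^{(n)}}\circ\operatorname{Ad}_{v^*}$ from \eqref{eq:unitary-conditional}. It is idempotent, unital, positive, a bimodule map over $v\Delta^{(n)}v^*$, and preserves $\frac1n\operatorname{Tr}$. By uniqueness of the trace-preserving conditional expectation, it is the map $E_{v\Delta^{(n)}v^*}$ appearing in the definition of $\lambda$.

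Second, for any unitary $w$ and any unitary $v$ we have
\[
\operatorname{Ad}_{w}\circ E_{v}\circ \operatorname{Ad}_{w^*}=E_{wv}.
\]
This follows directly from \eqref{eq:unitary-conditional}, since $\operatorname{Ad}_w\operatorname{Ad}_v=\operatorname{Ad}_{wv}$.

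Now fix $t\ge 0$ and suppose $E_v(x)\ge tx$ for all $x\in (u\Delta^{(n)}u^*)_+$. Take any $y\in(\Delta^{(n)})_+$ and set $x=uyu^*$. Because $\operatorname{Ad}_u$ is a $*$-isomorphism of $\mn{n}$, $x$ lies in $(u\Delta^{(n)}u^*)_+$, and this correspondence $y\leftrightarrow x$ is a bijection between the two positive cones. Since $\operatorname{Ad}_{u^*}$ preserves order, we get
\[
E_{u^*v}(y)=u^*E_v(uyu^*)u=u^*E_v(x)u\ \ge\ t\,u^*xu=ty .
\]
Thus $t$ is admissible for the pair $(\Delta^{(n)},u^*v\Delta^{(n)}v^*u)$, where we use $E_{u^*v\Delta^{(n)}v^*u}=E_{u^*v}$. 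The reverse implication is obtained the same way by conjugating with $\operatorname{Ad}_u$. Hence the two sets of admissible $t$ coincide, and so do their suprema.

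The only point needing any care is the identification of the trace-preserving expectation onto $u^*v\Delta^{(n)}(u^*v)^*$ with $E_{u^*v}$, that is, the uniqueness of the $\tau$-preserving conditional expectation. Everything else is formal, because $*$-automorphisms preserve positivity, the order, and the trace.
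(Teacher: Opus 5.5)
Your proposal is correct and follows essentially the same route as the paper: both transport the admissibility condition on $t$ from $(u\Delta^{(n)}u^*)_+$ with $E_v$ to $(\Delta^{(n)})_+$ with $E_{u^*v}$ via the identity $\operatorname{Ad}_{u^*}\circ E_v\circ\operatorname{Ad}_u=E_{u^*v}$. The paper writes this as a chain of equalities of suprema, conjugating first by $v^*$ and then by $u^*v$, while you conjugate once by $u^*$; your explicit appeal to uniqueness of the trace-preserving conditional expectation makes precise a point the paper leaves implicit.
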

\begin{proof}
Let $E_u, E_v$ and $E_{u^*v}$ denote the conditional expectations from $\mn{n}$ onto $u\Delta^{(n)} u^*, v\Delta^{(n)} v^*$ and $u^*v \Delta^{(n)} v^*u$ respectively, for the unitaries $u,v$ and $u^*v$, as defined in equation \ref{eq:unitary-conditional}. Then 
\begin{align*}
   \lambda(u\Delta^{(n)} u^*, v\Delta^{(n)} v^*)
   &= \sup \left\{ t \geq 0 \,\middle|\,\,\, E_v(x) \geq tx
   \quad \forall\, x \in (u\Delta^{(n)} u^*)_{+} \right\} \\
   &= \sup \left\{ t \geq 0 \,\middle|\,\,\, vE_{\Delta^{(n)}}(v^*xv)v^* \geq tx
   \quad \forall\, x \in (u\Delta^{(n)}_{+} u^* \right\}\\
    &= \sup \left\{ t \geq 0 \,\middle|\,\,\, E_{\Delta^{(n)}}(v^*xv) \geq tv^*xv
   \quad \forall\, x \in (u\Delta^{(n)}_{+} u^* \right\}\\
    &= \sup \left\{ t \geq 0 \,\middle|\,\,\, E_{\Delta^{(n)}}(v^*uyu^*v) \geq tv^*uyu^*v
   \quad \forall\, y \in \Delta^{(n)}_{+} \right\}\\
   &= \sup \left\{ t \geq 0 \,\middle|\,\,\, u^*vE_{\Delta^{(n)}}(v^*uyu^*v)v^*u \geq ty
   \quad \forall\, y \in \Delta^{(n)}_{+} \right\}\\
   &= \sup \left\{ t \geq 0 \,\middle|\,\,\, E_{u^*v}(y) \geq ty
   \quad \forall\, y \in \Delta^{(n)}_{+} \right\}\\
   &=\lambda(\Delta^{(n)}, u^*v \Delta^{(n)} v^*u).
   \end{align*}
\end{proof}

\begin{definition}[Hamming Number]\label{def:hamming-num}
    Given a nonzero vector $\Vec{u} \in \C^n$, the Hamming number is given by 
    \[
    \|\Vec{u}\|_0:= \text{ number of nonzero entries in } \Vec{u}.
    \]
\end{definition}

\begin{theorem}\label{thm:equivalence}
   Let $u,v\in\sU(\mn{2})$ be two $2\times 2$ unitary matrices. Then the following are equivalent:
\begin{itemize}
    \item[(i)] $\dkk\left(u\Delta u^*,v\Delta v^*\right)=1$.
    
    \item[(ii)] $u^*v$ is a Hadamard unitary.
    
    \item[(iii)] $\alpha\left(u\Delta u^*,v\Delta v^*\right)=\frac{\pi}{2}$.
    
    \item[(iv)] The following diagram is a commuting square:
    \[
    \begin{array}{ccc}
    u\Delta u^* & \subset & \mathbb{M}_2(\mathbb{C}) \\
    \cup && \cup \\
    \mathbb{C} & \subset & v\Delta v^*
    \end{array}
    \]
    
    \item[(v)] $h(u\Delta u^* \mid v\Delta v^*)=\log(2)$.
    
    \item[(vi)] $\lambda(u\Delta u^*,v\Delta v^*)=\frac{1}{2}$.
\end{itemize}
\end{theorem}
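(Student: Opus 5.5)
The plan is to reduce every condition to a statement about the single relative unitary $W:=u^*v\in\sU(\mn{2})$, and then to route all equivalences through condition (ii).

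\emph{Reduction to $(\Delta, W\Delta W^*)$.} For the Kadison--Kastler distance, Theorem~\ref{prop:dkk-gen-masa} replaces $(u\Delta u^*,v\Delta v^*)$ by $(\Delta,W\Delta W^*)$, and Lemma~\ref{lem:popa-index-invariance} does the same for $\lambda$. For the angle and the entropy I would argue directly with the automorphism $\operatorname{Ad}_{u^*}$ of $\mn{2}$:
\begin{itemize}
\item it preserves $\operatorname{Tr}$, so it fixes $E_0$;
\item it intertwines $E_u$ with $E_{\Delta}$ and $E_v$ with $E_W$;
\item it therefore induces an isomorphism of the Watatani basic constructions carrying $e_\Delta$, $e_W$, $e_{\C}$ to the corresponding Jones projections, and these isomorphisms preserve the $\sA$-valued inner products;
\item it maps partitions of unity in $(u\Delta u^*)_+$ bijectively onto partitions of unity in $\Delta_+$, preserving $\tau\circ\eta$.
\end{itemize}
Hence $\alpha$ and $h$ are also unchanged by this reduction.

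\emph{The easy equivalences.} With $W$ in place, the equivalences are as follows.
\begin{itemize}
\item (i)$\Leftrightarrow$(ii) is Theorem~\ref{thm:hadamard-uni}, or Corollary~\ref{cor:d_KK_between_masa} combined with Theorem~\ref{thm:dkk-r-char} at $r=1$.
\item (ii)$\Leftrightarrow$(iii) follows from Proposition~\ref{prop:gupta-modified}. There $\cos\alpha=\sqrt{1-(2|W_{11}W_{12}|)^2}$ vanishes iff $2|W_{11}||W_{12}|=1$. Since $|W_{11}|^2+|W_{12}|^2=1$, this happens iff $|W_{11}|=|W_{12}|=1/\sqrt2$, i.e.\ iff $W$ is Hadamard.
\item (ii)$\Leftrightarrow$(iv) comes from Proposition~\ref{prop:exp-and-hada-uni}, which gives $E_u\circ E_v=E_0$ iff $W$ is Hadamard. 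Since $W^*=v^*u$ is Hadamard iff $W$ is, the same proposition also gives $E_v\circ E_u=E_0$. Note that $E_0$ is the trace-preserving expectation onto $\C$.
\end{itemize}

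\emph{Entropy.} For (ii)$\Leftrightarrow$(v) I would take the partition $\gamma=\{E_{11},E_{22}\}$. Since $\tau\circ\eta$ vanishes on projections, $h_\gamma$ reduces to $\sum_i\tau\circ\eta(E_W(E_{ii}))$. Here $E_W(E_{ii})$ has eigenvalues $|W_{i1}|^2,|W_{i2}|^2$, so $h_\gamma$ is the Shannon entropy $H(|W_{11}|^2,|W_{12}|^2)$, which equals $\log 2$ iff $W$ is Hadamard. This gives (ii)$\Rightarrow$(v) once it is combined with the a priori bound $h(\Delta\mid W\Delta W^*)\le\log 2$. I would derive that bound from concavity of $\eta$ and the fact that $\Delta$ is two-dimensional, or cite Choda's bound $h\le\log n$ for masas. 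For the converse, I would show that for a general partition $h_\gamma$ is bounded by the row entropy $H(|W_{11}|^2,|W_{12}|^2)$, which is $<\log2$ unless $W$ is Hadamard. Heuristically, refining toward minimal projections is optimal, and the error terms $\tau\eta(x_j)$ only decrease $h_\gamma$. I expect this to need Choda's explicit formula for masas.

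\emph{Probabilistic index: the main obstacle.} For (vi) I would compute $\lambda(\Delta,W\Delta W^*)$ explicitly. The condition $E_W(x)\ge tx$ is additive and homogeneous, so it suffices to test it on $x=E_{ii}$. The expectation $E_W(E_{ii})=\sum_j|W_{ij}|^2 f_jf_j^*$, with $f_j=We_j$, is positive. For $A\ge0$ one has $A\ge t\,\xi\xi^*$ iff $t\le 1/\langle A^{-1}\xi,\xi\rangle$, using the pseudo-inverse on the support of $A$. Applying this gives $\langle A^{-1}e_i,e_i\rangle=\sum_j|W_{ij}|^2/|W_{ij}|^2$, which is the Hamming number (Definition~\ref{def:hamming-num}) of the $i$-th row. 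Hence $\lambda=1/\max_i\|W_{i\cdot}\|_0$. I expect this to be the delicate step. Taken literally with this definition, the formula gives $\lambda=1/2$ as soon as $W$ has no zero entry, not only in the Hadamard case. So I would have to re-examine which normalisation or convention for $\lambda$ the equivalence requires, for instance the constant attached to the commuting square or the $\tau$-normalised index. I would adjust the argument accordingly before closing the loop (vi)$\Leftrightarrow$(ii).
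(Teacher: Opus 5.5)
Your handling of (i)--(v) is essentially sound, and the obstacle you hit at (vi) is not a defect of your argument. With the paper's definition of $\lambda$, condition (vi) is not equivalent to the others, and the paper's own proof of (vi)$\Rightarrow$(i) breaks at exactly that point.

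Your formula $\lambda(\Delta,W\Delta W^*)=1/\max_i\|W_{i\cdot}\|_0$ is correct. It coincides with the Bakshi--Guin formula the paper uses, since the columns of $v^*u=W^*$ are the conjugated rows of $W$. A concrete witness is $u=\mathbb{I}_2$, $v=W=r_{\pi/6}$:
\[
E_W(E_{11})-tE_{11}=\begin{pmatrix}\frac58-t&\frac{\sqrt3}{8}\\ \frac{\sqrt3}{8}&\frac38\end{pmatrix}\geq 0\iff t\leq\tfrac12 ,
\]
and the same holds for $E_{22}$, so $\lambda(\Delta,W\Delta W^*)=\frac12$. On the other hand, $\dkk(\Delta,W\Delta W^*)=2\cdot\frac{\sqrt3}{2}\cdot\frac12=\frac{\sqrt3}{2}<1$, $\cos\alpha(\Delta,W\Delta W^*)=\frac12$, and $W$ is not Hadamard.

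The paper goes wrong when it passes from ``all entries of $v^*u$ are nonzero'' to ``$(\Delta,u^*v\Delta v^*u)$ is orthogonal in the sense of Popa''. Nonvanishing entries do not force equal moduli. So do not look for a different normalisation. For pairs of masas in $\mn{2}$, this $\lambda$ takes only two values:
\begin{itemize}
\item $1$, when $u^*v$ is diagonal or anti-diagonal, i.e.\ when $u\Delta u^*=v\Delta v^*$;
\item $\frac12$, otherwise.
\end{itemize}
It therefore cannot detect the Hadamard condition. What can be proved, and what your computation gives, is that (i)--(v) are equivalent and imply (vi). The converse (vi)$\Rightarrow$(ii) fails.

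For the other items, your route differs mildly from the paper's.
\begin{itemize}
\item You get (iii) from Proposition~\ref{prop:gupta-modified} after reducing by $\operatorname{Ad}_{u^*}$. The paper instead cites Bakshi--Gupta for (iii)$\Leftrightarrow$(iv). Both work.
\item For (v), the paper invokes Choda's Corollary~3.3 ($h=\log n$ iff the pair is orthogonal), checking orthogonality via Propositions~\ref{prop:exp-and-hada-uni} and~\ref{prop:intersection_trivial}. Your converse for (v) is only heuristic as written. You can either cite the same corollary or close it directly.
\end{itemize}

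To close it directly, put $p=|W_{11}|^2$, $x_j=\operatorname{diag}(a_j,b_j)$, $m_j=a_j+b_j$, $s_j=a_j/m_j$ and $y_j=s_jp+(1-s_j)(1-p)$; terms with $m_j=0$ vanish. Then $E_W(x_j)$ has eigenvalues $m_jy_j$ and $m_j(1-y_j)$. Using $\eta(mt)=m\eta(t)+t\eta(m)$, and writing $H$ for Shannon entropy, you get
\[
h_\gamma=\tfrac12\sum_j m_j\bigl[H(y_j,1-y_j)-H(s_j,1-s_j)\bigr]\leq \tfrac12\sum_j m_j\,H(p,1-p)=H(p,1-p).
\]
The inequality holds because $(y_j,1-y_j)$ is the law of $X\oplus N$ for independent $X\sim(s_j,1-s_j)$ and $N\sim(p,1-p)$. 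Your partition $\{E_{11},E_{22}\}$ attains this bound, so $h(\Delta\mid W\Delta W^*)=H(p,1-p)$. This also supplies the upper bound $\log 2$ you needed for (ii)$\Rightarrow$(v).
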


\begin{proof}
 This is known that $(iii) \iff (iv)$ (see, for example \cite[Remark 5.4]{Bakshi_gupta_2021}).

 For $(i) \iff (ii)$ note the following equation together with Proposition \ref{prop:dkk-gen-masa}:
 \[
 \dkk\left( u\Delta u^*, v \Delta v^*\right) = \dkk\left( \Delta, u^*v \Delta v^*u\right). 
 \]
 $(ii)\iff (iv)$ follows from Proposition \ref{prop:exp-and-hada-uni}.

 $(ii) \implies (v)$. Assume $(ii)$, then we have proved that $(iv)$ holds. On the other hand $(ii)$ implies (by Proposition \ref{prop:intersection_trivial}) that $u\Delta u^* \cap v\Delta v^* = \C.$ Now by \cite[Corollary 3.3]{Choda2008}, we get (v).

 $(v)\implies (ii)$. By \cite[Corollary 3.3]{Choda2008}, $(v)$ implies $(iv)$ and which we have already proved implies $(ii)$.

 \noindent($(i) \implies (vi)$). Let $\dkk(u\Delta u^*, v\Delta v^*) = 1$. Since we've already shown that this implies $(ii)$, that is, $v^*u$ is a Hadamard unitary. By Lemma \ref{lem:popa-index-invariance} and \cite[Theorem 4.2]{BakshiGuin2025}, we get
 \[
 \lambda(u\Delta u^*, v \Delta v^*) = \lambda(\Delta, u^*v \Delta v^*u) = \min_{1\le i \le 2} \left( \|(v^*u)_i\|_0 \right)^{-1}= \frac{1}{2}.
 \]

 \noindent($(vi) \implies (i)$). Suppose $\lambda(u\Delta u^*, v\Delta v^*) = \frac{1}{2}$. Then by Lemma \ref{lem:popa-index-invariance} and \cite[Theorem 4.2]{BakshiGuin2025}, we have
 \[
 \min_{1\le i \le 2} \left( \|(v^*u)_i\|_0 \right)^{-1}=\lambda(\Delta, u^*v \Delta v^*u)=\lambda(u\Delta u^*, v\Delta v^*)=\frac{1}{2}.
 \]
 Note that since $v^*u$ is a $2 \times 2$ unitary matrix so the Hamming number of each column is equal and non-zero, that is, $\|(v^*u)_1\|_0 = \|(v^*u)_2\|_0=: g \ne 0$. Therefore $g = 2$ which means all the entries of $v^*u$ and hence those of $u^*v$ are non-zero. Then by \cite[Corollary 4.3]{BakshiGuin2025}, the pair $(\Delta, u^*v\Delta v^*u)$ is orthogonal in sense of Popa, that is, we have the following commuting square:
\begin{align*}
\begin{array}{ccc}
\Delta & \subset & \mathbb{M}_2(\mathbb{C}) \\
\cup && \cup \\
\mathbb{C} & \subset & u^*v\Delta v^*u
\end{array}
\end{align*}
 But we have already shown that $(iv)$ implies $(ii)$. So applying `$(iv) \implies (ii)$' on this commuting square we get $(ii)$ which in turn implies $(i)$ as we have proved already. 
 
 This completes the proof!
\end{proof}

\begin{cor}\label{cor:equival}
Let $u \in \mn{2}$ be an arbitrary unitary matrix. Then the following are equivalent:
\begin{itemize}
    \item[(i)] $\dkk\left(\Delta,u\Delta u^*\right)=1$.
    \item[(ii)] $\lambda\left(\Delta,u\Delta u^*\right)=\frac{1}{2}$.
    \item[(iii)] $h\left(\Delta\mid u\Delta u^*\right)=\log(2)$.
    \item[(iv)] $\alpha\left(\Delta,u\Delta u^*\right)=\frac{\pi}{2}$.
\end{itemize}
\end{cor}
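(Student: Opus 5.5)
This is the special case $u'=\mathbb{I}_2$, $v'=u$ of Theorem \ref{thm:equivalence}, so my plan is to specialise that theorem and check that each quantity matches its counterpart there. With $u'=\mathbb{I}_2$ we have $u'\Delta u'^*=\Delta$ and $v'\Delta v'^*=u\Delta u^*$. The relative unitary is then $u'^*v'=u$.

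Under this substitution the conditions line up as follows:
\begin{itemize}
\item item (i) of the corollary is item (i) of Theorem \ref{thm:equivalence};
\item item (iv) of the corollary is item (iii) there;
\item item (iii) of the corollary is item (v) there;
\item item (ii) of the corollary is item (vi) there.
\end{itemize}
Since Theorem \ref{thm:equivalence} states all of its conditions are equivalent for arbitrary $u',v'\in\sU(\mn{2})$, the four conditions of the corollary are pairwise equivalent. Each is also equivalent to $u$ being a Hadamard unitary, which is condition (ii) of the theorem.

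The only point I would double-check is that the order of the arguments in $\lambda(\cdot,\cdot)$, $h(\cdot\mid\cdot)$ and $\alpha(\cdot,\cdot)$ agrees with the theorem. The theorem uses $\lambda(u'\Delta u'^*,v'\Delta v'^*)$ and $h(u'\Delta u'^*\mid v'\Delta v'^*)$, which become exactly $\lambda(\Delta,u\Delta u^*)$ and $h(\Delta\mid u\Delta u^*)$, so no symmetry property of these quantities is needed. For $\dkk$ and $\alpha$ the order is irrelevant anyway, since both are symmetric by definition.

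I expect no real obstacle, since this is just a specialisation of Theorem \ref{thm:equivalence}. If a more self-contained argument were wanted, I would route everything through Hadamard-ness of $u$. Theorem \ref{thm:hadamard-uni} gives (i) $\iff$ $u$ Hadamard. Corollary \ref{cor:relation_angle_distance} gives (i) $\iff$ (iv). Proposition \ref{prop:intersection_trivial} and \cite[Corollary 3.3]{Choda2008} give the entropy condition (iii). \cite[Theorem 4.2]{BakshiGuin2025} together with the Hamming-number argument gives (ii).
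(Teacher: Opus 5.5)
Your reduction is how the paper obtains this corollary. It is stated without separate proof right after Theorem~\ref{thm:equivalence}, as the specialisation $u\mapsto\mathbb{I}_2$, $v\mapsto u$, and your matching of items and argument orders is correct. The problem is that the statement fails at item (ii), so no argument can prove it as stated. You are inheriting the false implication $(vi)\Rightarrow(i)$ of Theorem~\ref{thm:equivalence}. The quantity $\lambda(\Delta,u\Delta u^*)$ depends only on the zero pattern of $u$, while $\dkk(\Delta,u\Delta u^*)$ does not.

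Here is a direct counterexample. Take $u=r_\theta$ with $c=\cos\theta$, $s=\sin\theta$ and $cs\neq0$, and let $p_k$ be the projection onto the $k$-th column of $u$. Then $E_u(E_{11})=c^2p_1+s^2p_2$, so
\[
E_u(E_{11})-tE_{11}=\begin{pmatrix} c^4+s^4-t & cs(c^2-s^2)\\ cs(c^2-s^2) & 2c^2s^2\end{pmatrix}.
\]
Its determinant is $c^2s^2(1-2t)$, and $c^4+s^4\ge\frac12$, so this matrix is positive semidefinite if and only if $t\le\frac12$. The computation for $E_{22}$ gives the same condition. Since $\Delta_+$ consists of nonnegative combinations of $E_{11}$ and $E_{22}$, we get $\lambda(\Delta,r_\theta\Delta r_\theta^*)=\frac12$ whenever $\sin2\theta\neq0$.

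On the other hand, Theorem~\ref{thm:dkk-2-case} gives $\dkk(\Delta,r_\theta\Delta r_\theta^*)=|\sin2\theta|$. For $\theta=\pi/3$ (the unitary in the paper's own remark after Proposition~\ref{prop:intersection_trivial}) you get $\lambda=\frac12$, $\dkk=\frac{\sqrt3}{2}$ and $\alpha=\frac{\pi}{3}$. So (ii) holds while (i) and (iv) fail.

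In the paper's proof of $(vi)\Rightarrow(i)$, the faulty step is ``all entries of $u^*v$ are non-zero, hence the pair is orthogonal in the sense of Popa.'' By Proposition~\ref{prop:exp-and-hada-uni}, the commuting square requires $|w_{ij}|^2=\frac12$, not merely $w_{ij}\neq0$. Your self-contained fallback has the same hole: the Hamming-number computation only shows that all entries of $u$ are nonzero, which gives $(i)\Rightarrow(ii)$ but not the converse.

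What is actually true is the following. Conditions (i), (iii) and (iv) are equivalent, and each is equivalent to $u$ being Hadamard. Condition (i) implies (ii). For $2\times2$ unitaries, (ii) is equivalent to $\dkk(\Delta,u\Delta u^*)>0$, i.e.\ to $\Delta\neq u\Delta u^*$.
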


The following result is known and follows from Choda's work but we're keeping it here for the sake of completeness.
\begin{theorem}
    Let $u,v$ be two $n\times n$ unitaries such that $u^*v$ is Hadamard. Then
    \[
        H(u\Delta^{(n)}u^* \mid v\Delta^{(n)}v^*)=\log(n).
    \]
\end{theorem}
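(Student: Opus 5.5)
The plan is to reduce to the pair $(\Delta^{(n)}, W\Delta^{(n)}W^*)$ with $W=u^*v$ Hadamard, and then invoke Choda's characterization of the maximal value of the Connes--St\o rmer relative entropy. Here $H$ is read as the modified relative entropy $h(\,\cdot\mid\cdot\,)$ of the preliminaries.

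\textbf{Step 1: hypotheses of Choda's result.} The two masas $u\Delta^{(n)}u^*$ and $v\Delta^{(n)}v^*$ are orthogonal in the sense of Popa. Indeed, Proposition \ref{prop:exp-and-hada-uni}, applied in both orders, gives $E_u\circ E_v=E_0$; since $v^*u=W^*$ is also Hadamard, it gives $E_v\circ E_u=E_0$ as well. So $(\mn{n},u\Delta^{(n)}u^*,v\Delta^{(n)}v^*,\C)$ is a commuting square. Proposition \ref{prop:intersection_trivial} gives $u\Delta^{(n)}u^*\cap v\Delta^{(n)}v^*=\C\mathbb{I}_n$.

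\textbf{Step 2: conclude via Choda.} By \cite[Corollary 3.3]{Choda2008}, for two masas of $\mn{n}$, orthogonality in the sense of Popa is equivalent to $h=\log n$. This is the same citation used for $n=2$ in Theorem \ref{thm:equivalence}. Its general-$n$ form gives the statement directly.

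\textbf{Fallback: direct computation.} In case one wants to avoid quoting the $n$-dimensional version, I would compute $h$ directly after conjugating by $u^*$; this does not change $h$, since $\tau$ is unitarily invariant and $E_{u^*\,\cdot\,u}=\operatorname{Ad}_{u^*}\circ E\circ \operatorname{Ad}_u$.

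For the upper bound, take any finite partition $\{x_j\}\subset\Delta^{(n)}_+$ of $1$. By Lemma \ref{lem:exp-for}, the expectation onto $W\Delta^{(n)}W^*$ satisfies $E(x_j)=\tau(x_j)\mathbb{I}_n$, because all $|W_{ik}|^2=1/n$. Hence
\[
h_\gamma=\sum_j\bigl(\eta(\tau(x_j))-\tau(\eta(x_j))\bigr).
\]
Write $x_j=\operatorname{diag}(x_j^{(1)},\dots,x_j^{(n)})$ and $p_j=\tau(x_j)$. Then $h_\gamma$ is a Jensen-type gap, which I would bound by $\log n$ using concavity of $\eta$ together with the constraint $\sum_j x_j^{(i)}=1$. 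Concretely,
\[
\sum_j \eta(p_j)-\frac1n\sum_{i,j}\eta\bigl(x_j^{(i)}\bigr)
\]
is a mutual-information-type quantity $I(J;I)\le H(I)=\log n$, where $I$ is uniform on $n$ points.

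For the lower bound, take the minimal projections $x_j=E_{jj}$, $j=1,\dots,n$. Then $\eta(x_j)=0$ and $\eta(\tau(x_j))=\tfrac1n\log n$, so the sum is exactly $\log n$.

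\textbf{Main obstacle.} The only delicate point is the upper bound in the fallback route, namely recognizing $h_\gamma$ as a mutual information $H(J)-H(J\mid I)$ with $I$ uniform on $n$ points, which is then bounded by $H(I)=\log n$. If the citation route is accepted, this step is unnecessary.
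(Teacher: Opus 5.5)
You have read $H$ as $h$ in your opening sentence, and that is where the proof has a gap. Your Steps 1--2 reproduce the first half of the paper's proof, and your fallback correctly computes $h(u\Delta^{(n)}u^*\mid v\Delta^{(n)}v^*)=\log n$ on its own. But the $H$ in the statement is the original Connes--St\o rmer relative entropy
\[
H(\sP\mid\sQ)=\sup_{\gamma}\sum_j\bigl(\tau\circ\eta(E_\sQ(x_j))-\tau\circ\eta(E_\sP(x_j))\bigr),
\]
where $\gamma=\{x_j\}$ ranges over partitions of unity in all of $\mn{n}_+$, not only in $\sP_+$. The paper proves $h=\log n$ exactly as you do. It then uses a separate result, \cite[Corollary 2.3]{Choda2011}, to get $H=h$.

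From $h=\log n$ alone you only get $H\ge\log n$. This holds because partitions in $\sP_+$ are admissible for $H$ and satisfy $E_\sP(x_j)=x_j$. The reverse inequality $H\le\log n$ is missing from your argument.

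You can close the gap in either of two ways:
\begin{itemize}
\item Quote Choda's Corollary 2.3, as the paper does.
\item Run your fallback on arbitrary partitions. After conjugating by $u^*$ (so $\sP=\Delta^{(n)}$ and $\sQ=W\Delta^{(n)}W^*$), take $\{x_j\}\subset\mn{n}_+$. Put $y_j=E_{\Delta^{(n)}}(x_j)$ and $z_j=E_{\Delta^{(n)}}(W^*x_jW)$. Then $\tau\circ\eta(E_\sP(x_j))=\tau\circ\eta(y_j)$ and $\tau\circ\eta(E_\sQ(x_j))=\tau\circ\eta(z_j)$. The arrays $\frac1n y_j^{(i)}$ and $\frac1n z_j^{(k)}$ are joint distributions with uniform first marginal and the same second marginal $p_j=\tau(x_j)$. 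In Shannon notation the $\gamma$-term is therefore $H(J\mid K)-H(J\mid I)\le H(J)-H(J\mid I)=I(I;J)\le\log n$.
\end{itemize}
A third option is $H(\sP\mid\sQ)\le H(\mn{n}\mid\sQ)\le\log\lambda(\mn{n},\sQ)^{-1}=\log n$, using monotonicity and the Pimsner--Popa bound.

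This upper bound holds for every pair of masas. The Hadamard hypothesis is needed only for the lower bound.
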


\begin{proof}
    Since $u^*v$ is Hadamard, Propositions \ref{prop:exp-and-hada-uni} and
    \ref{prop:intersection_trivial} imply that
    $(u\Delta^{(n)}u^*,v\Delta^{(n)}v^*)$ is an orthogonal pair in the sense of Popa.
    Hence, by \cite[Corollary 3.3]{Choda2008}, the modified Connes--St\o rmer
    entropy satisfies
    \[
        h(u\Delta^{(n)}u^* \mid v\Delta^{(n)}v^*)=\log(n).
    \]
    Finally, by \cite[Corollary 2.3]{Choda2011}, we have
    \[
        H(u\Delta^{(n)}u^* \mid v\Delta^{(n)}v^*)
        =
        h(u\Delta^{(n)}u^* \mid v\Delta^{(n)}v^*),
    \]
    and the result follows.
\end{proof}

    \section{Interior Angle Between MASAs in $\mn{n}$}
    \label{sec:mn}
    The main goal of this section is to derive an explicit formula for the interior angle between two masas in the matrix algebra $\mn{n}$ for arbitrary $n\geq 2$. Since the computation of the interior angle involves the basic constructions associated with the masas and their corresponding Jones projections, we begin by describing these objects explicitly.

In particular, we are interested in computing
\[
\alpha\left(u\Delta^{(n)}u^*,v\Delta^{(n)}v^*\right),
\]
where $u,v\in\mn{n}$ are unitary matrices. To this end, we need an explicit description of the Jones projection, denoted by $e^u$, associated with the inclusion $u\Delta^{(n)}u^*\subseteq\mn{n}$. This was established in Proposition \ref{prop:jones-proj}. Moreover, for masas of this form, their corresponding Jones projections are unitarily equivalent in the algebra $\mn{n}\otimes\mn{n}$; see Corollary \ref{cor:equval-of-jones-proj}.

In Theorem \ref{thm:uni-inv-angle}, we established that the interior angle satisfies the same unitary invariance property as the Kadison--Kastler distance $\dkk$ (see Theorem \ref{prop:dkk-gen-masa}). As an application, Theorem \ref{thm:dkk-is-sin} establishes, in the $2\times2$ case, the following relationship between the Kadison--Kastler distance and the interior angle (compare Corollary \ref{cor:relation_angle_distance}):
\[
\mathrm{d}_{\textrm{KK}}(u\Delta u^*,v\Delta v^*)
=\sin\alpha(u\Delta u^*,v\Delta v^*).
\]
Furthermore, in Theorem \ref{thm:angle-hadamard-comm-sq}, we showed that the interior angle is maximal if and only if $u^*v$ is a Hadamard unitary. Combining these results, we obtain in Theorem \ref{thm:angle-betwn-masas} an explicit formula for the interior angle between two masas in $\mn{n}$.

Finally, this formula allows us to show that, for arbitrary $n\geq2$, the interior angle between two masas can attain every value in $[0,\pi/2]$. This generalizes the result of \cite{gupta2024}, where the corresponding statement was established for the case $n=2$.

Let $\mn{n}$ be the algebra of $n \times n$ complex matrices. Equip $\mn{n}$ with the inner product $\langle X, Y \rangle_\tau := \tau(X^*Y)$, where $\tau(Z) = \frac{1}{n} \operatorname{Tr}(Z)$, making it a finite-dimensional Hilbert space.

\begin{theorem}\label{thm:basic-c-ten-I}
    Consider the inclusion of $C^*$-algebras $\C \otimes \mathbb{I}_n\subseteq \mn{n}$, equipped with the trace-preserving conditional expectation
    \[
    E_0:= \frac{1}{n}\operatorname{Tr}\otimes \mathbb{I}_n.
    \]
    Then the following statements hold:
    \begin{itemize}
    \item[(i)] $E_0$ is of finite Watatani index with quasi-basis $\left\{u_{ij} := \sqrt{n}E_{ij}: i,j = 1, \dots, n\right\}$. Moreover,
    $$\operatorname{Ind}(E_0)^{-1} = \frac{1}{n^2} \mathbb{I}_n.$$
        \item[(ii)] The map $\Phi: \mn{n}\otimes \mn{n}\to \sB\left( \mn{n}, \langle -, - \rangle_\tau\right)$, defined by
\[
\Phi(A\otimes B):= A(-)B^T,
\]
is a $C^*$-isomorphism. Moreover, the reduced $C^*$-basic construction associated with the inclusion $\C\otimes\mathbb{I}_n \subseteq^{E_0} \mn{n}$ is given by $\Phi(\mn{n}\otimes \mn{n})$.

        \item[(iii)] Let $e_1$ denote the Jones projection in the basic construction. Then
\[
e_1
=
\Phi \left( \frac{1}{n}\sum_{i=1}^n\sum_{j=1}^n E_{ij}\otimes E_{ij} \right),
\]
where $\{E_{ij}\}$ denotes the standard system of matrix units in $\mn{n}$.

        \item[(iv)] The dual conditional expectation $\Tilde{E}$ is given by
        \[
        \Tilde{E}=\Phi\left( \operatorname{id}\otimes \frac{1}{n}\operatorname{Tr} \right)\Phi^{-1},
        \]
        and, in particular,
        \begin{align*}
            A\otimes B
            &\mapsto A\otimes \frac{1}{n}\operatorname{Tr}(B)
            \in \mn{n}\otimes \C=\mn{n}.
        \end{align*}
    \end{itemize}
\end{theorem}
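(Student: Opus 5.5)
We verify the four items in order, working throughout in the finite-dimensional Hilbert space $(\mn{n},\langle-,-\rangle_\tau)$. For $\C\otimes\mathbb{I}_n$ the $\sB$-valued inner product is $E_0(X^*Y)=\tau(X^*Y)\mathbb{I}_n$, which is exactly $\langle X,Y\rangle_\tau$. Consequently the Hilbert $C^*$-module $\mathcal{E}$ is just this Hilbert space, and by Proposition \ref{prop:gupta-jones-proj}(v) we have $\sA_1=\sL_\sB(\sA)=\sB(\mn{n},\langle-,-\rangle_\tau)$.

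\textbf{Item (i).} We check the quasi-basis identity directly:
\[
\sum_{i,j}\sqrt{n}E_{ij}\,E_0\bigl(\sqrt{n}E_{ji}X\bigr)=n\sum_{i,j}E_{ij}\tfrac1n X_{ij}=X.
\]
The Watatani index is then $\sum_{i,j}u_{ij}u_{ij}^*=n\sum_{i,j}E_{ii}=n^2\mathbb{I}_n$, which gives the stated inverse.

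\textbf{Item (ii).} The map $\Phi$ is well defined and linear, since $(A,B)\mapsto A(-)B^T$ is bilinear. It is multiplicative because
\[
\Phi(A\otimes B)\Phi(C\otimes D)X=ACXD^TB^T=\Phi(AC\otimes BD)X.
\]
For the $*$-property we compute the adjoint with respect to $\langle-,-\rangle_\tau$:
\[
\tau\bigl((AXB^T)^*Y\bigr)=\tau\bigl(X^*A^*Y\bar B\bigr),
\]
so that $\Phi(A\otimes B)^*=A^*(-)\bar B=\Phi(A^*\otimes B^*)$, because $(B^*)^T=\bar B$. Under the vec identification, $\Phi(A\otimes B)$ is $B\otimes_{\mathrm{std}}A$ acting on $\C^{n^2}$, up to the paper's Kronecker convention. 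Hence $\Phi$ is injective. Both sides have dimension $n^4$, so $\Phi$ is a $*$-isomorphism onto $\sB(\mn{n})$, and this equals the basic construction by the observation above.

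\textbf{Item (iii).} The Jones projection is $e_1(X)=E_0(X)=\tau(X)\mathbb{I}_n$. We compute
\[
\Phi\Bigl(\tfrac1n\sum_{i,j}E_{ij}\otimes E_{ij}\Bigr)X=\tfrac1n\sum_{i,j}E_{ij}XE_{ji}.
\]
Since $E_{ij}XE_{ji}=X_{jj}E_{ii}$, this equals $\tfrac1n\operatorname{Tr}(X)\mathbb{I}_n$, as required.

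\textbf{Item (iv).} The map $F:=\Phi(\operatorname{id}\otimes\tfrac1n\operatorname{Tr})\Phi^{-1}$ is a conditional expectation onto $\Phi(\mn{n}\otimes\mathbb{I}_n)$, which is the image of $\lambda(\mn{n})$ because $\Phi(A\otimes\mathbb{I})=A(-)$. By uniqueness in Proposition \ref{prop:gupta-jones-proj}(vi), where $\Tilde E$ is determined by $\Tilde E(xe_1y)=\operatorname{Ind}(E_0)^{-1}xy$ on the spanning set, it suffices to check this identity for $F$.

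Write $x=A\otimes\mathbb{I}$ and $y=C\otimes\mathbb{I}$. Then
\[
xe_1y=\tfrac1n\sum_{i,j}AE_{ij}C\otimes E_{ij},
\]
and applying $\operatorname{id}\otimes\tfrac1n\operatorname{Tr}$ gives
\[
\tfrac1{n^2}\sum_i AE_{ii}C=\tfrac1{n^2}AC.
\]
This matches $\operatorname{Ind}(E_0)^{-1}xy$, so $F=\Tilde E$.

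The main obstacle is keeping track of the transpose and Kronecker-ordering conventions in the adjoint computation of (ii). A second point of care is justifying that the basic construction identifies with $\sB(\mn{n})$, which we settle via Proposition \ref{prop:gupta-jones-proj}(v) together with the dimension count.
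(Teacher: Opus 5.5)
Your proposal is correct and follows essentially the same route as the paper. You check the quasi-basis identity directly for (i), use Proposition \ref{prop:gupta-jones-proj}(v) to identify the basic construction with $\sB(\mn{n},\langle -,-\rangle_\tau)$ for (ii), and verify $\Tilde{E}(xe_1y)=\operatorname{Ind}(E_0)^{-1}xy$ on the spanning set $\{xe_1y\}$ for (iv). The differences are cosmetic: you supply the multiplicativity, adjoint and dimension-count details that the paper calls ``easy to see''. In (iii) you verify $\tfrac1n\sum_{i,j}E_{ij}XE_{ji}=\tau(X)\mathbb{I}_n$ directly, whereas the paper computes the matrix $\tfrac1n\operatorname{vec}(\mathbb{I}_n)\operatorname{vec}(\mathbb{I}_n)^T$ in the column-stacking basis.
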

\begin{proof} Let $\tau:= \frac{1}{n} \operatorname{Tr}$. 

\noindent $(i)$. Indeed for any $X \in \mn{n}$
$$\sum_{i,j=1}^n u_{ij} E_0(u_{ij}^* X) = n \sum_{i,j=1}^n E_{ij} \tau(E_{ji} X)$$
Because the trace $\tau(E_{ji} X)$ simply extracts the normalized $(i, j)$-th entry of $X$, we have $\tau(E_{ji}X) = \frac{1}{n}X_{ij}$. Substituting this back
$$n \sum_{i,j=1}^n E_{ij} \left( \frac{1}{n} X_{ij} \right) = \sum_{i,j=1}^n X_{ij} E_{ij} = X.$$ 

Furthermore,
$$\text{Ind}(E_0) = \sum_{i,j=1}^n u_{ij} u_{ij}^* =n \sum_{i,j=1}^n E_{ij}E_{ji} = n \sum_{i=1}^n \left( \sum_{j=1}^n E_{ii} \right) = n \sum_{i=1}^n n E_{ii} = n^2 \sum_{i=1}^n E_{ii} = n^2 \mathbb{I}_n.$$

    \noindent $(ii)$. By Proposition \ref{prop:gupta-jones-proj}-(v), the $C^*$-basic construction is
    \[
    \sL_{\C \otimes \mathbb{I}_n} \left( (\mn{n}, \langle -,- \rangle_\tau) \right)
    \]
    which is equal to $\sB\left( \mn{n}, \langle -, - \rangle_\tau\right)$. It is easy to see that
\[
\mn{n} \otimes \mn{n}\cong^\Phi \sB\left( \mn{n}, \langle -, - \rangle_\tau\right).
\]

\noindent $(iii)$. By definition of the Jones' projection $e_1(X) = E_0(X)= \frac{1}{n}\mathrm{Tr}(X)\otimes \mathbb{I}_n$ for all $X \in \mn{n}$. Thus we get
    \[
    e_1(-) = \frac{1}{n} \operatorname{vec}(\mathbb{I}_n)^T \operatorname{vec}(-) \otimes \mathbb{I}_n 
    \]
    In the standard column-stacking vectorization, the basis $\{E_{kl}\}$ of $\mn{n}$ is ordered lexicographically by column index first:
    $$\mathcal{B}_{\text{col}} = \{E_{11}, E_{21}, \dots, E_{n1}, \, E_{12}, E_{22}, \dots, E_{n2}, \, \dots, \, E_{1n}, E_{2n}, \dots, E_{nn}\}.$$
    Under this ordered basis the matrix of $e_1$ is calculated as:
    $$
    [e_1]_{\mathcal{B}_{\text{col}}} = \frac{1}{n} \operatorname{vec}(\mathbb{I}_n) \operatorname{vec}(\mathbb{I}_n)^{T}= \frac{1}{n} \sum_{i=1}^n \sum_{j=1}^n E_{ij} \otimes E_{ij}
    $$

    \noindent $(iv)$. For any $X, Y \in \mn{n}$
    \begin{align*}
     \Tilde{E}(Xe_1Y) = \text{Ind}(E_0)^{-1}XY=\frac{1}{n^2} \mathbb{I}_n XY &= \frac{1}{n^2} X\mathbb{I}_{n}Y  \\
     &=  \frac{1}{n^2} X \left( \sum_{i=1}^n E_{ii} \right) Y\\
     &=  \frac{1}{n^2} \sum_{i,j=1}^n (X E_{ij} Y) \delta_{ij} \\
     &= \frac{1}{n} \sum_{i,j=1}^n (X E_{ij} Y) \cdot \left( \frac{1}{n}\text{Tr}(E_{ij}) \right)\\
     &= \Phi \circ \left( \text{id} \otimes \frac{1}{n}\text{Tr} \right) \left( \frac{1}{n} \sum_{i,j=1}^n (X E_{ij} Y) \otimes E_{ij} \right)\\
     &= \Phi \circ \left( \text{id} \otimes \frac{1}{n}\text{Tr} \right)\left((X \otimes \mathbb{I}_n) \left( \frac{1}{n} \sum_{i,j=1}^n E_{ij} \otimes E_{ij} \right) (Y \otimes \mathbb{I}_n)\right)\\
     &= \Phi \circ \left( \text{id} \otimes \frac{1}{n}\text{Tr} \right)\circ \Phi^{-1}(Xe_1Y)
    \end{align*}  
    This implies that $\Tilde{E}= \Phi \circ \left( \text{id} \otimes \frac{1}{n}\text{Tr} \right)\circ \Phi^{-1}.$
\end{proof}

\begin{remark}\label{rem:jones-proj-e-1}
    From now on, we suppress the isomorphism $\Phi$ from Theorem \ref{thm:basic-c-ten-I} and identify the basic construction associated with $\C\otimes \mathbb{I}_n \subseteq^{E_0} \mn{n}$ with $\mn{n}\otimes \mn{n}$. Under this identification, we regard the corresponding Jones projection as
    \[
   e_1= \frac{1}{n}\sum_{i,j=1}^n E_{ij}\otimes E_{ij}\in \mn{n}\otimes \mn{n},
    \]
    and the dual conditional expectation as
    \[
    \Tilde{E}=\text{id}\otimes \frac{1}{n}\text{Tr}=:E_n.
    \]
\end{remark}

In \cite{gupta2024}, the authors computed the Jones projection associated with the inclusion $u\Delta u^* \subseteq^{E_u} \mn{2}$. Using their computation, we derive an explicit formula for the interior angle between two masas induced by rotation matrices. We begin with the following lemma:

\begin{lem}\label{lem:jones-proj-r-theta}
   For an angle $\theta$, the Jones projection associated with the masa $r_\theta \Delta r_\theta^*$ is given by
\[
e^\theta :=
\begin{pmatrix}
a & b & b & c\\
b & c & c & -b\\
b & c & c & -b\\
c & -b & -b & a
\end{pmatrix},
\]
where
\[
a=1-\frac{1}{2}\sin^2(2\theta),\qquad
b=\frac{1}{4}\sin(4\theta),\qquad
c=\frac{1}{2}\sin^2(2\theta).
\]
\end{lem}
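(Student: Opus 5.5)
The plan is to compute $e^\theta$ directly from its defining property $e^\theta(X)=E_{r_\theta}(X)$ for $X\in\mn{2}$, where $E_{r_\theta}$ is the conditional expectation of \eqref{eq:unitary-conditional}. This is exactly how $e_1$ was obtained in Theorem \ref{thm:basic-c-ten-I}(iii). We then read off the matrix of this operator in the ordered basis $\mathcal{B}_{\text{col}}$, under the identification of Remark \ref{rem:jones-proj-e-1}.

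\textbf{Step 1: the diagonal masa.} First treat $u=\mathbb{I}_2$. The map $X\mapsto E_{\Delta}(X)=\operatorname{diag}(X_{11},X_{22})$ is the orthogonal projection (for $\langle-,-\rangle_\tau$) onto $\spn\{E_{11},E_{22}\}$. Since $\operatorname{vec}(E_{ii})=e_i\otimes e_i$, its matrix is
\[
\sum_{i}(e_i\otimes e_i)(e_i\otimes e_i)^T .
\]

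\textbf{Step 2: conjugation by $u$.} For a general unitary $u$, we have $E_u(X)=uE_\Delta(u^*Xu)u^*$. This is the orthogonal projection onto $u\Delta u^*=\spn\{u E_{ii}u^*\}$. The elements $uE_{ii}u^*=(ue_i)(ue_i)^*$ are orthogonal, each of $\tau$-norm $1/\sqrt2$, and the weight is absorbed exactly as for $e_1$ in Theorem \ref{thm:basic-c-ten-I}(iii). Hence the matrix of $e^u$ is
\[
\sum_i \operatorname{vec}\bigl((ue_i)(ue_i)^*\bigr)\operatorname{vec}\bigl((ue_i)(ue_i)^*\bigr)^*,
\qquad
\operatorname{vec}\bigl(xy^*\bigr)=\bar y\otimes x .
\]
For $u=r_\theta$, which is real, each of these vectors is simply $w_i:=ue_i\otimes ue_i$. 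Writing $c_\theta=\cos\theta$ and $s_\theta=\sin\theta$, we get
\[
w_1=(c_\theta^2,\;c_\theta s_\theta,\;c_\theta s_\theta,\;s_\theta^2)^T,
\qquad
w_2=(s_\theta^2,\;-c_\theta s_\theta,\;-c_\theta s_\theta,\;c_\theta^2)^T .
\]

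\textbf{Step 3: entrywise computation.} Now compute $e^\theta=w_1w_1^T+w_2w_2^T$ entry by entry using double-angle identities:
\begin{itemize}
\item the $(1,1)$ and $(4,4)$ entries are $c_\theta^4+s_\theta^4=1-2c_\theta^2s_\theta^2=1-\tfrac12\sin^2 2\theta=a$;
\item the $(1,2)$ entry is $c_\theta s_\theta(c_\theta^2-s_\theta^2)=\tfrac14\sin4\theta=b$;
\item the $(1,4)$ entry, and every entry of the middle $2\times2$ block, equals $2c_\theta^2s_\theta^2=c$;
\item the $(2,4)$ entry is $c_\theta s_\theta(s_\theta^2-c_\theta^2)=-b$.
\end{itemize}
The remaining entries follow from symmetry of $e^\theta$ and from $w_i$ having equal second and third coordinates.

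\textbf{Main obstacle.} The only delicate point is bookkeeping. The paper's Kronecker convention $A\otimes B=[B_{ij}A]$, the column-stacking $\operatorname{vec}$, and the normalisation of $\tau$ must all be used consistently. I would check this consistency by verifying that the same recipe with $\theta=0$ reproduces $\sum_i E_{ii}\otimes E_{ii}$. I would also cross-check against Theorem \ref{thm:basic-c-ten-I}(iii), which requires $e^\theta e_1=e_1$ (Proposition \ref{prop:gupta-jones-proj}(iii)). Since $r_\theta$ is real, no complex conjugates survive, so this check suffices.
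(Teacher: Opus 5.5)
Your computation is correct and yields the stated matrix, but it takes a different route from the paper. The paper does not compute $e^\theta$ at all; it imports the formula from \cite[Lemma 4.3-(3)]{gupta2024}.

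You instead compute the operator $X\mapsto E_{r_\theta}(X)$ directly in the column-stacking basis, which is exactly the identification fixed in Remark~\ref{rem:jones-proj-e-1}. Your argument is in fact the $u=r_\theta$ case of the paper's own later Proposition~\ref{prop:jones-proj}. For a real unitary, $p_k\otimes\overline{p_k}=p_k\otimes p_k=(x\otimes x)(x\otimes x)^T=w_kw_k^T$ with $x=r_\theta e_k$.

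The two routes buy different things. Yours verifies the formula inside the paper's own conventions, rather than relying on those of \cite{gupta2024} happening to match them. Your sanity checks are sound: $\theta=0$ gives $E_{11}\otimes E_{11}+E_{22}\otimes E_{22}$, and $e^\theta\operatorname{vec}(\mathbb{I}_2)=\operatorname{vec}(\mathbb{I}_2)$ because $a+c=1$, so $e^\theta e_1=e_1$. The citation buys brevity.

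Two small bookkeeping points.
\begin{itemize}
\item Under the paper's convention $A\otimes B=[B_{ij}A]$, the identity reads $\operatorname{vec}(xy^*)=x\otimes\bar y$ (compare $\operatorname{vec}(E_{ij})=e_i\otimes e_j$), not $\bar y\otimes x$. This is harmless here because $x=y=r_\theta e_i$ is real.
\item The normalisation step is cleaner if you write $E_u(X)=\sum_k p_kXp_k=\sum_k\operatorname{Tr}(p_kX)\,p_k$ and note $\operatorname{Tr}(p_kX)=\operatorname{vec}(p_k)^*\operatorname{vec}(X)$. Then no $\tau$-norms enter at all. The phrase ``absorbed exactly as for $e_1$'' is slightly misleading, since a factor $\frac1n$ survives for $e_1$ but none survives here.
\end{itemize}
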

\begin{proof}
This follows from \cite[Lemma 4.3-(3)]{gupta2024}.
\end{proof}

\begin{theorem}
    For any two angles $\theta,\varphi$, we have
\[
\cos\left(\alpha\left(r_\theta\Delta r_\theta^*,r_\varphi\Delta r_\varphi^*\right)\right)
=
\left|\cos(2\theta-2\varphi)\right|.
\]
\end{theorem}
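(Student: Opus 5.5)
Apply Definition~\ref{def:interior_angle} to the inclusion $\C\otimes\mathbb{I}_2\subseteq^{E_0}\mn{2}$, with the basic construction identified with $\mn{2}\otimes\mn{2}$ as in Remark~\ref{rem:jones-proj-e-1}. There $e_\sB=e_1=\frac12\sum_{i,j}E_{ij}\otimes E_{ij}$, and the $\sA$-valued inner product is $\langle x,y\rangle_{\sA}=E_2(x^*y)$, where $E_2=\mathrm{id}\otimes\frac12\operatorname{Tr}$ is the dual conditional expectation. Set $f_\theta:=e^\theta-e_1$ and $f_\varphi:=e^\varphi-e_1$, with $e^\theta,e^\varphi$ taken from Lemma~\ref{lem:jones-proj-r-theta}. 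Then
\[
\cos\alpha=\frac{\|E_2(f_\theta f_\varphi)\|}{\|E_2(f_\theta)\|^{1/2}\|E_2(f_\varphi)\|^{1/2}},
\]
where we use that each $f$ is a projection, since $e_1\le e^\theta$ by Proposition~\ref{prop:gupta-jones-proj}(iii), so $f^*f=f$.

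First write $e_1$ in the same ordered basis as Lemma~\ref{lem:jones-proj-r-theta}. It is $\frac12$ times the matrix with $1$'s in positions $(1,1),(1,4),(4,1),(4,4)$, which is the $\theta=0$ case of the lemma, namely $a=1$, $b=c=0$ adjusted. Indeed $e^0$ should be the Jones projection of $\Delta$, which has rank $2$, while $e_1$ has rank $1$. So I would first double-check the normalization of Lemma~\ref{lem:jones-proj-r-theta}, since its $\theta=0$ matrix is $\mathrm{diag}$-like with $a=1$, consistent with the rank-$2$ projection $E_{11}\otimes E_{11}+E_{22}\otimes E_{22}$, possibly up to the vec-ordering convention.

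Next compute the denominator. $E_2(f_\theta)$ is obtained by applying $\mathrm{id}\otimes\frac12\operatorname{Tr}$ blockwise, i.e. averaging the appropriate $2\times2$ sub-blocks. Since $\operatorname{Tr}(e^\theta)=2$ and $\operatorname{Tr}(e_1)=1$, and $E_2$ of a projection containing $e_1$ should be scalar by the symmetry of these examples, I expect $E_2(f_\theta)=\frac14\mathbb{I}_2$ independently of $\theta$. The denominator is then the constant $\frac14$.

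For the numerator, note that $f_\theta f_\varphi=e^\theta e^\varphi-e_1$, using $e_1e^\theta=e_1=e^\theta e_1$. So I must compute $E_2(e^\theta e^\varphi)-\frac12\cdot\frac12\mathbb{I}_2$ as an explicit $2\times2$ matrix. The key simplification I would use is conjugation symmetry: $e^\theta=(r_\theta\otimes r_\theta)e^0(r_\theta\otimes r_\theta)^*$ in a suitable convention, as in Corollary~\ref{cor:equval-of-jones-proj} and the unitary invariance theorem. This reduces the problem to $\theta=0$ and $\varphi$ replaced by $\varphi-\theta$, after checking that the conjugation intertwines $E_2$ with $\operatorname{Ad}r_\theta$, which preserves norms. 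Then only the product $e^0e^{\psi}$ is needed, with $\psi=\varphi-\theta$. Writing $s=\sin 2\psi$ and using the entries $a,b,c$ of the lemma, I multiply the two $4\times4$ matrices, take the partial trace, and subtract $\frac14\mathbb{I}_2$. The result should be a matrix of the form $\begin{pmatrix}p&q\\-\bar q&p\end{pmatrix}$, as in the proof of Proposition~\ref{prop:gupta-modified}, with norm $\sqrt{p^2+|q|^2}$. Simplifying with $a=1-\frac{s^2}{2}$, $c=\frac{s^2}{2}$, $b=\frac14\sin 4\psi$ should give $\frac14|\cos 2\psi|$, and dividing by $\frac14$ yields the claim.

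The main obstacle is the bookkeeping in this trigonometric simplification, together with the vec/Kronecker ordering conventions in applying $E_2$. As a cross-check I would instead combine Proposition~\ref{prop:gupta-modified} with the unitary invariance: $r_\theta^*r_\varphi=r_{\varphi-\theta}$, whose entries satisfy $2|\lambda_{11}\lambda_{12}|=|\sin2(\varphi-\theta)|$. This gives $\cos\alpha=\sqrt{1-\sin^2(2\varphi-2\theta)}=|\cos(2\theta-2\varphi)|$. If the invariance result (Theorem~\ref{thm:uni-inv-angle}) may be cited, this is actually the cleanest route, and I would present it as the proof.
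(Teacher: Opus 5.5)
Your proposal is correct. The route you finally settle on is genuinely different from the paper's.

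\textbf{The paper's route.} It argues by direct computation. It multiplies the explicit $4\times4$ projections $e^\theta e^\varphi$ from Lemma~\ref{lem:jones-proj-r-theta} for general $\theta,\varphi$, applies $E_2$, and subtracts $E_2(e_1)=\frac14\mathbb{I}_2$. This gives a matrix $\begin{pmatrix}a&-b\\ b&a\end{pmatrix}$ with $a=\frac14\cos^2(2\theta-2\varphi)$ and $b=\frac18\sin(4\theta-4\varphi)$. Its norm $\frac14|\cos(2\theta-2\varphi)|$ is then divided by $\|e^\theta-e_1\|\,\|e^\varphi-e_1\|=\frac14$.

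\textbf{Your route.} You pass through Theorem~\ref{thm:uni-inv-angle} to the pair $(\Delta,\,r_{\varphi-\theta}\Delta r_{\varphi-\theta}^*)$. You then read the answer off Proposition~\ref{prop:gupta-modified}, using $2|\cos\psi\sin\psi|=|\sin2\psi|$. This is legitimate even though the invariance theorem appears later in the paper. Its proof (Lemmas~\ref{lem:angle-formula} and~\ref{lem:uni-inv-muv}, Proposition~\ref{prop:jones-proj}) never uses the rotation formula, so nothing is circular.

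\textbf{What each buys.} Yours is two lines and makes it transparent why only $\theta-\varphi$ matters. The paper's is self-contained given the explicit Jones projection. It does not lean on the Gupta--Sharma formula, whose published version contained a typo, so it also serves as an independent check of Proposition~\ref{prop:gupta-modified} on rotations.

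\textbf{Your first sketch.} This is a hybrid of the two routes: essentially Lemma~\ref{lem:uni-inv-muv} followed by the paper's computation at $\theta=0$. Your predictions there are right. $E_2(e^\theta-e_1)=\frac14\mathbb{I}_2$, and $M(\mathbb{I}_2,r_\psi)$ has diagonal $\frac14\cos^22\psi$ and off-diagonal entries $\pm\frac18\sin4\psi$, so its norm is $\frac14|\cos2\psi|$.

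\textbf{One slip.} You called $e_1$ ``the $\theta=0$ case'' of the lemma. As you then noticed yourself, $e^0=\operatorname{diag}(1,0,0,1)$ has rank $2$, whereas $e_1$ has rank $1$.
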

\begin{proof}
    Let $e^\theta$ and $e^\varphi$ denotes the Jones' projections corresponding to $r_\theta\Delta r_\theta^*$ and $r_\varphi\Delta r_\varphi^*$ respectively. Using Lemma \ref{lem:jones-proj-r-theta}, we calculate
    $$e^\theta e^\varphi = \begin{pmatrix}  x_{11} & x_{12} & x_{12} & x_{14} \\  x_{21} & x_{22} & x_{22} & -x_{21} \\  x_{21} & x_{22} & x_{22} & -x_{21} \\  x_{14} & -x_{12} & -x_{12} & x_{11}  \end{pmatrix}$$

    where the distinct $x_{ij}$'s as follows:
    $$x_{22} = \mathbf{\frac{1}{2} \sin(2\theta) \sin(2\varphi) \cos(2\theta - 2\varphi)}$$
    $$x_{12} = \mathbf{\frac{1}{2} \cos(2\theta) \sin(2\varphi) \cos(2\theta - 2\varphi)}$$$$x_{21} = \mathbf{\frac{1}{2} \sin(2\theta) \cos(2\varphi) \cos(2\theta - 2\varphi)}$$
    $$x_{11} = \mathbf{\frac{1}{4} \Big[ 1 + \cos^2(2\theta) + \cos^2(2\varphi) + \cos^2(2\theta - 2\varphi) \Big]}$$$$x_{14} = \mathbf{\frac{1}{4} \Big[ \sin^2(2\theta) + \sin^2(2\varphi) + \sin^2(2\theta - 2\varphi) \Big]}$$

    Therefore,
    \begin{align*}
        M(\theta, \varphi):=E_2(e^\theta e^\varphi) - E_2\left(e_1 \right)& = \begin{pmatrix}
        \frac{1 + \cos^2(2\theta - 2\varphi)}{4} &  -\frac{1}{8} \sin(4\theta - 4\varphi)\\
        \frac{1}{8} \sin(4\theta - 4\varphi) & \frac{1 + \cos^2(2\theta - 2\varphi)}{4}
    \end{pmatrix}
    -
    \frac{1}{4}\begin{pmatrix}
       1&  0\\
       0 & 1
    \end{pmatrix}\\
    &=\begin{pmatrix}
        \frac{\cos^2(2\theta - 2\varphi)}{4} &  -\frac{1}{8} \sin(4\theta - 4\varphi)\\
        \frac{1}{8} \sin(4\theta - 4\varphi) & \frac{\cos^2(2\theta - 2\varphi)}{4}
    \end{pmatrix}
    \end{align*}
    \noindent\textbf{Claim.} $\|M(\theta, \varphi)\|_{\text{op}}=\frac{1}{4}|\cos(2\theta - 2 \varphi)|$

    The matrix $M(\theta, \varphi)$ has the form $\begin{pmatrix} a & -b \\ b & a \end{pmatrix}$ for $a:=\frac{\cos^2(2\theta - 2\varphi)}{4} , b:=\frac{1}{8} \sin(4\theta - 4\varphi) \in \R$.
    
    For any matrix $A = \begin{pmatrix} a & -b \\ b & a \end{pmatrix}$, the operator norm is given by the square root of the largest eigenvalue of $A^T A$.
    $$A^T A = \begin{pmatrix} a & b \\ -b & a \end{pmatrix} \begin{pmatrix} a & -b \\ b & a \end{pmatrix} = \begin{pmatrix} a^2 + b^2 & 0 \\ 0 & a^2 + b^2 \end{pmatrix}$$
    
    The operator norm is therefore exactly $\sqrt{a^2 + b^2}=\frac{1}{4}|\cos(2\theta - 2 \varphi)|$.

    Therefore, 
    \[
    \cos\left( \alpha\left( r_\theta \Delta r_\theta^* , r_\varphi \Delta r_\varphi^* \right) \right) = \frac{\|M(\theta, \varphi)\|_{\text{op}}}{\|e^\theta - e_1\| \|e^\varphi - e_1\|}= \frac{\|M(\theta, \varphi)\|_{\text{op}}}{1/4}= |\cos(2\theta - 2 \varphi)|.
    \]
\end{proof}

\begin{prop}\label{prop:jones-proj}
    Let $(E_{ij})$ be the standard matrix units in $\mn{n}$, and let $u\in\sU(\mn{n})$. For $1\leq k\leq n$, let 
$p_k=uE_{kk}u^*$, be the orthogonal projection onto the $k$-th column of $u$. If $e^u$ denotes the Jones projection in the basic construction associated with the inclusion
$u\Delta^{(n)}u^*\subseteq^{E_u}\mn{n},$ (see (\ref{eq:unitary-conditional}))
then
\[
e^u
=
\sum_{k=1}^n p_k\otimes\overline{p_k}
=
\sum_{k=1}^n
\left(uE_{kk}u^*\right)
\otimes
\left(\overline{u}E_{kk}u^T\right),
\]
where $\overline{p_k}$ denotes the entrywise complex conjugate of $p_k$.
\end{prop}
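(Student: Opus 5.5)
The plan is to identify $e^u$ concretely as an operator on the Hilbert space $(\mn{n},\langle-,-\rangle_\tau)$, and then recognise that operator as $\Phi$ applied to the claimed tensor. By Proposition \ref{prop:gupta-jones-proj}-(ii) and (iv), the basic construction of $u\Delta^{(n)}u^*\subseteq^{E_u}\mn{n}$ sits inside the basic construction of $\C\otimes\mathbb{I}_n\subseteq^{E_0}\mn{n}$. By Theorem \ref{thm:basic-c-ten-I}-(ii), the latter is $\sB(\mn{n},\langle-,-\rangle_\tau)\cong^{\Phi}\mn{n}\otimes\mn{n}$. Under these identifications the Jones projection is the operator determined by $e^u(\eta(X))=\eta(E_u(X))$. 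Since $E_u$ is $\tau$-preserving, this is the orthogonal projection of $L^2(\mn{n},\tau)$ onto $u\Delta^{(n)}u^*$. So it suffices to show
\[
\Phi\Big(\sum_{k=1}^n p_k\otimes\overline{p_k}\Big)(X)=E_u(X)\qquad\text{for all }X\in\mn{n}.
\]

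\textbf{Step 1: rewrite $E_u$.} First note that $E_{\Delta^{(n)}}(Y)=\sum_k E_{kk}YE_{kk}$. Then
\[
E_u(X)=uE_{\Delta^{(n)}}(u^*Xu)u^*=\sum_k (uE_{kk}u^*)X(uE_{kk}u^*)=\sum_k p_kXp_k.
\]

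\textbf{Step 2: evaluate $\Phi$.} By the definition of $\Phi$, $\Phi(p_k\otimes\overline{p_k})(X)=p_kX\overline{p_k}^{\,T}=p_kXp_k^*$. Since $p_k$ is self-adjoint, this equals $p_kXp_k$. Summing over $k$ and comparing with Step 1 gives the first equality in the statement.

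\textbf{Step 3: the second expression.} The second form follows from $\overline{uE_{kk}u^*}=\overline{u}\,E_{kk}\,\overline{u^*}=\overline{u}E_{kk}u^T$, since $E_{kk}$ is real.

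\textbf{Sanity check.} I would also verify that the resulting operator is indeed a projection. It is idempotent because $p_kp_l=\delta_{kl}p_k$, and it is self-adjoint for $\tau$ because $\tau(Y^*p_kXp_k)=\tau((p_kYp_k)^*X)$. This confirms that it coincides with $e_\sC$ as defined in Watatani's construction.

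\textbf{Expected obstacle.} The only delicate point is the bookkeeping of identifications rather than any computation. One must justify that the Jones projection $e^u\in(u\Delta^{(n)}u^*)_1$, viewed inside $\sA_1=\sL_{\C}(\mn{n})$ via Proposition \ref{prop:gupta-jones-proj}-(ii), acts on the module $\mn{n}$ exactly as $\eta(X)\mapsto\eta(E_u(X))$. This requires that the Hilbert $u\Delta^{(n)}u^*$-module completion and the $\C$-module completion are both just $\mn{n}$ (finite dimension, equivalent norms by Proposition \ref{prop:gupta-jones-proj}-(iv)). I would handle this by citing the construction in \cite{gupta2024} underlying Proposition \ref{prop:gupta-jones-proj}. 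I would then cross-check the formula for $n=2$, $u=r_\theta$ against Lemma \ref{lem:jones-proj-r-theta}.
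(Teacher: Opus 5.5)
Your proposal is correct and takes essentially the same approach as the paper: write $E_u(X)=\sum_k p_kXp_k$, identify $e^u$ with the operator $X\mapsto E_u(X)$ on $(\mn{n},\langle-,-\rangle_\tau)$, and match it to the tensor via $\Phi(A\otimes B)=A(-)B^T$ and $p_k^T=\overline{p_k}$. Your added care about the identification $\sC_1\subseteq\sA_1$ via Proposition \ref{prop:gupta-jones-proj}, and your sanity checks (idempotence, $\tau$-self-adjointness, and agreement with Lemma \ref{lem:jones-proj-r-theta}, which does hold), go slightly beyond what the paper writes but do not change the argument.
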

\begin{proof}
    First, consider the diagonal subalgebra $\Delta^{(n)} \subseteq M:=\mn{n}$. The canonical trace-preserving conditional expectation $E_{\Delta^{(n)}}: M \to \Delta^{(n)}$ simply isolates the diagonal entries of a matrix:
    $$E_{\Delta^{(n)}}(x) = E_{11} x E_{11} + \cdots + E_{nn} x E_{nn}= \sum_{k=1}^n E_{kk} x E_{kk}$$

    Now, our subalgebra is $N = u\Delta^{(n)} u^*$. Because $u$ is a unitary matrix, the map $\text{Ad}_u(y) = u y u^*$ is an algebra automorphism of $M$ that preserves the trace $\tau$. Thus for any $x \in M$:
    \begin{align*}
        E_N(x) = u E_{\Delta^{(n)}}(u^* x u) u^* &= u \Big( \sum_{k=1}^n E_{kk} (u^*xu) E_{kk} \Big) u^* = \sum_{k=1}^n (uE_{kk}u^*)x (uE_{kk}u^*)=\sum_{k=1}^n p_k xp_k
    \end{align*}
    Therefore, 
    $$e^u(x) = E_N(x) = \sum_{k=1}^n p_k x p_k \quad (x \in M).$$

    The basic construction $\langle M, e_N \rangle$ acts on $L^2(M)$. The algebra of all linear operators on $\mn{n}$ is canonically isomorphic to $\mn{n} \otimes \mn{n}$. 

    Under the standard Kronecker tensor product convention (or vectorization isomorphism), the basic tensor $A \otimes B$ acts on a matrix $x \in \mn{n}$ via left multiplication by $A$ and right multiplication by the transpose $B^T$:
    $$(A \otimes B)(x) = A x B^T$$

    Therefore, the tensor representation of the operator $e^u$ is:
    $$e^u = \sum_{k=1}^n p_k \otimes p_k^T$$

    By definition, $p_k = u E_{kk} u^*$. Because $E_{kk}$ is a real, symmetric, idempotent matrix, $p_k$ is an orthogonal projection, which means it is self-adjoint. Hence $p_k = p_k^*  = \overline{p_k}^T, \text{ that is, } p_k^T = \overline{p_k}.$
    Therefore, 
    $$e^u = \sum_{k=1}^n p_k \otimes \overline{p_k}$$
    This completes the proof.
\end{proof}

\begin{cor}\label{cor:equval-of-jones-proj}
    For a $n\times n$ unitary matrix $u\in\mn{n}$, we have
\[
e^u=(u\otimes\overline{u})e^{\mathbb{I}_n}(u\otimes\overline{u})^*.
\]
Consequently, the Jones projections associated with any two masas of $\mn{n}$ are unitarily equivalent in $\mn{n}\otimes\mn{n}$ to the projection $e^{\mathbb{I}_n}$.
\end{cor}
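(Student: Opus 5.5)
Starting from Proposition \ref{prop:jones-proj}, which writes
\[
e^u=\sum_{k=1}^n (uE_{kk}u^*)\otimes(\overline{u}E_{kk}u^T),
\]
the plan is to read this as a conjugation of the diagonal case. Taking $u=\mathbb{I}_n$ in the same proposition gives
\[
e^{\mathbb{I}_n}=\sum_{k=1}^n E_{kk}\otimes E_{kk}.
\]
Each summand of $e^u$ should then be recognised as a conjugate of the corresponding summand of $e^{\mathbb{I}_n}$.

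We use the mixed-product property of the Kronecker product, $(A\otimes B)(C\otimes D)=AC\otimes BD$, which holds for the convention $A\otimes B=[B_{ij}A]$ fixed in the preliminaries. It gives
\[
(u\otimes\overline{u})(E_{kk}\otimes E_{kk})(u^*\otimes u^T)=uE_{kk}u^*\otimes \overline{u}E_{kk}u^T .
\]
We also check that $(u\otimes\overline{u})^*=u^*\otimes\overline{u}^*=u^*\otimes u^T$, and that $u\otimes\overline{u}$ is unitary: $\overline{u}$ is unitary because $\overline{u}\,\overline{u}^*=\overline{uu^*}=\mathbb{I}_n$, and the mixed-product property then gives $(u\otimes\overline{u})(u\otimes\overline{u})^*=\mathbb{I}_{n^2}$. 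Summing the displayed identity over $k$ yields $e^u=(u\otimes\overline{u})\,e^{\mathbb{I}_n}\,(u\otimes\overline{u})^*$.

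For the consequence, any masa of $\mn{n}$ is of the form $u\Delta^{(n)}u^*$ for some unitary $u$, since its minimal projections are $n$ mutually orthogonal rank-one projections. Its Jones projection is therefore $e^u$, which by the first part is unitarily equivalent in $\mn{n}\otimes\mn{n}$ to $e^{\mathbb{I}_n}$ via the unitary $u\otimes\overline{u}$.

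The only delicate point is convention bookkeeping. We must confirm that the mixed-product rule and the adjoint rule $(A\otimes B)^*=A^*\otimes B^*$ hold for the paper's ordering of the Kronecker product; both follow by direct block computation. We must also confirm that this is consistent with the identification $\Phi(A\otimes B)=A(-)B^T$. Alternatively, that identification gives a check independent of the Kronecker convention: for all $x$,
\[
\Phi(u\otimes\overline{u})\,\Phi(e^{\mathbb{I}_n})\,\Phi(u\otimes\overline{u})^{*}(x)=u\,E_{\Delta^{(n)}}(u^*xu)\,u^*=E_u(x),
\]
and $E_u(x)=e^u(x)$.
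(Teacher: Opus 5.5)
Your proposal is correct and matches the paper's proof: both rewrite each summand of $e^u$ from Proposition~\ref{prop:jones-proj} as $(u\otimes\overline{u})(E_{kk}\otimes E_{kk})(u^*\otimes u^T)$ via the mixed-product rule, then use $u^T=\overline{u}^*$. Your extra checks are sound and supply details the paper leaves implicit: the unitarity of $u\otimes\overline{u}$, the $\Phi$-level verification, and the reduction of an arbitrary masa to the form $u\Delta^{(n)}u^*$ for the ``consequently'' part.
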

\begin{proof}
    Indeed from Proposition \ref{prop:jones-proj}, we get
    \begin{align*}
        e^u = \sum_{k=1}^n \left( uE_{kk} u^* \right) \otimes \left(\overline{u} E_{kk} u^T\right)=  (u\otimes \overline{u}) \left( \sum_{k} (E_{kk} \otimes E_{kk}) \right) (u^* \otimes u^T) = (u\otimes \overline{u}) e^{\mathbb{I}_n} (u^* \otimes u^T).
    \end{align*}
    To complete the proof note that $u^T = \overline{u}^*$.
\end{proof}

\begin{cor}\label{cor:equval-jones-2}
    For any two $n\times n$ unitary matrices $u,v\in\mn{n}$, we have
\[
e^v
=
(u\otimes\overline{u})\,e^{u^*v}\,(u\otimes\overline{u})^*.
\]
\end{cor}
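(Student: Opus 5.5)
The plan is to apply Corollary \ref{cor:equval-of-jones-proj} twice and then use that conjugation by a Kronecker product of unitaries is multiplicative.

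First, Corollary \ref{cor:equval-of-jones-proj} applied to the unitary $v$ gives
\[
e^v=(v\otimes\overline{v})\,e^{\mathbb{I}_n}\,(v\otimes\overline{v})^*.
\]
Applied to the unitary $u^*v$, it gives
\[
e^{u^*v}=(u^*v\otimes\overline{u^*v})\,e^{\mathbb{I}_n}\,(u^*v\otimes\overline{u^*v})^*.
\]

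Next, I will factor $v\otimes\overline{v}$. Entrywise conjugation is multiplicative, so $\overline{u^*v}=\overline{u^*}\,\overline{v}$. The Kronecker product satisfies the mixed-product rule $(A\otimes B)(C\otimes D)=AC\otimes BD$; with the paper's convention $A\otimes B=[B_{ij}A]$ this is the usual identity with the factors in swapped roles, and it still holds. Hence
\[
(u\otimes\overline{u})(u^*v\otimes\overline{u^*}\,\overline{v})=uu^*v\otimes\overline{u}\,\overline{u^*}\,\overline{v}=v\otimes\overline{v},
\]
where the last step uses $\overline{u}\,\overline{u^*}=\overline{uu^*}=\mathbb{I}_n$.

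Finally, I substitute this factorization into the expression for $e^v$. Using $(XY)^*=Y^*X^*$,
\[
e^v=(u\otimes\overline{u})\,(u^*v\otimes\overline{u^*v})\,e^{\mathbb{I}_n}\,(u^*v\otimes\overline{u^*v})^*\,(u\otimes\overline{u})^*=(u\otimes\overline{u})\,e^{u^*v}\,(u\otimes\overline{u})^*,
\]
which is the claim.

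The only point needing care is confirming the mixed-product rule under the paper's nonstandard Kronecker convention. This is quickly checked through the identification $\Phi(A\otimes B)=A(-)B^T$ from Theorem \ref{thm:basic-c-ten-I}. There, composition of $A(-)B^T$ with $C(-)D^T$ gives $AC(-)D^TB^T=AC(-)(BD)^T$, so the product of $A\otimes B$ and $C\otimes D$ is $AC\otimes BD$. The same identification also makes adjoints behave correctly, since $(A\otimes B)^*=A^*\otimes B^*$.
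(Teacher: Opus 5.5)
Your proposal is correct and follows the paper's proof: you factor $v\otimes\overline{v}=(u\otimes\overline{u})\big((u^*v)\otimes\overline{u^*v}\big)$ by the mixed-product rule, substitute into the expression for $e^v$ from Corollary \ref{cor:equval-of-jones-proj}, and recognize the inner conjugate as $e^{u^*v}$. Your check that the mixed-product rule still holds under the paper's reversed Kronecker convention is a detail the paper leaves implicit.
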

\begin{proof}
    Write $v=u(u^*v)$. Using the identity
\[
(AB)\otimes\overline{AB}=(A\otimes\overline{A})(B\otimes\overline{B}),
\]
we obtain
\[
v\otimes\overline{v}
=
(u\otimes\overline{u})
\big((u^*v)\otimes\overline{u^*v}\big).
\]

Now, by Corollary \ref{cor:equval-of-jones-proj},
\[
\begin{aligned}
e^v
&=(v\otimes\overline{v})e^{\mathbb{I}_n}(v\otimes\overline{v})^*\\
&=(u\otimes\overline{u})
\Big((u^*v)\otimes\overline{u^*v}\Big)
e^{\mathbb{I}_n}
\Big((u^*v)\otimes\overline{u^*v}\Big)^*
(u\otimes\overline{u})^*\\
&=(u\otimes\overline{u})e^{u^*v}(u\otimes\overline{u})^*.
\end{aligned}
\]
\end{proof}

\begin{lem}\label{lem:partial_trace_property}
    For any unitary $u\in\sU(\mn{n})$ and any $X\in\mn{n}\otimes\mn{n}$, we have
\[
\left(\mathrm{id}\otimes\frac{1}{n}\operatorname{Tr}\right)
\Big((u\otimes\bar{u})X(u\otimes\bar{u})^*\Big)
=
(u\otimes 1)
\left[
\left(\mathrm{id}\otimes\frac{1}{n}\operatorname{Tr}\right)(X)
\right]
(u\otimes 1)^*.
\]
\end{lem}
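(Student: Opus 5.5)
By linearity of both sides in $X$, it suffices to verify the identity on elementary tensors $X = A\otimes B$ with $A,B\in\mn{n}$, since these span $\mn{n}\otimes\mn{n}$. The plan is therefore to compute both sides explicitly for such an $X$ and compare them.

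For the left-hand side, I will use the multiplicativity of the Kronecker product, $(A_1\otimes B_1)(A_2\otimes B_2)=A_1A_2\otimes B_1B_2$. It holds with the paper's convention $A\otimes B=[B_{ij}A]$, and equivalently via $\Phi$ and the action $(A\otimes B)(x)=AxB^T$, because $(A_1 A_2) x (B_1B_2)^T = A_1(A_2 x B_2^T)B_1^T$. It also gives $(u\otimes\bar u)^*=u^*\otimes u^T$. Then
\[
(u\otimes\bar u)(A\otimes B)(u\otimes\bar u)^* = uAu^*\otimes \bar u B u^T .
\]
Applying $\mathrm{id}\otimes\frac1n\operatorname{Tr}$ produces $\frac1n\operatorname{Tr}(\bar u B u^T)\, uAu^*$.

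For the right-hand side, $\left(\mathrm{id}\otimes\frac1n\operatorname{Tr}\right)(A\otimes B)=\frac1n\operatorname{Tr}(B)\,A$, and conjugating by $u\otimes 1$ (identified with $u$ under $\mn{n}\otimes\C=\mn{n}$) yields $\frac1n\operatorname{Tr}(B)\,uAu^*$. So the whole comparison reduces to the scalar identity $\operatorname{Tr}(\bar u B u^T)=\operatorname{Tr}(B)$. This holds by cyclicity of the trace together with $u^T\bar u=\overline{u^*u}=\mathbb{I}_n$, which is exactly the statement that $\bar u$ is unitary.

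The only point requiring care is bookkeeping of conventions: confirming that under the paper's Kronecker convention the partial trace $\mathrm{id}\otimes\frac1n\operatorname{Tr}$ acts on the second tensor factor as stated in Theorem~\ref{thm:basic-c-ten-I}(iv), and that $u\otimes 1$ in the statement means $u\otimes\mathbb{I}_n$ viewed after the partial trace (that is, just $u$). Once these identifications are fixed, the argument is a direct two-line computation, and the result extends to all $X$ by linearity.
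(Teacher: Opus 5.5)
Your proposal is correct and takes the same approach as the paper, which also verifies the identity on simple tensors and extends by linearity. Your explicit computation, reducing everything to $\operatorname{Tr}(\bar u B u^T)=\operatorname{Tr}(B)$, is the same one written out in the paper's later Lemma~\ref{lem:prop-partial-tr}, with $A=u$, $C=u^*$ and the unitary $\bar u$ in the second tensor factor.
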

\begin{proof}
    The identity can be verified for simple tensors and then by linearity the proof follows.
\end{proof}

\begin{prop}
   Let $u,v\in\sU(\mn{n})$ be arbitrary $n \times n$ unitary matrices. Then the product $e^u e^v$ is unitarily equivalent to $e^{\mathbb{I}_n}e^{u^*v}$ via the unitary $u\otimes\bar{u}$; that is,
\[
e^u e^v
=
(u\otimes\bar{u})
\left(e^{\mathbb{I}_n}e^{u^*v}\right)
(u\otimes\bar{u})^*.
\]
\end{prop}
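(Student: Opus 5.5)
The identity follows directly from the two conjugation formulas already established for the Jones projections. By Corollary \ref{cor:equval-of-jones-proj} applied to $u$, and writing $U:=u\otimes\bar u$, we have
\[
e^u = U\,e^{\mathbb{I}_n}\,U^*.
\]
By Corollary \ref{cor:equval-jones-2} with the same $u$,
\[
e^v = U\,e^{u^*v}\,U^*.
\]
Multiplying these two expressions, the inner factor $U^*U$ should collapse to the identity, giving
\[
e^ue^v = U\,e^{\mathbb{I}_n}\,U^*U\,e^{u^*v}\,U^* = U\left(e^{\mathbb{I}_n}e^{u^*v}\right)U^*,
\]
which is exactly the claimed formula.

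The only point needing verification is that $U$ is a unitary in $\mn{n}\otimes\mn{n}$, so that $U^*U=\mathbb{I}_{n^2}$. For this I would note that $\bar u$ is unitary whenever $u$ is, since $\bar u\,\bar u^{*}=\overline{uu^*}=\mathbb{I}_n$. The Kronecker product then satisfies $(u\otimes\bar u)^*=u^*\otimes\bar u^{*}$ and is multiplicative on simple tensors. Hence
\[
U^*U=(u^*u)\otimes(\bar u^{*}\bar u)=\mathbb{I}_n\otimes\mathbb{I}_n=\mathbb{I}_{n^2}.
\]
The paper's convention $A\otimes B=[B_{ij}A]$ swaps the roles of the factors relative to the usual Kronecker product, but the mixed-product property $(A\otimes B)(C\otimes D)=AC\otimes BD$ still holds. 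So this check does not depend on the ordering convention.

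There is no real obstacle. The one thing to keep straight is that the same conjugating unitary $u\otimes\bar u$ appears in both Corollary \ref{cor:equval-of-jones-proj} and Corollary \ref{cor:equval-jones-2}, which is what makes the cancellation possible. The statement is a direct algebraic consequence of these two corollaries, and it feeds into the unitary invariance of the interior angle once Lemma \ref{lem:partial_trace_property} is applied to it.
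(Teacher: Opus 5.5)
Your proposal is correct and follows the paper's proof: you conjugate $e^u$ and $e^v$ by the same unitary $u\otimes\bar u$ using Corollaries \ref{cor:equval-of-jones-proj} and \ref{cor:equval-jones-2}, and the inner factor $(u\otimes\bar u)^*(u\otimes\bar u)=\mathbb{I}_{n^2}$ cancels. Your extra check that $u\otimes\bar u$ is unitary under the paper's swapped Kronecker convention is sound; the paper simply asserts this unitarity.
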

\begin{proof}
    Indeed using corollary \ref{cor:equval-of-jones-proj} and corollary \ref{cor:equval-jones-2} both we get, 
    $$\begin{aligned}   
    e^u e^v &= \Big[ (u \otimes \bar{u}) e^{\mathbb{I}_n} (u \otimes \bar{u})^* \Big] \Big[ (u \otimes \bar{u}) e^{u^* v} (u \otimes \bar{u})^* \Big] \\    &= (u \otimes \bar{u}) e^{\mathbb{I}_n} \Big( (u \otimes \bar{u})^* (u \otimes \bar{u}) \Big) e^{u^* v} (u \otimes \bar{u})^*   
    \end{aligned}$$

    Since $(u \otimes \bar{u})$ is unitary, $(u \otimes \bar{u})^* (u \otimes \bar{u}) = \mathbb{I}_{n^2}$:
    $$e^u e^v = (u \otimes \bar{u}) \left( e^{\mathbb{I}_n} e^{u^* v} \right) (u \otimes \bar{u})^*$$
\end{proof}

\begin{lem}\label{lem:eu-e-1}
For any $n\times n$ unitary matrices $u\in\mn{n}$, we have
\[
\|e^u-e_1\|_{\mathrm{op}}=\frac{\sqrt{n-1}}{n}.
\]
\end{lem}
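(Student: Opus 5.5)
The norm $\|\cdot\|_{\mathrm{op}}$ here cannot be the operator norm on $\mn{n}\otimes\mn{n}$. By Proposition \ref{prop:gupta-jones-proj}-(iii), $e^u-e_1$ is a nonzero projection, so its operator norm is $1$. The norm is therefore the one appearing in the denominator of Definition \ref{def:interior_angle}: the Hilbert $\mn{n}$-module norm on the basic construction coming from the dual conditional expectation. Concretely, $\|x\|_{\mn{n}}=\|\Tilde{E}(x^*x)\|^{1/2}$ with $\Tilde{E}=E_n=\mathrm{id}\otimes\frac1n\operatorname{Tr}$, as in Remark \ref{rem:jones-proj-e-1}, and the outer norm is the operator norm in $\mn{n}$. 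The plan is to compute $E_n\big((e^u-e_1)^*(e^u-e_1)\big)$ explicitly and show it is a scalar multiple of $\mathbb{I}_n$.

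First, by Proposition \ref{prop:gupta-jones-proj}-(iii) applied to $\C\otimes\mathbb{I}_n\subseteq u\Delta^{(n)}u^*\subseteq\mn{n}$, we have $e^ue_1=e_1=e_1e^u$. Hence $e^u-e_1$ is a self-adjoint idempotent, and
\[
(e^u-e_1)^*(e^u-e_1)=e^u-e_1 .
\]
It therefore suffices to compute $E_n(e^u)$ and $E_n(e_1)$ separately and subtract.

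For $e^u$, I use Proposition \ref{prop:jones-proj}, which gives $e^u=\sum_k p_k\otimes\overline{p_k}$. Since each $\overline{p_k}$ is a rank-one projection, $\frac1n\operatorname{Tr}(\overline{p_k})=\frac1n$. This gives
\[
E_n(e^u)=\frac1n\sum_k p_k=\frac1n\mathbb{I}_n .
\]
Alternatively, one can combine Corollary \ref{cor:equval-of-jones-proj} with Lemma \ref{lem:partial_trace_property} to reduce to $u=\mathbb{I}_n$.

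For $e_1$, Remark \ref{rem:jones-proj-e-1} gives $e_1=\frac1n\sum_{i,j}E_{ij}\otimes E_{ij}$. Since $\frac1n\operatorname{Tr}(E_{ij})=\frac1n\delta_{ij}$, we get
\[
E_n(e_1)=\frac1{n^2}\mathbb{I}_n,
\]
which is consistent with Proposition \ref{prop:gupta-jones-proj}-(vi), since $\Tilde{E}(e_1)=\operatorname{Ind}(E_0)^{-1}$.

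Subtracting, $E_n(e^u-e_1)=\frac{n-1}{n^2}\mathbb{I}_n$. Its operator norm is $\frac{n-1}{n^2}$, and taking the square root yields $\frac{\sqrt{n-1}}{n}$. As a check, for $n=2$ this gives $\frac12$, so the product of the two such norms is the value $\frac14$ used in the $r_\theta$ computation above.

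The only delicate point is correctly identifying which norm is meant and justifying that $e^u-e_1$ is a projection. After that, the two partial-trace computations are routine.
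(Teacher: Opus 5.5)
Your proposal is correct and follows the paper's proof essentially step for step: it computes $E_n(e_1)=\frac{1}{n^2}\mathbb{I}_n$ and $E_n(e^u)=\frac1n\mathbb{I}_n$ via Proposition \ref{prop:jones-proj}, then uses $(e^u-e_1)^2=e^u-e_1$ to get $\|e^u-e_1\|^2=\|E_n(e^u-e_1)\|=\frac{n-1}{n^2}$. Your reading of the norm as the Hilbert-module norm $\|\tilde{E}(x^*x)\|^{1/2}$, rather than the literal operator norm (which would be $1$ here), is exactly the norm the paper's proof uses, and your appeal to Proposition \ref{prop:gupta-jones-proj}-(iii) makes explicit the projection step that the paper leaves implicit.
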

\begin{proof}
First note that $E_n(e_1)= \frac{1}{n^2} \mathbb{I}_n\otimes 1$ for all $n\ge 2$. Indeed,
\begin{align*}
    E_n(e_1) = \left( \operatorname{id} \otimes \frac{1}{n} \operatorname{Tr} \right)\left(\frac{1}{n} \sum_{i=1}^n \sum_{j=1}^n E_{ij} \otimes E_{ij}\right)
    =\frac{1}{n^2}\sum_{i, j}\left(E_{ij} \otimes \operatorname{Tr}(E_{ij})\right)= \frac{1}{n^2} \mathbb{I}_n\otimes 1.
\end{align*}

Also from Proposition \ref{prop:jones-proj}, 
\begin{align*}
    E_n(e^u) = \left(\text{id} \otimes \frac{1}{n}\text{Tr}\right)(e^u) 
    &= \left(\text{id} \otimes \frac{1}{n}\text{Tr}\right) \left( \sum_{k=1}^n p_k \otimes \overline{p_k} \right)\\
    &= \sum_{k = 1}^n p_k \otimes \frac{1}{n} \operatorname{Tr}(u E_{kk} u^*)
    = \sum_{k = 1}^n p_k \otimes \frac{1}{n} =  \left(\sum_{k = 1}^n u E_{kk} u^*\right) \otimes \frac{1}{n} = \frac{1}{n}\mathbb{I}_n \otimes 1
\end{align*}

    Therefore, 
    \begin{align*}
       E_n(e^u-e_1)= \left(\frac{1}{n} \mathbb{I}_n\otimes 1 \right)- \left(\frac{1}{n^2}\mathbb{I}_n \otimes 1\right)=\left( \frac{n-1}{n^2}\right) \mathbb{I}_n \otimes 1.
    \end{align*}
    
    Hence we get, 
    \begin{align*}
    \|e^u - e_1\|^2 
    &= \|E_n\left((e^u -e_1)(e^u -e_1)\right)\|_{\text{op}}= \|E_n\left(e^u -e_1\right)\|_{\text{op}} = \frac{n-1}{n^2}.
    \end{align*}
    
\end{proof}

\begin{lem}\label{lem:angle-formula}
   For $u,v\in\sU(\mn{n})$, define $
M(u,v):=E_n(e^u e^v)-E_n(e_1).$
Then
\[
\cos\alpha\left(u\Delta^{(n)}u^*,v\Delta^{(n)}v^*\right)
=
\frac{n^2\|M(u,v)\|_{\mathrm{op}}}{n-1}.
\]
\end{lem}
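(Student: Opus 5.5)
We unwind Definition \ref{def:interior_angle} in the concrete picture of Remark \ref{rem:jones-proj-e-1}. Here $\sA=\mn{n}$, $\sB=\C\otimes\mathbb{I}_n$, and the basic construction is $\mn{n}\otimes\mn{n}$. The $\sA$-valued inner product on $\sA_1$ is $\langle X,Y\rangle_{\sA}=E_n(X^*Y)$, where $E_n=\mathrm{id}\otimes\frac1n\operatorname{Tr}$ is the dual conditional expectation, and $\|X\|_{\sA}=\|E_n(X^*X)\|^{1/2}$. The Jones projections are $e_\sC=e^u$, $e_\sD=e^v$ and $e_\sB=e_1$.

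\textbf{Numerator.} Since all these elements are self-adjoint, the numerator is
\[
\|E_n\big((e^u-e_1)(e^v-e_1)\big)\|.
\]
By Proposition \ref{prop:gupta-jones-proj}(iii) we have $e^ue_1=e_1=e_1e^u$, and likewise for $e^v$. Expanding the product therefore gives
\[
(e^u-e_1)(e^v-e_1)=e^ue^v-e_1-e_1+e_1=e^ue^v-e_1.
\]
Applying $E_n$ yields exactly $M(u,v)$, so the numerator equals $\|M(u,v)\|_{\mathrm{op}}$.

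\textbf{Denominator.} Each factor is $\|e^u-e_1\|_{\sA}$, which is the quantity computed in Lemma \ref{lem:eu-e-1}. That proof computes $\|E_n((e^u-e_1)^2)\|^{1/2}$: since $e^u-e_1$ is a projection (by (iii)), $(e^u-e_1)^2=e^u-e_1$, and $E_n(e^u-e_1)=\frac{n-1}{n^2}\mathbb{I}_n$. This gives $\|e^u-e_1\|_{\sA}=\frac{\sqrt{n-1}}{n}$, independent of $u$, and the same holds for $v$. The denominator is therefore $\frac{n-1}{n^2}$.

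Dividing the numerator by the denominator gives the stated formula $\frac{n^2}{n-1}\|M(u,v)\|_{\mathrm{op}}$.

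\textbf{Main obstacle.} The only delicate point is to confirm that the norm appearing in Lemma \ref{lem:eu-e-1} is really the module norm $\|\cdot\|_{\sA}$ used in Definition \ref{def:interior_angle}, rather than the operator norm. The lemma is labelled $\|\cdot\|_{\mathrm{op}}$, but its proof computes $\|E_n(\cdot)\|^{1/2}$, which is the module norm. A second point to check is that the identification $\Phi$ of Theorem \ref{thm:basic-c-ten-I} carries the $C^*$-module inner product on $\sA_1$ over $\sA$ to $E_n(X^*Y)$. This follows from part (iv) of that theorem together with the identification of $\sA$ with $\mn{n}\otimes\mathbb{I}$ (equivalently $\mn{n}\otimes 1$).
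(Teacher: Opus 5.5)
Your proof is correct and follows the paper's argument. Like the paper, you unwind Definition \ref{def:interior_angle}, use Proposition \ref{prop:gupta-jones-proj}(iii) to reduce the numerator to $\|E_n(e^ue^v-e_1)\|=\|M(u,v)\|$, and take the denominator $\frac{n-1}{n^2}$ from Lemma \ref{lem:eu-e-1}. Your caveat is also right: the value $\frac{\sqrt{n-1}}{n}$ in Lemma \ref{lem:eu-e-1} is the Hilbert-module norm $\|E_n((e^u-e_1)^2)\|^{1/2}$, which is exactly what Definition \ref{def:interior_angle} requires, and not the operator norm, which equals $1$ for the nonzero projection $e^u-e_1$.
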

\begin{proof}
By the Defintion \ref{def:interior_angle}, we get
    \[
    \cos \alpha(u\Delta^{(n)} u^*, v\Delta^{(n)} v^*)
    = \frac{\|\langle e^u - e_1, e^v -e_1 \rangle \|_{\text{op}}}{\|e^u - e_1 \|_{\text{op}} \|e^v - e_1 \|_{\text{op}}}
    = \frac{\|M(u, v) \|_{\text{op}}}{\|e^u - e_1 \|_{\text{op}} \|e^v - e_1 \|_{\text{op}}}
    \]
and rest follows from Proposition \ref{prop:gupta-jones-proj}-(iii) and Lemma \ref{lem:eu-e-1}.
\end{proof}

\begin{lem}\label{lem:prop-partial-tr}
  Let $\tau:\mn{k}\to\C$ denote the normalized trace on $\mn{k}$, and let
\[
\operatorname{id}\otimes\tau:\mn{n}\otimes\mn{k}\to\mn{n}\otimes\C
\]
be the partial trace defined by
\[
(\operatorname{id}\otimes\tau)(A\otimes B):=A\otimes\tau(B).
\]
Then, for all $A,C\in\mn{n}$, every unitary $U\in\mn{k}$, and every $X\in\mn{n}\otimes\mn{k}$, we have
\[
(\operatorname{id}\otimes\tau)
\Big((A\otimes U)X(C\otimes U^*)\Big)
=
(A\otimes1)
\Big[(\operatorname{id}\otimes\tau)(X)\Big]
(C\otimes1).
\]
\end{lem}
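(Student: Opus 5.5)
Since both sides of the identity are linear in $X$ (for fixed $A$, $C$, $U$), and $\mn{n}\otimes\mn{k}$ is spanned by simple tensors $B\otimes D$ with $B\in\mn{n}$ and $D\in\mn{k}$, it suffices to verify the identity for $X=B\otimes D$ and then extend by linearity. Linearity of the left-hand side holds because $X\mapsto (A\otimes U)X(C\otimes U^*)$ is linear and $\operatorname{id}\otimes\tau$ is linear. The right-hand side is linear in $X$ for the same reasons.

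For a simple tensor, we multiply componentwise in the Kronecker product, using $(A\otimes U)(B\otimes D)(C\otimes U^*)=(ABC)\otimes(UDU^*)$. This multiplicativity follows directly from the definition $A\otimes B=[B_{ij}A]$ adopted in the paper, which is just the Kronecker product with the factors swapped. Applying the partial trace gives
\[
(\operatorname{id}\otimes\tau)\big((ABC)\otimes(UDU^*)\big)=ABC\otimes\tau(UDU^*)=ABC\otimes\tau(D),
\]
where the last equality uses the unitary invariance of the trace, $\tau(UDU^*)=\tau(U^*UD)=\tau(D)$. On the other side,
\[
(A\otimes 1)\big(B\otimes\tau(D)\big)(C\otimes 1)=\tau(D)\,ABC\otimes 1 = ABC\otimes\tau(D),
\]
using the identification $\mn{n}\otimes\C=\mn{n}$ and again the multiplicativity of the tensor product. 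The two expressions agree, and linearity finishes the proof.

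The only point requiring any care is the mixed-product rule $(A_1\otimes B_1)(A_2\otimes B_2)=A_1A_2\otimes B_1B_2$ under the paper's block convention. Verifying it is a routine block-matrix multiplication: $\sum_l (B_1)_{il}(B_2)_{lj}A_1A_2=(B_1B_2)_{ij}A_1A_2$. There is no genuine obstacle; the lemma is essentially a bimodule property of the conditional expectation $\operatorname{id}\otimes\tau$ combined with the trace property.
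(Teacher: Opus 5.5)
Your proposal is correct and follows the paper's proof essentially verbatim: reduce to simple tensors by linearity, compute $(A\otimes U)(B\otimes D)(C\otimes U^*)=(ABC)\otimes(UDU^*)$, and use $\tau(UDU^*)=\tau(D)$ to match the right-hand side. Your explicit check of the mixed-product rule under the paper's block convention $A\otimes B=[B_{ij}A]$ is a small addition that the paper leaves implicit.
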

\begin{proof}
Since $\mn{n}\otimes\mn{k}$ is spanned by simple tensors and $\operatorname{id}\otimes\tau$ is linear, it suffices to verify the identity for a simple tensor $X=Y\otimes Z$, where $Y\in\mn{n}$ and $Z\in\mn{k}$.

For the left-hand side, we have
\[
(A\otimes U)(Y\otimes Z)(C\otimes U^*)
=
(AYC)\otimes(UZU^*).
\]
Applying $\operatorname{id}\otimes\tau$ and using the unitary invariance of the normalized trace, we obtain
\[
\begin{aligned}
(\operatorname{id}\otimes\tau)
\Big((A\otimes U)(Y\otimes Z)(C\otimes U^*)\Big)
&=(AYC)\otimes\tau(UZU^*)\\
&=(AYC)\otimes\tau(Z).
\end{aligned}
\]

On the other hand,
\[
(\operatorname{id}\otimes\tau)(Y\otimes Z)
=
Y\otimes\tau(Z),
\]
and hence
\[
\begin{aligned}
(A\otimes1)
\Big[(\operatorname{id}\otimes\tau)(Y\otimes Z)\Big]
(C\otimes1)
&=(A\otimes1)(Y\otimes\tau(Z))(C\otimes1)\\
&=(AYC)\otimes\tau(Z).
\end{aligned}
\]
Thus both sides agree on every simple tensor, and therefore, by linearity, the desired identity holds for all $X\in\mn{n}\otimes\mn{k}$.
\end{proof}
\begin{lem}\label{lem:uni-inv-muv}
   For unitaries $u,v,W\in\mn{n}$, we have
\[
M(Wu,Wv)
=
(W\otimes1)M(u,v)(W^*\otimes1).
\]
In particular,
\[
M(\mathbb{I}_2,u^*v)
=
M(u^*u,u^*v)
=
(u^*\otimes1)M(u,v)(u\otimes1).
\]
\end{lem}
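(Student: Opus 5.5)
We unwind the definition $M(u,v)=E_n(e^ue^v)-E_n(e_1)$, where $E_n=\mathrm{id}\otimes\frac1n\operatorname{Tr}$, and transport each of its two terms separately under the conjugation by $W\otimes\overline{W}$.

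\emph{First term.} By Corollary \ref{cor:equval-jones-2}, applied with $u$ replaced by $W$ and $v$ replaced by $Wu$ (respectively $Wv$), we have
\[
e^{Wu}=(W\otimes\overline{W})\,e^{u}\,(W\otimes\overline{W})^*,\qquad e^{Wv}=(W\otimes\overline{W})\,e^{v}\,(W\otimes\overline{W})^*.
\]
Equivalently, this follows from Corollary \ref{cor:equval-of-jones-proj} together with $(Wu)\otimes\overline{Wu}=(W\otimes\overline W)(u\otimes\overline u)$. Multiplying these and using that $W\otimes\overline W$ is unitary gives
\[
e^{Wu}e^{Wv}=(W\otimes\overline W)\,e^ue^v\,(W\otimes\overline W)^*.
\]
Now apply $E_n$. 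Lemma \ref{lem:prop-partial-tr}, with $A=W$, $C=W^*$, $U=\overline W$ (noting $\overline W^*=W^T$, which is what appears in $(W\otimes\overline W)^*$), yields
\[
E_n(e^{Wu}e^{Wv})=(W\otimes 1)\,E_n(e^ue^v)\,(W^*\otimes1).
\]
Lemma \ref{lem:partial_trace_property} gives the same identity directly.

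\emph{Second term.} We must check that $E_n(e_1)$ is also conjugation-covariant. From the proof of Lemma \ref{lem:eu-e-1}, $E_n(e_1)=\frac1{n^2}\mathbb I_n\otimes1$, which is a scalar. Hence
\[
(W\otimes1)\,E_n(e_1)\,(W^*\otimes1)=E_n(e_1).
\]
Subtracting the two transformed terms gives $M(Wu,Wv)=(W\otimes1)M(u,v)(W^*\otimes1)$.

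For the ``in particular'' part, we take $W=u^*$ and note $u^*u=\mathbb I_n$. (The statement writes $\mathbb I_2$, presumably meaning $\mathbb I_n$.) This gives
\[
M(\mathbb I_n,u^*v)=M(u^*u,u^*v)=(u^*\otimes1)M(u,v)(u\otimes1).
\]

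The only delicate point is bookkeeping in the partial-trace step. One must confirm that the second tensor factor of $(W\otimes\overline W)^*$ is exactly $(\overline W)^*$, so that Lemma \ref{lem:prop-partial-tr} applies with a genuine unitary $U=\overline W$. One must also keep track of the identification $\Phi$ of Remark \ref{rem:jones-proj-e-1}, so that $E_n$ is indeed the partial trace on the second factor. Everything else is formal.
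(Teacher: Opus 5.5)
Your proof is correct and follows essentially the paper's route. You establish $e^{Wu}e^{Wv}=(W\otimes\overline W)\,e^ue^v\,(W\otimes\overline W)^*$ and then push it through the partial trace via Lemma \ref{lem:prop-partial-tr}; the paper recomputes the conjugation from the formula in Proposition \ref{prop:jones-proj} rather than citing Corollary \ref{cor:equval-jones-2}. Your explicit check that $E_n(e_1)=\frac{1}{n^2}\mathbb{I}_n\otimes 1$ is scalar, and hence fixed by conjugation with $W\otimes 1$, fills in a step the paper leaves implicit when it simply says ``subtracting $E_n(e_1)$''.
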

\begin{proof}
    The new column projections for $u$ are:
    $$p'_k = (Wu) E_{kk} (Wu)^* = W (u E_{kk} u^*) W^* = W p_k W^*$$
    
    The complex conjugate transforms as $\overline{p'_k} = \overline{W} \overline{p_k} \overline{W}^*$.
    
    Substitute these into the tensor formula for $e^{Wu}$:
    $$e^{Wu} = \sum_{k=1}^n (W p_k W^*) \otimes (\overline{W} \overline{p_k} \overline{W}^*) = (W \otimes \overline{W}) e^u (W^* \otimes \overline{W}^*)$$
    
    Because $(W^* \otimes \overline{W}^*)(W \otimes \overline{W}) = \mathbb{I}_{n^2}$, multiplying $e^{Wu}$ and $e^{Wv}$ gives:
    $$e^{Wu} e^{Wv} = (W \otimes \overline{W}) e^u e^v (W^* \otimes \overline{W}^*)$$
    
    Now, applying Lemma \ref{lem:prop-partial-tr} on the partial trace $E_n$:
    \begin{align*}
        E_n(e^{Wu} e^{Wv}) &= (\text{id} \otimes \tau)\Big[ (W \otimes \overline{W}) (e^u e^v) (W^* \otimes \overline{W}^*) \Big] \\
        &=(W\otimes 1) \Big[ (\text{id} \otimes \tau)(e^u e^v) \Big] (W^*\otimes 1) = (W\otimes 1) E_n(e^u e^v) (W^*\otimes 1)
    \end{align*}
    
    Subtracting $E_n(e_1)$, we get $M(Wu, Wv) = (W\otimes 1) M(u, v) (W^*\otimes 1)$.
\end{proof}

\begin{theorem}[Unitary Invariance of Interior Angle]\label{thm:uni-inv-angle}
Let $u,v\in\sU(\mn{n})$ be arbitrary unitary matrices. Then
\[
\alpha\left(u\Delta^{(n)}u^*,v\Delta^{(n)}v^*\right)
=
\alpha\left(\Delta^{(n)},u^*v\Delta^{(n)}v^*u\right).
\]
\end{theorem}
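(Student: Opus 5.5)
The plan is to reduce the statement to Lemma \ref{lem:angle-formula} and Lemma \ref{lem:uni-inv-muv}. By Lemma \ref{lem:angle-formula}, for any pair of unitaries $(u,v)$ we have
\[
\cos\alpha\left(u\Delta^{(n)}u^*,v\Delta^{(n)}v^*\right)=\frac{n^2}{n-1}\,\|M(u,v)\|_{\mathrm{op}},
\]
where $M(u,v)=E_n(e^ue^v)-E_n(e_1)$. This formula applies equally to the pair $(\mathbb{I}_n,u^*v)$, since $\Delta^{(n)}=\mathbb{I}_n\Delta^{(n)}\mathbb{I}_n$ and $u^*v\Delta^{(n)}v^*u$ is again a masa of the required form. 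The normalising denominator $\|e^u-e_1\|\,\|e^v-e_1\|=\frac{n-1}{n^2}$ from Lemma \ref{lem:eu-e-1} is independent of the unitaries. So it suffices to prove
\[
\|M(u,v)\|_{\mathrm{op}}=\|M(\mathbb{I}_n,u^*v)\|_{\mathrm{op}}.
\]

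For this I will apply Lemma \ref{lem:uni-inv-muv} with $W=u^*$, which gives
\[
M(\mathbb{I}_n,u^*v)=M(u^*u,u^*v)=(u^*\otimes 1)\,M(u,v)\,(u\otimes 1).
\]
The lemma is stated with $\mathbb{I}_2$ in its "in particular" clause, but its proof works verbatim for $\mathbb{I}_n$. Since $u\otimes 1$ is a unitary in $\mn{n}\otimes\C\cong\mn{n}$, conjugation by it is isometric for the operator norm, which yields the desired equality of norms.

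Finally, both angles lie in $[0,\pi/2]$, where cosine is injective, so equality of their cosines gives equality of the angles. I would also add a brief check that the Jones projection $e_1$ for $\C\otimes\mathbb{I}_n\subseteq\mn{n}$ is the same in both computations. This holds because $E_0$ does not depend on $u$, and it is implicitly what makes the subtraction of $E_n(e_1)$ in Lemma \ref{lem:uni-inv-muv} consistent.

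The only real subtlety is the identification of the inner product $\langle e^u-e_1,e^v-e_1\rangle_{\sA}$ with $M(u,v)$ in Lemma \ref{lem:angle-formula}. This uses $e_1e^v=e_1=e^ue_1$ from Proposition \ref{prop:gupta-jones-proj}(iii), so that $(e^u-e_1)(e^v-e_1)=e^ue^v-e_1$. Since that identification is already established, no further obstacle is expected.
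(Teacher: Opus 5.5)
Your proposal is correct and matches the paper's proof essentially verbatim: you reduce to Lemma~\ref{lem:angle-formula}, apply Lemma~\ref{lem:uni-inv-muv} with $W=u^*$ to get $M(\mathbb{I}_n,u^*v)=(u^*\otimes 1)M(u,v)(u\otimes 1)$, and conclude from the fact that conjugation by the unitary $u\otimes 1$ preserves the operator norm. Your added remarks are also right: the $\mathbb{I}_2$ in that lemma's ``in particular'' clause should read $\mathbb{I}_n$, and injectivity of cosine on $[0,\pi/2]$ is what passes from equal cosines to equal angles.
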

\begin{proof}
Indeed, by Lemma \ref{lem:angle-formula} and Lemma \ref{lem:uni-inv-muv},

    \begin{align*}
        \cos \alpha(u\Delta^{(n)} u^*, v\Delta^{(n)} v^*) &=\frac{n^2}{n-1} \Vert{}M(u,v)\Vert{}_{\mathrm{op}} \\
        &= \frac{n^2}{n-1} \Vert{}(u^*\otimes 1)M(u,v) (u\otimes 1)\Vert{}_{\mathrm{op}} \\
        &= \frac{n^2}{n-1} \Vert{}M(\mathbb{I}_n, u^*v)\Vert{}_{\mathrm{op}} 
        =\cos \alpha(\Delta^{(n)}, u^*v \Delta^{(n)} v^*u).
    \end{align*}
\end{proof}

\noindent Using this above result, we can generalise Corollary \ref{cor:relation_angle_distance} as follows:
\begin{theorem}\label{thm:dkk-is-sin}
    Let $u, v$ be two $2\times 2$ unitary matrices. Then
    \[
        \dkk(u\Delta u^*, v\Delta v^*) = \sin \alpha(u\Delta u^*, v\Delta v^*).
    \]
\end{theorem}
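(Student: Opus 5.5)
The identity reduces to the special case already settled in Corollary \ref{cor:relation_angle_distance}, using the two unitary invariance results proved earlier. Set $w:=u^*v\in\sU(\mn{2})$. By Theorem \ref{prop:dkk-gen-masa} with $n=2$,
\[
\dkk(u\Delta u^*,v\Delta v^*)=\dkk(\Delta,w\Delta w^*).
\]
By Theorem \ref{thm:uni-inv-angle} with $n=2$,
\[
\alpha(u\Delta u^*,v\Delta v^*)=\alpha(\Delta,w\Delta w^*).
\]
Both sides of the claimed identity are therefore unchanged when the pair $(u,v)$ is replaced by $(\mathbb{I}_2,w)$.

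Next, apply Corollary \ref{cor:relation_angle_distance} to the single unitary $w$. It gives $\dkk(\Delta,w\Delta w^*)=\sin\alpha(\Delta,w\Delta w^*)$. This corollary itself combines Theorem \ref{thm:dkk-2-case}, which gives $\dkk=2|w_{11}w_{12}|$, with the corrected angle formula of Proposition \ref{prop:gupta-modified}, which gives $\cos\alpha=\sqrt{1-(2|w_{11}w_{12}|)^2}$. Since $\alpha\in[0,\pi/2]$, we have $\sin\alpha\ge 0$, so $\sin\alpha=2|w_{11}w_{12}|$. Chaining the three equalities yields the statement.

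As a consistency check, I will also write out the explicit value. The entries of $w$ are $w_{11}=\bar u_{11}v_{11}+\bar u_{21}v_{21}$ and $w_{12}=\bar u_{11}v_{12}+\bar u_{21}v_{22}$. So both sides equal the expression in Corollary \ref{cor:d_KK_between_masa}.

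There is essentially no obstacle, since the work lies in the earlier results. The one point to verify is that Theorem \ref{thm:uni-inv-angle} is stated with the same normalisation of $\alpha$ used in Proposition \ref{prop:gupta-modified}: the interior angle relative to $E_0$ on $\C\otimes\mathbb{I}_2\subseteq\mn{2}$, with $E_u$ as in \eqref{eq:unitary-conditional}. The second point is the sign choice when passing from $\cos\alpha$ to $\sin\alpha$, which is handled by the convention $\alpha\in[0,\pi/2]$ in Definition \ref{def:interior_angle}.
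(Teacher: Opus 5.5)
Your proposal is correct and follows the paper's own argument: the paper's proof also combines Theorem~\ref{prop:dkk-gen-masa}, Theorem~\ref{thm:uni-inv-angle} and Corollary~\ref{cor:relation_angle_distance}, reducing to the pair $(\Delta, w\Delta w^*)$ with $w=u^*v$. Your added check of the sign of $\sin\alpha$ and of the entries of $u^*v$ is consistent with Corollary~\ref{cor:d_KK_between_masa}.
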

\begin{proof}
    Follows immediately from Theorem \ref{prop:dkk-gen-masa}, Corollary \ref{cor:relation_angle_distance} and Theorem \ref{thm:uni-inv-angle}.
\end{proof}

\begin{cor}\label{cor:dkk-is-sin-of-angle}
    For any two intermediate subalgebras $\C\otimes \mathbb{I}_2 \subsetneq\sA, \sB\subsetneq \mn{2}$, we have
    \[
    \dkk(\sA, \sB) = \sin \alpha(\sA, \sB).
    \]
\end{cor}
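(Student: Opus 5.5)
The plan is to reduce the corollary to Theorem \ref{thm:dkk-is-sin} by showing that every proper intermediate subalgebra $\C\otimes\mathbb{I}_2\subsetneq\sA\subsetneq\mn{2}$ is a masa of the form $u\Delta u^*$ for some unitary $u\in\sU(\mn{2})$.

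\textbf{Step 1: every such $\sA$ is a masa.} Here ``subalgebra'' means a unital $C^*$-subalgebra. This is implicit in the paper's setting, since these are the subalgebras for which $\alpha$ is defined through $\operatorname{IMS}$.

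A unital $C^*$-subalgebra of $\mn{2}$ is a finite-dimensional $C^*$-algebra, hence a direct sum of full matrix algebras. The dimension must satisfy $1<\dim\sA<4$, so $\dim\sA\in\{2,3\}$.

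Dimension $3$ is impossible. A direct sum of matrix algebras of total dimension $3$ is $\C^3$, since $\mn{2}$ alone has dimension $4$. But $\C^3$ has three orthogonal minimal projections summing to $1$, whereas $\mn{2}$ has at most two orthogonal nonzero projections.

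Hence $\dim\sA=2$ and $\sA\cong\C^2$. Then $\sA$ is spanned by $1$ and a projection $p$ with $p\neq 0,1$, so $p$ has rank one. Choose a unitary $u$ whose first column spans $p\C^2$. Then $p=uE_{11}u^*$, so $\sA=\spn\{uE_{11}u^*,uE_{22}u^*\}=u\Delta u^*$. The same argument gives $\sB=v\Delta v^*$.

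\textbf{Step 2: conclude.} With $\sA=u\Delta u^*$ and $\sB=v\Delta v^*$, Theorem \ref{thm:dkk-is-sin} gives
\[
\dkk(\sA,\sB)=\sin\alpha(\sA,\sB)
\]
directly. One must also check that these subalgebras lie in $\operatorname{IMS}(\C\otimes\mathbb{I}_2,\mn{2},E_0)$, so that the angle is the one computed in Section~\ref{sec:mn}. This is the example recorded right after the definition of $\operatorname{IMS}$, using the conditional expectation $E_u$ of \eqref{eq:unitary-conditional}.

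\textbf{Main obstacle.} The only substantive step is the classification in Step 1, in particular ruling out dimension $3$. The projection-counting argument above settles it. If one prefers to avoid the structure theory of finite-dimensional $C^*$-algebras, there is an alternative: any non-scalar self-adjoint $a\in\sA$ has two distinct eigenvalues, so its spectral projections lie in $\sA$ and generate a masa contained in $\sA$. If $\sA$ contained anything more, it would have a nontrivial commutant failure and would be all of $\mn{2}$.
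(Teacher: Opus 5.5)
Your proposal is correct and takes the same route as the paper. The paper notes that every proper intermediate unital $C^*$-subalgebra of $\C\otimes\mathbb{I}_2\subseteq\mn{2}$ is a unitary conjugate of $\Delta$ and then invokes Theorem~\ref{thm:dkk-is-sin}; your dimension count ($\dim\sA\in\{2,3\}$, with $\C^3$ ruled out by counting orthogonal projections) supplies the classification step the paper leaves implicit. Your closing ``alternative'' argument is too vague as written (``nontrivial commutant failure'' is not an argument), but you do not need it, since Step~1 is already complete.
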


\begin{proof}
    This follows from Theorem \ref{thm:dkk-is-sin}, since every such unital $C^*$-subalgebra of $\C \otimes \mathbb{I}_2 \subseteq \mn{2}$ is, up to unitary conjugation, a masa.
\end{proof}

\begin{prop}\label{prop:muv-calc}
    For $u,v\in\sU(\mn{n})$, let $M(u,v)$ be as in Lemma \ref{lem:angle-formula}. Then
\[
M(u,v)
=
\left[
\frac{1}{n}\,
u\left(
|u^*v|^{\circ 2}\circ(u^*v)
\right)v^*
-\frac{1}{n^2}\mathbb{I}_n
\right]\otimes1.
\]
where $\circ$ denotes the Hadamard product between two matrices.
\end{prop}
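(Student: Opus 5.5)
The plan is to compute $E_n(e^ue^v)$ directly from the tensor description of the Jones projections in Proposition \ref{prop:jones-proj}. Then we subtract $E_n(e_1)=\frac{1}{n^2}\mathbb{I}_n\otimes 1$, which was already computed in the proof of Lemma \ref{lem:eu-e-1}.

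Write $p_k=uE_{kk}u^*$ and $q_l=vE_{ll}v^*$. Let $u_k$ and $v_l$ denote the $k$-th column of $u$ and the $l$-th column of $v$, so that $p_k=u_ku_k^*$ and $q_l=v_lv_l^*$. Set $W:=u^*v$, so that $W_{kl}=u_k^*v_l$. By Proposition \ref{prop:jones-proj},
\[
e^ue^v=\sum_{k,l=1}^n (p_kq_l)\otimes(\overline{p_k}\,\overline{q_l}).
\]
Applying $E_n=\mathrm{id}\otimes\frac1n\operatorname{Tr}$ term by term gives
\[
E_n(e^ue^v)=\frac1n\sum_{k,l}\operatorname{Tr}(\overline{p_kq_l})\,p_kq_l\otimes 1 .
\]

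Next we evaluate the two factors in each term. For the scalar factor,
\[
\operatorname{Tr}(\overline{p_kq_l})=\overline{\operatorname{Tr}(p_kq_l)}=\overline{|u_k^*v_l|^2}=|W_{kl}|^2 .
\]
For the matrix factor,
\[
p_kq_l=u_k(u_k^*v_l)v_l^*=W_{kl}\,u_kv_l^*=W_{kl}\,uE_{kl}v^* .
\]
Combining these,
\[
E_n(e^ue^v)=\frac1n\,u\Big(\sum_{k,l}|W_{kl}|^2W_{kl}E_{kl}\Big)v^*\otimes1=\frac1n\,u\big(|W|^{\circ2}\circ W\big)v^*\otimes 1 .
\]
Subtracting $E_n(e_1)=\frac1{n^2}\mathbb{I}_n\otimes1$ then yields the stated formula for $M(u,v)$.

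The only point that needs care is the conjugation and transpose convention, namely the identification $(A\otimes B)(x)=AxB^T$ from Theorem \ref{thm:basic-c-ten-I}. Under this identification, multiplication in $\mn{n}\otimes\mn{n}$ is factorwise, so $(p_k\otimes\overline{p_k})(q_l\otimes\overline{q_l})=p_kq_l\otimes\overline{p_k}\,\overline{q_l}$. The partial trace of the second factor then equals $|W_{kl}|^2$ rather than some other phase-dependent quantity, because a trace of the form $\operatorname{Tr}(\overline{p_kq_l})$ is real here. I expect checking this bookkeeping to be the main (mild) obstacle; the rest is routine.
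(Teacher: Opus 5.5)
Your proof is correct, and its core is the same computation as the paper's. In both, you expand the product of the two Jones projections from Proposition \ref{prop:jones-proj} as a double sum of simple tensors, take the partial trace of the second factor to get $|W_{kl}|^2$, and recognise the first factor as $W_{kl}$ times a matrix unit sandwiched between $u$ and $v^*$.

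The only difference is a preliminary reduction. The paper first reduces to the case $u=\mathbb{I}_n$, writing $E_n(e^ue^v)=(u\otimes 1)\,E_n\big(e^{\mathbb{I}_n}e^{u^*v}\big)\,(u\otimes 1)^*$ via Corollary \ref{cor:equval-jones-2} and Lemma \ref{lem:partial_trace_property}. It then computes with $E_{kk}wE_{ll}=w_{kl}E_{kl}$.

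You skip that reduction by working directly with the rank-one projections, using $p_kq_l=W_{kl}\,u_kv_l^*=W_{kl}\,uE_{kl}v^*$ and $\operatorname{Tr}(p_kq_l)=|W_{kl}|^2$. This is slightly shorter and needs nothing beyond Proposition \ref{prop:jones-proj} and the factorwise multiplication under $\Phi$, which you correctly justify.
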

\begin{proof}
    First note that using Corollary \ref{cor:equval-jones-2} and  Lemma \ref{lem:partial_trace_property},
    $$E_n(e^u e^v) = \left(\mathrm{id} \otimes \frac{1}{n}\operatorname{Tr}\right)\Big((u \otimes \bar{u}) \left( e^{\mathbb{I}_n} e^{u^* v} \right) (u \otimes \bar{u})^*\Big) = (u\otimes 1) \left[ \left(\mathrm{id} \otimes \frac{1}{n}\operatorname{Tr}\right)\left( e^{\mathbb{I}_n} e^{u^* v} \right) \right] (u\otimes 1)^*$$

    Write $w:=u^*v$. We get using Proposition \ref{prop:jones-proj}
    $$e^{\mathbb{I}_n} e^w = \sum_{k=1}^n \sum_{l=1}^n \Big( E_{kk} w E_{ll} w^* \Big) \otimes \Big( E_{kk} \bar{w} E_{ll} w^T \Big)$$

    Now applying the linear map $(\mathrm{id} \otimes \frac{1}{n}\operatorname{Tr})$:
    $$\left(\mathrm{id} \otimes \frac{1}{n}\operatorname{Tr}\right)\left( e^{\mathbb{I}_n} e^w \right) = \frac{1}{n} \sum_{k=1}^n \sum_{l=1}^n \Big( E_{kk} w E_{ll} w^* \Big) \otimes \operatorname{Tr}\Big( E_{kk} \bar{w} E_{ll} w^T \Big)$$

    Using the tracial property of the trace on the scalar coefficient:
    $$
    \operatorname{Tr}\Big( E_{kk} \bar{w} E_{ll} w^T \Big) = \operatorname{Tr}\Big( (E_{kk} \bar{w} E_{ll}) (w^T E_{kk}) \Big) = \bar{w}_{kl} \cdot (w^T)_{lk} = \bar{w}_{kl} \cdot w_{kl} = \vert{}w_{kl}\vert{}^2
    $$

    Therefore, 
    $$
    \begin{aligned} \left(\mathrm{id} \otimes \frac{1}{n}\operatorname{Tr}\right)\left( e^{\mathbb{I}_n} e^w \right) &= \frac{1}{n} \sum_{k=1}^n \sum_{l=1}^n \left(\vert{}w_{kl}\vert{}^2 \Big( E_{kk} w E_{ll} w^* \Big)\otimes 1\right) \\ &= \frac{1}{n} \left( \sum_{k=1}^n \sum_{l=1}^n \vert{}w_{kl}\vert{}^2 E_{kk} w E_{ll} \right) w^* \otimes 1 \end{aligned}
    $$
    Since $E_{kk} w E_{ll} = w_{kl} E_{kl}$:
    $$
    \begin{aligned} E_n(e^{\mathbb{I}_n} e^{u^*v} )=\left(\mathrm{id} \otimes \frac{1}{n}\operatorname{Tr}\right)\left( e^{\mathbb{I}_n} e^{w} \right) 
    &= \frac{1}{n} \left( \sum_{k=1}^n \sum_{l=1}^n \vert{}w_{kl}\vert{}^2 w_{kl} E_{kl} \right) w^* \otimes 1= \frac{1}{n} \Big( \vert{}w\vert{}^{\circ 2} \circ w \Big) w^* \otimes 1
    \end{aligned}
    $$
    where $\circ$ denotes the Hadamard (entrywise) product.

    Thus we get:
    \begin{align*}
        E_n(e^u e^v) = (u\otimes 1) E_n(e^{\mathbb{I}_n} e^{u^*v} ) (u\otimes 1)^* =\frac{1}{n}  u \left( |u^*v|^{\circ 2} \circ (u^*v)\right) v^* \otimes 1.
    \end{align*}
    Hence,
   \begin{align*}
     M(u, v) = E_{n}(e^ue^v)-E_{n}(e_1)& = 
     \left[\frac{1}{n}  u \left( |u^*v|^{\circ 2} \circ (u^*v)\right) v^* \otimes 1\right] - \left[\frac{1}{n^2} \mathbb{I}_n \otimes 1\right]\\
     &= \left[\frac{1}{n}  u \left( |u^*v|^{\circ 2} \circ (u^*v)\right) v^*
    - \frac{1}{n^2} \mathbb{I}_n \right] \otimes 1.   
   \end{align*}
\end{proof}

\begin{theorem}\label{thm:angle-betwn-masas}
   Let $u,v\in\mn{n}$ be two $n\times n$ unitary matrices. Then
\[
\cos\alpha\left(u\Delta^{(n)}u^*,v\Delta^{(n)}v^*\right)
=
\frac{n}{n-1}
\left\|
\left[
|u^*v|^{\circ 2}\circ(u^*v)
-\frac{1}{n}u^*v
\right]
\right\|_{\mathrm{op}}.
\]
\end{theorem}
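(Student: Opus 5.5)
The argument combines Lemma \ref{lem:angle-formula} with the explicit formula for $M(u,v)$ in Proposition \ref{prop:muv-calc}, and then strips off unitary factors using unitary invariance of the operator norm.

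First, by Lemma \ref{lem:angle-formula},
\[
\cos\alpha\left(u\Delta^{(n)}u^*,v\Delta^{(n)}v^*\right)=\frac{n^2}{n-1}\|M(u,v)\|_{\mathrm{op}}.
\]
Proposition \ref{prop:muv-calc} gives $M(u,v)=Y\otimes 1$ with
\[
Y=\frac{1}{n}u\left(|u^*v|^{\circ 2}\circ(u^*v)\right)v^*-\frac{1}{n^2}\mathbb{I}_n .
\]
Since $\mn{n}\otimes\C=\mn{n}$ under the paper's identification, we have $\|Y\otimes 1\|_{\mathrm{op}}=\|Y\|_{\mathrm{op}}$.

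Second, I rewrite the identity term as $\mathbb{I}_n=u(u^*v)v^*$, which is valid because $u^*u=\mathbb{I}_n=vv^*$. This gives
\[
Y=\frac{1}{n}\,u\left[|u^*v|^{\circ 2}\circ(u^*v)-\frac{1}{n}u^*v\right]v^*.
\]
Because $u$ and $v^*$ are unitary, left multiplication by $u$ and right multiplication by $v^*$ preserve the operator norm. Hence
\[
\|Y\|_{\mathrm{op}}=\frac{1}{n}\left\|\,|u^*v|^{\circ 2}\circ(u^*v)-\frac{1}{n}u^*v\,\right\|_{\mathrm{op}}.
\]

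Third, substituting this into the first display, the prefactor becomes $\frac{n^2}{n-1}\cdot\frac{1}{n}=\frac{n}{n-1}$, which is exactly the stated formula.

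There is no genuine obstacle, since Proposition \ref{prop:muv-calc} already contains all the computation. The one step to record explicitly is the factorisation of the $\frac{1}{n^2}\mathbb{I}_n$ term as $u(u^*v)v^*$. That factorisation is what lets the unitary factors be pulled out of both terms simultaneously. Alternatively, one could first reduce to $u=\mathbb{I}_n$ via Theorem \ref{thm:uni-inv-angle} and then apply the same step.
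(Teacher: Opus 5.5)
Your proposal is correct and is essentially the paper's own proof: the paper also takes $M(u,v)$ from Proposition~\ref{prop:muv-calc}, absorbs the scalar term as $\frac{1}{n^2}u(u^*v)v^*$, strips off $u$ and $v^*$ by unitary invariance of the operator norm to get $\|M(u,v)\|_{\mathrm{op}}=\frac{1}{n}\bigl\|\,|u^*v|^{\circ 2}\circ(u^*v)-\frac{1}{n}u^*v\,\bigr\|_{\mathrm{op}}$, and concludes via Lemma~\ref{lem:angle-formula}. One small nit: the factorisation $\mathbb{I}_n=u(u^*v)v^*$ uses $uu^*=\mathbb{I}_n=vv^*$ rather than $u^*u=\mathbb{I}_n$, though this is immaterial since $u$ is unitary.
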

\begin{proof}
    Indeed using Proposition \ref{prop:muv-calc}, 
    \begin{align*}
\|M(u, v)\|_{\operatorname{op}} &=  \left\| \left[\frac{1}{n}  u \left( |u^*v|^{\circ 2} \circ (u^*v)\right) v^*
    - \frac{1}{n^2} \mathbb{I}_n \right] \otimes 1\right\|_{\operatorname{op}}\\
 & = \frac{1}{n}\left\|u\left[\left(|u^*v|^{\circ 2} \circ (u^*v)\right) - \frac{1}{n} u^*v \right]v^*\right\|_{\operatorname{op}}\\
 & =  \frac{1}{n}\left\|\left[\left(|u^*v|^{\circ 2} \circ (u^*v)\right) - \frac{1}{n} u^*v \right]\right\|_{\operatorname{op}}
\end{align*}
Rest follows from Lemma \ref{lem:angle-formula}.
\end{proof}

\begin{remark}\label{rem:angle-not-good}
It was shown in \cite[Proposition 2.3]{bakshi-etal-2019} that
$\alpha(\sP,\sQ)=0$ if and only if $\sP=\sQ$ for any two $II_1$-subfactors of a given factor.
This result supports the intuition that $\alpha$ behaves like a genuine
notion of ``angle.'' The same phenomenon occurs for the angle between
$\Delta$ and $u\Delta u^*$ in $\mn{2}$; see
\cite[Corollary 4.5-(2)]{gupta2024}.

However, this intuition fails in general for subalgebras of
$C^*$-algebras. To illustrate this, consider the unitary matrix
\[
    u =
    \begin{pmatrix}
        1 & 0 & 0 \\
        0 & \frac{\sqrt{3}}{2} & \frac{1}{2} \\
        0 & -\frac{1}{2} & \frac{\sqrt{3}}{2}
    \end{pmatrix}.
\]
Using the formula in Theorem~\ref{thm:angle-betwn-masas}, we obtain
\[
    \alpha\left(\Delta^{(3)},u\Delta^{(3)}u^*\right)
    = \cos^{-1}(1) = 0.
\]
Thus, even though $\Delta^{(3)}\neq u\Delta^{(3)}u^*$, their angle is
zero. Moreover, the same kind of phenomenon occurs in every $\mn{n}$ for $n\geq 3$.

\end{remark}

\begin{definition}[Flat Unitary]
Let $u = (u_{jk})_{j,k=1}^n \in \mn{n}$ be a unitary matrix, and let $\Vec{u}_k$ denote the $k$-th column vector of $u$ for each $k \in \{1, \dots, n\}$. We say that $u$ is a \emph{flat unitary} if every non-zero entry within a given column has the exact same magnitude. Equivalently, $u$ is a flat unitary if for all $j, k \in \{1, \dots, n\}$,$$\vert{}u_{jk}\vert{} \in \left\{ 0, \frac{1}{\sqrt{\|\Vec{u}_k\|_0}} \right\},$$
where $\|\Vec{u}_k\|_0$ is the Hamming number (see Definition \ref{def:hamming-num}) of the column vector $\Vec{u}_k$.
\end{definition}

\begin{prop}\label{prop:angle-flat-uni}
For a flat unitary matrix $u \in \mn{n}$ with $n \geq 2$, we have
    $$\cos\alpha\left(\Delta^{(n)},u\Delta^{(n)}u^*\right) =\max_{1 \leq k \leq n} \frac{n - \Vert \Vec{u}_k \Vert_0}{(n-1)\Vert \Vec{u}_k \Vert_0}.$$
    Consequently, the value of this angle must belong to the finite discrete set:
    $$\left\{ \cos^{-1}\left(\frac{n - d}{(n-1)d} \right)\ \Bigg\vert{}\ d \in \{1, 2, \dots, n\} \right\} \subset \left[0, \frac{\pi}{2}\right]$$
\end{prop}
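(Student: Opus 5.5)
The plan is to specialise the explicit formula of Theorem~\ref{thm:angle-betwn-masas} to the pair $(\mathbb{I}_n, u)$. That is, take the first unitary to be $\mathbb{I}_n$ and the second to be $u$, so that the relative unitary is $u$ itself. This gives
\[
\cos\alpha\left(\Delta^{(n)},u\Delta^{(n)}u^*\right)=\frac{n}{n-1}\left\|\,|u|^{\circ 2}\circ u-\frac{1}{n}u\,\right\|_{\mathrm{op}} .
\]
The key step is to identify the Hadamard product $|u|^{\circ 2}\circ u$ when $u$ is flat.

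Write $d_k:=\|\Vec{u}_k\|_0$ for the Hamming number of the $k$-th column. This is nonzero because $u$ is unitary, so every column has norm one. By flatness, each entry satisfies $|u_{jk}|^2\in\{0,1/d_k\}$. Hence $|u_{jk}|^2u_{jk}=u_{jk}/d_k$ for every $j,k$: where $u_{jk}=0$ both sides vanish. In other words, $|u|^{\circ 2}\circ u$ is obtained from $u$ by scaling column $k$ by $1/d_k$, so
\[
|u|^{\circ 2}\circ u=uD^{-1},\qquad D:=\operatorname{diag}(d_1,\dots,d_n).
\]
It follows that $|u|^{\circ 2}\circ u-\frac1n u=u\left(D^{-1}-\frac1n\mathbb{I}_n\right)$.

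Since $u$ is unitary, the operator norm is unchanged by left multiplication by $u$. The norm of a diagonal matrix is its largest entry in absolute value, so
\[
\left\|u\left(D^{-1}-\tfrac1n\mathbb{I}_n\right)\right\|_{\mathrm{op}}=\max_k\left|\frac1{d_k}-\frac1n\right|=\max_k\frac{n-d_k}{n\,d_k}.
\]
We may drop the absolute value because $d_k\le n$. Multiplying by $\frac{n}{n-1}$ yields the claimed formula. There is no real obstacle here; the only point needing care is the entrywise identity above, including the zero entries.

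For the consequence, the cosine equals $\frac{n-d}{(n-1)d}$ with $d=d_k$ for a maximising index $k$, and $d\in\{1,\dots,n\}$. Each such value lies in $[0,1]$: it equals $1$ at $d=1$ and $0$ at $d=n$, and it decreases in $d$. So $\alpha$ lies in the stated finite subset of $[0,\pi/2]$.
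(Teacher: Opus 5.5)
Your proposal is correct and follows essentially the same route as the paper. You specialise Theorem~\ref{thm:angle-betwn-masas} to the pair $(\mathbb{I}_n,u)$, use flatness to write $|u|^{\circ 2}\circ u-\frac{1}{n}u=u\Lambda$ with $\Lambda=\operatorname{diag}\left(\frac{1}{d_k}-\frac{1}{n}\right)$, and then use unitary invariance of the operator norm. The only cosmetic difference is in the second part: the paper locates the maximum at $d_{\min}$ via monotonicity of $x\mapsto\frac{n-x}{(n-1)x}$, while you just take a maximising index, which suffices.
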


\begin{proof}
    Let $M = \vert u \vert^{\circ 2} \circ u - \frac{1}{n}u$. Then, for all $j,k=1,2,\dots,n$,
    $$M_{jk} = \vert u_{jk}\vert^2 u_{jk} - \frac{1}{n} u_{jk}
    = u_{jk} \left( \vert u_{jk}\vert^2 - \frac{1}{n} \right).$$

    Since $u$ is flat, we have
    $$M_{jk} = u_{jk} \left( \frac{1}{\Vert \Vec{u}_k \Vert_0} - \frac{1}{n} \right).$$
    Thus, $M=u\Lambda$, where $\Lambda$ is the diagonal matrix
    $$\Lambda = \operatorname{diag}\left( \frac{1}{\Vert \Vec{u}_1 \Vert_0} - \frac{1}{n}, \dots, \frac{1}{\Vert \Vec{u}_n \Vert_0} - \frac{1}{n} \right).$$
    Since $u$ is unitary, we have
    $\|M\|_{\operatorname{op}} = \|u\Lambda\|_{\operatorname{op}} = \|\Lambda\|_{\operatorname{op}}$.
    Therefore,
    \[
        \frac{n}{n-1}\|M\|_{\operatorname{op}}
        =
        \max_{1\leq k\leq n}
        \frac{n-\|\Vec{u}_k\|_0}
        {(n-1)\|\Vec{u}_k\|_0},
    \]
    and the result follows from the formula for
    $\cos\alpha\left(\Delta^{(n)},u\Delta^{(n)}u^*\right)$.

    To prove the last part let $d_{\min} = \min_{1 \le k \le n} \Vert \Vec{u}_k \Vert_0$. Because the function $f(x) = \frac{n-x}{(n-1)x}$ is strictly decreasing for $x > 0$ we get:
    $$\max_{1 \le k \le n} \frac{n - \Vert \Vec{u}_k \Vert_0}{(n-1)\Vert \Vec{u}_k \Vert_0} = \frac{n - d_{\min}}{(n-1)d_{\min}}$$
    Since $d_{\min} \in \{1, 2, \dots, n\}$, the quantity evaluates exactly to an element of the stated discrete set.
\end{proof}

We now show that the maximum angle between two masas is attained when the \emph{relative unitary} is a Hadamard unitary. We note that the equivalence $(ii)\iff(iii)$ in the following result was previously known; see \cite[Remark 5.4]{Bakshi_gupta_2021}.
\begin{theorem}\label{thm:angle-hadamard-comm-sq}
  Let $u,v\in\mn{n}$ be two $n\times n$ unitary matrices. Then the following are equivalent:
\begin{itemize}
    \item[(i)] $u^*v$ is a Hadamard unitary.
    
    \item[(ii)] $\alpha\left(u\Delta^{(n)}u^*,v\Delta^{(n)}v^*\right)=\frac{\pi}{2}$.
    
    \item[(iii)] The following diagram is a commuting square:
    \[
    \begin{array}{ccc}
    u\Delta^{(n)}u^* & \subset & \mn{n} \\
    \cup && \cup \\
    \mathbb{C} & \subset & v\Delta^{(n)}v^*
    \end{array}
    \]
\end{itemize}
\end{theorem}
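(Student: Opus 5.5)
The plan is to prove (i)$\iff$(iii) using Proposition \ref{prop:exp-and-hada-uni}, and then (i)$\iff$(ii) using the explicit formula of Theorem \ref{thm:angle-betwn-masas}. The route (ii)$\iff$(iii) via \cite[Remark 5.4]{Bakshi_gupta_2021} is an alternative, but the direct argument keeps the proof self-contained.

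\textbf{Step 1: (i)$\iff$(iii).} By definition, the diagram is a commuting square exactly when $E_u\circ E_v=E_0=E_v\circ E_u$.
\begin{itemize}
\item[(a)] Proposition \ref{prop:exp-and-hada-uni} shows that $E_u\circ E_v=E_0$ holds if and only if $u^*v$ is a Hadamard unitary.
\item[(b)] Applying the same proposition with $u$ and $v$ interchanged, $E_v\circ E_u=E_0$ holds if and only if $v^*u=(u^*v)^*$ is Hadamard.
\item[(c)] The adjoint of a Hadamard unitary is again Hadamard, because it is still unitary and its entries have the same moduli $1/\sqrt n$ (Proposition \ref{prop:hadamard-uni}).
\end{itemize}
Together these give (i)$\iff$(iii).

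\textbf{Step 2: (i)$\iff$(ii).} Write $w=u^*v$. By Theorem \ref{thm:angle-betwn-masas}, $\alpha=\pi/2$ holds exactly when $\|\,|w|^{\circ2}\circ w-\frac1n w\|_{\mathrm{op}}=0$. That is, $M:=|w|^{\circ 2}\circ w-\frac1n w$ must be the zero matrix.

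Entrywise, $M_{jk}=w_{jk}\big(|w_{jk}|^2-\frac1n\big)$. So $M=0$ if and only if every entry satisfies $w_{jk}=0$ or $|w_{jk}|=1/\sqrt n$.

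If $w$ is Hadamard, then $M=0$ immediately, which gives (i)$\Rightarrow$(ii).

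\textbf{Main obstacle: the converse (ii)$\Rightarrow$(i).} Here we must rule out zero entries in $w$, using only the unitarity of $w$.
\begin{itemize}
\item[(a)] Suppose $M=0$. Then every nonzero entry of $w$ has modulus $1/\sqrt n$.
\item[(b)] Each column of $w$ is a unit vector, so $\sum_j|w_{jk}|^2=1$. This sum equals $d_k/n$, where $d_k$ is the number of nonzero entries in column $k$.
\item[(c)] Hence $d_k=n$ for every $k$. So all entries are nonzero with modulus $1/\sqrt n$, and $w$ is a Hadamard unitary by Proposition \ref{prop:hadamard-uni}.
\end{itemize}
Equivalently, when $M=0$ the matrix $w$ is a flat unitary with all Hamming numbers equal to $n$. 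This is consistent with Proposition \ref{prop:angle-flat-uni}, which gives $\cos\alpha=0$ exactly when $d_{\min}=n$.

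The one point to check carefully is that the normalisation in Theorem \ref{thm:angle-betwn-masas} leaves no additive constant, so that $\cos\alpha=0$ really does force the matrix $M$ to vanish. Since the formula is a positive multiple of an operator norm, this is clear.
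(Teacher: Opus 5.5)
Your proof is correct and follows essentially the paper's route. You get (i)$\iff$(iii) from Proposition \ref{prop:exp-and-hada-uni}, and (i)$\iff$(ii) from the formula in Theorem \ref{thm:angle-betwn-masas} plus the unit-norm counting argument; the paper instead derives (i)$\Rightarrow$(ii) via Proposition \ref{prop:angle-flat-uni} and Theorem \ref{thm:uni-inv-angle}, which is the same computation, and your explicit handling of both orders $E_u\circ E_v$ and $E_v\circ E_u$ is slightly more careful than the paper's one-line citation.
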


\begin{proof}
    \noindent($(ii)\implies (i)$). For this let $w: = u^*v$. By Theorem \ref{thm:angle-betwn-masas} we get
\[
|w|^{\circ 2} \circ w = \frac{1}{n} w,  \text{ that is, } \left(|w_{ij}|^2 - \frac{1}{n}\right) w_{ij} = 0 \text{ for }i,j = 1, \dots, n.
\]
Now since $\sum_{j} |w_{ij}|^2=1$ for all $i = 1, \dots, n$ we conclude $|w_{ij}| = \frac{1}{\sqrt{n}}$ for all $i, j= 1\dots, n$.

    \noindent ($(i)\implies(ii)$). First observe that every $n\times n$ Hadamard unitary $w$ is a flat unitary satisfying $\|\Vec{w}_k\|_0=n$ for every column $k=1,2,\dots,n$. Hence,
$\|\overrightarrow{(u^*v)_k}\|_0=n$ for all $k=1,2,\dots,n$. The result now follows from Proposition~\ref{prop:angle-flat-uni} and Theorem~\ref{thm:uni-inv-angle}.

    \noindent ($(i) \iff (iii)$). This follows from Proposition \ref{prop:exp-and-hada-uni}.
\end{proof}

It is known that, in the $2\times 2$ case, $\cos \alpha(-,-)$ can attain every value in the interval $[0,1]$ (see \cite[Corollary 4.6]{gupta2024}). We now extend this result to the general $n\times n$ case:

\begin{theorem}\label{thm:range_angle}
    For every $r\in[0,1]$, there exist unitaries $u,v\in\mn{n}$ such that
\[
\cos\alpha\left(u\Delta^{(n)}u^*,v\Delta^{(n)}v^*\right)=r.
\]
\end{theorem}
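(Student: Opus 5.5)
By Theorem~\ref{thm:uni-inv-angle} it suffices to take $u=\mathbb{I}_n$ and construct a single unitary $w\in\mn{n}$ for each $r$. By Theorem~\ref{thm:angle-betwn-masas},
\[
\cos\alpha\left(\Delta^{(n)},w\Delta^{(n)}w^*\right)=\frac{n}{n-1}\left\||w|^{\circ 2}\circ w-\tfrac{1}{n}w\right\|_{\mathrm{op}}.
\]
The plan is to connect a unitary with angle $0$ to a Hadamard unitary (angle $\pi/2$) by a continuous path of unitaries. Then the intermediate value theorem gives every intermediate value of $\cos\alpha$.

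\textbf{Step 1 (continuity).} The map
\[
w\mapsto \frac{n}{n-1}\left\||w|^{\circ 2}\circ w-\tfrac1n w\right\|_{\mathrm{op}}
\]
is continuous on $\sU(\mn{n})$, since it composes entrywise polynomials in $w,\bar w$ with the operator norm.

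\textbf{Step 2 (endpoints).} For $w=\mathbb{I}_n$, a flat unitary with all Hamming numbers $1$, Proposition~\ref{prop:angle-flat-uni} gives
\[
\cos\alpha=\frac{n-1}{n-1}=1 .
\]
For $w=F$, the Fourier matrix $F_{jk}=\frac{1}{\sqrt n}e^{2\pi i jk/n}$, which is a Hadamard unitary, Theorem~\ref{thm:angle-hadamard-comm-sq} gives $\cos\alpha=0$.

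\textbf{Step 3 (a path).} Since $\sU(\mn{n})$ is path connected, write $F=e^{iH}$ with $H$ self-adjoint and set
\[
w_t=e^{itH},\qquad t\in[0,1].
\]
Then $g(t):=\cos\alpha\left(\Delta^{(n)},w_t\Delta^{(n)}w_t^*\right)$ is continuous on $[0,1]$ with $g(0)=1$ and $g(1)=0$. By the intermediate value theorem, for each $r\in[0,1]$ there is $t$ with $g(t)=r$. Take $u=\mathbb{I}_n$ and $v=w_t$.

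There is no real obstacle here. The only point needing care is that the formula of Theorem~\ref{thm:angle-betwn-masas} is a genuinely continuous function of the relative unitary, which holds because the norm is continuous and the Hadamard-product expression is polynomial in the entries.

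As an explicit alternative, one could use $v=r_\theta\oplus\mathbb{I}_{n-2}$ and compute directly. However, the resulting $M=\mathrm{diag}(\cdot)$-type blocks give $\cos\alpha\ge\frac{n-1}{n(n-1)}\cdot n$ from the identity block, so the value $1$ is contributed by the identity part. This is the phenomenon of Remark~\ref{rem:angle-not-good}, and it shows such block choices do not sweep out $[0,1]$. That is why the connectedness argument, whose endpoint is a full Hadamard unitary, is preferable.
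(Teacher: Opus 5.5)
Your proposal is correct and is essentially the paper's own argument. The paper also reduces to $\cos\alpha(\Delta^{(n)},v\Delta^{(n)}v^*)$, treats the formula of Theorem~\ref{thm:angle-betwn-masas} as a continuous function on $\sU(\mn{n})$, evaluates it as $1$ at $\mathbb{I}_n$ and $0$ at the Fourier Hadamard unitary, and applies the intermediate value theorem via path connectedness (your explicit path $e^{itH}$ just makes this concrete), while your closing aside about $r_\theta\oplus\mathbb{I}_{n-2}$ is not needed and only has content for $n\ge 3$.
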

\begin{proof}
Indeed, we shall prove that there always exists a unitary matrix $v\in \mn{n}$ such that
\[
    \cos\alpha(\Delta^{(n)}, v\Delta^{(n)}v^*)=r.
\]

To this end, define the function
\begin{align*}
    f : \sU(\mn{n}) &\longrightarrow [0, \infty)\\
    v &\longmapsto \frac{n^2}{n-1}
    \left\|
        \frac{1}{n}\left(|v|^{\circ 2} \circ v\right)
        - \frac{1}{n^2}v
    \right\|_{\mathrm{op}}.
\end{align*}

Note that, by  $\frac{n^2}{n-1}
    \left\|
        \frac{1}{n}\left(|v|^{\circ 2} \circ v\right)
        - \frac{1}{n^2}v
    \right\|_{\mathrm{op}}=  \cos\alpha(\Delta^{(n)}, v\Delta^{(n)}v^*)$

Fix a Hadamard unitary $v_0\in \mn{n}$; for example, let
\[
    (v_0)_{jk}=\frac{1}{\sqrt{n}}e^{\frac{2\pi ijk}{n}}
\]
for all $j,k$. By Theorem \ref{thm:angle-hadamard-comm-sq}, we have $f(v_0)=0$. On the other hand, $f(\mathbb{I}_n)=1$.

Since $\sU(\mn{n})$ is path connected and $f$ is clearly continuous, the intermediate value theorem implies that, for every $r\in[0,1]$, there exists a unitary matrix $v\in \sU(\mn{n})$ such that $f(v)=r$. This completes the proof.
\end{proof}

We conclude this section by deriving an explicit formula for the angle between two masas in $\mn{2}$, using the unitary invariance of $\alpha$ established in Theorem \ref{thm:uni-inv-angle}.

\begin{theorem}
   Let $u,v\in\sU(\mn{2})$ be arbitrary $2\times2$ unitary matrices. Then
\[
\cos\alpha\left(u\Delta u^*,v\Delta v^*\right)
=
\sqrt{1-4\left|(u^*v)_{11}\right|^2\left|(u^*v)_{12}\right|^2}.
\]
\end{theorem}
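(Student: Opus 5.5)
The plan is to reduce to the case $u=\mathbb{I}_2$ via the unitary invariance of the interior angle (Theorem \ref{thm:uni-inv-angle}), and then apply the explicit $2\times 2$ formula of Proposition \ref{prop:gupta-modified}.

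Set $w:=u^*v\in\sU(\mn{2})$. By Theorem \ref{thm:uni-inv-angle},
\[
\alpha\left(u\Delta u^*,v\Delta v^*\right)=\alpha\left(\Delta,w\Delta w^*\right).
\]
Applying Proposition \ref{prop:gupta-modified} to the unitary $w=(\lambda_{ij})$ then gives
\[
\cos\alpha(\Delta,w\Delta w^*)=\sqrt{1-\left(2|w_{11}||w_{12}|\right)^2}=\sqrt{1-4|w_{11}|^2|w_{12}|^2}.
\]
This is exactly the claimed identity with $w_{11}=(u^*v)_{11}$ and $w_{12}=(u^*v)_{12}$.

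As an independent check, and as an alternative route if one prefers not to rely on the corrected version of \cite[Theorem 4.4]{gupta2024}, one can use Theorem \ref{thm:angle-betwn-masas} with $n=2$. Write $p=|w_{11}|^2$, so that $|w_{12}|^2=|w_{21}|^2=1-p$ and $|w_{22}|^2=p$. The matrix $|w|^{\circ2}\circ w-\frac12 w$ then has entries $w_{ij}(|w_{ij}|^2-\frac12)$, so it equals
\[
\left(p-\tfrac12\right)\begin{pmatrix} w_{11} & -w_{12}\\ -w_{21} & w_{22}\end{pmatrix}.
\]
The matrix on the right is unitary whenever $w$ is, because for a $2\times 2$ unitary $w$ it is $w\,\mathrm{diag}(1,-1)$ up to rearranging signs columnwise. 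In fact it is exactly $w\,\mathrm{diag}(1,-1)$ with the sign pattern $(+,-;-,+)$. To see this, note that $|w_{11}|=|w_{22}|$ and $|w_{12}|=|w_{21}|$, and the sign pattern flips entries in a way that preserves orthogonality of the columns; equivalently, it is $\mathrm{diag}(1,-1)\,w\,\mathrm{diag}(1,-1)$, which is clearly unitary. Its operator norm is therefore $1$, and the formula gives
\[
\cos\alpha=2\left|p-\tfrac12\right|=|2p-1|=\sqrt{1-4p(1-p)}.
\]

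The only mildly delicate step is identifying the sign-modified matrix as $\mathrm{diag}(1,-1)\,w\,\mathrm{diag}(1,-1)$, which is immediate entrywise. Everything else is the direct substitution described above.
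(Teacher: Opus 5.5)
Your main argument is exactly the paper's proof: you reduce to $\alpha(\Delta,w\Delta w^*)$ with $w=u^*v$ via Theorem~\ref{thm:uni-inv-angle} and then apply Proposition~\ref{prop:gupta-modified}. Your independent check through Theorem~\ref{thm:angle-betwn-masas} is also correct, except for the aside calling the sign-modified matrix ``exactly $w\,\mathrm{diag}(1,-1)$'', which is false because that product has sign pattern $(+,-;+,-)$; the identification you also give, $\mathrm{diag}(1,-1)\,w\,\mathrm{diag}(1,-1)$, is the right one and suffices.
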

\begin{proof}
  By Theorem \ref{thm:uni-inv-angle}, we have
\[
\cos\alpha(u\Delta u^*,v\Delta v^*)
=
\cos\alpha(\Delta,u^*v\Delta v^*u).
\]
Applying Proposition \ref{prop:gupta-modified} to the right-hand side yields the desired result.
\end{proof}

    \section{Aknowledgement}
    Sumit Kumar gratefully acknowledges Prof. Keshab Chandra Bakshi for providing him with the opportunity to work at IIT Kanpur under his guidance through Project No. SPO/ANRF/MATH/2025271. He also sincerely thanks his PhD advisor Prof. Ved Prakash Gupta for introducing him to this area of research.

	\medskip
	
	\bibliographystyle{amsalpha} 
	\bibliography{references}

@article {gupta2024,
    AUTHOR = {Gupta, Ved Prakash and Sharma, Deepika},
     TITLE = {On possible values of the interior angle between intermediate
              subalgebras},
   JOURNAL = {J. Aust. Math. Soc.},
  FJOURNAL = {Journal of the Australian Mathematical Society},
    VOLUME = {117},
      YEAR = {2024},
    NUMBER = {1},
     PAGES = {44--66},
      ISSN = {1446-7887,1446-8107},
   MRCLASS = {46L05 (47L40)},
  MRNUMBER = {4802770},
       DOI = {10.1017/S1446788723000058},
       URL = {https://doi.org/10.1017/S1446788723000058},
}

@book {JonesSundar97,
    AUTHOR = {Jones, V. and Sunder, V. S.},
     TITLE = {Introduction to subfactors},
    SERIES = {London Mathematical Society Lecture Note Series},
    VOLUME = {234},
 PUBLISHER = {Cambridge University Press, Cambridge},
      YEAR = {1997},
     PAGES = {xii+162},
      ISBN = {0-521-58420-5},
   MRCLASS = {46L37 (46-01 46L10 57M25)},
  MRNUMBER = {1473221},
MRREVIEWER = {Carl\ Winsl\o w},
       DOI = {10.1017/CBO9780511566219},
       URL = {https://doi.org/10.1017/CBO9780511566219},
}

@article {BakshiGuin2025,
    AUTHOR = {Bakshi, Keshab Chandra and Guin, Satyajit},
     TITLE = {Relative position between a pair of spin model subfactors},
   JOURNAL = {J. Aust. Math. Soc.},
  FJOURNAL = {Journal of the Australian Mathematical Society},
    VOLUME = {119},
      YEAR = {2025},
    NUMBER = {1},
     PAGES = {1--38},
      ISSN = {1446-7887,1446-8107},
   MRCLASS = {46L37 (37A35 46L10 46L55)},
  MRNUMBER = {4929087},
MRREVIEWER = {Mainak\ Ghosh},
       DOI = {10.1017/S1446788725000035},
       URL = {https://doi.org/10.1017/S1446788725000035},
}

@article {Choda2008,
    AUTHOR = {Choda, Marie},
     TITLE = {Relative entropy for maximal abelian subalgebras of matrices
              and the entropy of unistochastic matrices},
   JOURNAL = {Internat. J. Math.},
  FJOURNAL = {International Journal of Mathematics},
    VOLUME = {19},
      YEAR = {2008},
    NUMBER = {7},
     PAGES = {767--776},
      ISSN = {0129-167X,1793-6519},
   MRCLASS = {46L55 (46L35)},
  MRNUMBER = {2437069},
MRREVIEWER = {E.\ St\o rmer},
       DOI = {10.1142/S0129167X08004881},
       URL = {https://doi.org/10.1142/S0129167X08004881},
}

@article {Choda2011,
    AUTHOR = {Choda, Marie},
     TITLE = {Conjugate pairs of subfactors and entropy for automorphisms},
   JOURNAL = {Internat. J. Math.},
  FJOURNAL = {International Journal of Mathematics},
    VOLUME = {22},
      YEAR = {2011},
    NUMBER = {4},
     PAGES = {577--592},
      ISSN = {0129-167X,1793-6519},
   MRCLASS = {46L55 (46L37)},
  MRNUMBER = {2794462},
MRREVIEWER = {Junsheng\ Fang},
       DOI = {10.1142/S0129167X1100691X},
       URL = {https://doi.org/10.1142/S0129167X1100691X},
}

@article {watatani1990,
    AUTHOR = {Watatani, Yasuo},
     TITLE = {Index for {$C^*$}-subalgebras},
   JOURNAL = {Mem. Amer. Math. Soc.},
  FJOURNAL = {Memoirs of the American Mathematical Society},
    VOLUME = {83},
      YEAR = {1990},
    NUMBER = {424},
     PAGES = {vi+117},
      ISSN = {0065-9266,1947-6221},
   MRCLASS = {46L05 (19K14 46L40 46L80 46M20)},
  MRNUMBER = {996807},
MRREVIEWER = {Masatoshi\ Enomoto},
       DOI = {10.1090/memo/0424},
       URL = {https://doi.org/10.1090/memo/0424},
}

@article {Kadison_Kastler,
    AUTHOR = {Kadison, Richard V. and Kastler, Daniel},
     TITLE = {Perturbations of von {N}eumann algebras. {I}. {S}tability of
              type},
   JOURNAL = {Amer. J. Math.},
  FJOURNAL = {American Journal of Mathematics},
    VOLUME = {94},
      YEAR = {1972},
     PAGES = {38--54},
      ISSN = {0002-9327,1080-6377},
   MRCLASS = {46L10},
  MRNUMBER = {296713},
MRREVIEWER = {H.\ Halpern},
       DOI = {10.2307/2373592},
       URL = {https://doi.org/10.2307/2373592},
}

@article {Ch_1974,
    AUTHOR = {Christensen, Erik},
     TITLE = {Perturbations of type {I} von {N}eumann algebras},
   JOURNAL = {J. London Math. Soc. (2)},
  FJOURNAL = {Journal of the London Mathematical Society. Second Series},
    VOLUME = {9},
      YEAR = {1974/75},
     PAGES = {395--405},
      ISSN = {0024-6107,1469-7750},
   MRCLASS = {46L10},
  MRNUMBER = {358373},
MRREVIEWER = {Arthur\ Lieberman},
       DOI = {10.1112/jlms/s2-9.3.395},
       URL = {https://doi.org/10.1112/jlms/s2-9.3.395},
}

@article {Ch_1977,
    AUTHOR = {Christensen, Erik},
     TITLE = {Perturbations of operator algebras},
   JOURNAL = {Invent. Math.},
  FJOURNAL = {Inventiones Mathematicae},
    VOLUME = {43},
      YEAR = {1977},
    NUMBER = {1},
     PAGES = {1--13},
      ISSN = {0020-9910,1432-1297},
   MRCLASS = {46L10},
  MRNUMBER = {512367},
MRREVIEWER = {Ole\ A.\ Nielsen},
       DOI = {10.1007/BF01390201},
       URL = {https://doi.org/10.1007/BF01390201},
}

@article {Ch_1980,
    AUTHOR = {Christensen, Erik},
     TITLE = {Near inclusions of {$C\sp{\ast} $}-algebras},
   JOURNAL = {Acta Math.},
  FJOURNAL = {Acta Mathematica},
    VOLUME = {144},
      YEAR = {1980},
    NUMBER = {3-4},
     PAGES = {249--265},
      ISSN = {0001-5962,1871-2509},
   MRCLASS = {46L05 (46L10)},
  MRNUMBER = {573453},
MRREVIEWER = {Man-Duen\ Choi},
       DOI = {10.1007/BF02392125},
       URL = {https://doi.org/10.1007/BF02392125},
}

@article {Ch_et_al_2010,
    AUTHOR = {Christensen, Erik and Sinclair, Allan and Smith, Roger R. and
              White, Stuart},
     TITLE = {Perturbations of {$C^\ast$}-algebraic invariants},
   JOURNAL = {Geom. Funct. Anal.},
  FJOURNAL = {Geometric and Functional Analysis},
    VOLUME = {20},
      YEAR = {2010},
    NUMBER = {2},
     PAGES = {368--397},
      ISSN = {1016-443X,1420-8970},
   MRCLASS = {46L05},
  MRNUMBER = {2671282},
       DOI = {10.1007/s00039-010-0070-y},
       URL = {https://doi.org/10.1007/s00039-010-0070-y},
}

@article {Bakshi_gupta_2021,
    AUTHOR = {Bakshi, Keshab Chandra and Gupta, Ved Prakash},
     TITLE = {Lattice of intermediate subalgebras},
   JOURNAL = {J. Lond. Math. Soc. (2)},
  FJOURNAL = {Journal of the London Mathematical Society. Second Series},
    VOLUME = {104},
      YEAR = {2021},
    NUMBER = {5},
     PAGES = {2082--2127},
      ISSN = {0024-6107,1469-7750},
   MRCLASS = {46L05 (46L37 47L40)},
  MRNUMBER = {4368671},
       DOI = {10.1112/jlms.12492},
       URL = {https://doi.org/10.1112/jlms.12492},
}

@article {Pimsner_Popa_1986,
    AUTHOR = {Pimsner, Mihai and Popa, Sorin},
     TITLE = {Entropy and index for subfactors},
   JOURNAL = {Ann. Sci. \'Ecole Norm. Sup. (4)},
  FJOURNAL = {Annales Scientifiques de l'\'Ecole Normale Sup\'erieure.
              Quatri\`eme S\'erie},
    VOLUME = {19},
      YEAR = {1986},
    NUMBER = {1},
     PAGES = {57--106},
      ISSN = {0012-9593},
   MRCLASS = {46L35 (46L55)},
  MRNUMBER = {860811},
MRREVIEWER = {Vaughan\ Jones},
       URL = {http://www.numdam.org/item?id=ASENS_1986_4_19_1_57_0},
}

@inproceedings{viro-topo,
  title={Elementary Topology: Problem Textbook},
  author={Oleg Ya. Viro and O. Ivanov and N. Yu. Netsvetaev and Viatcheslav Kharlamov},
  year={2008},
  url={https://api.semanticscholar.org/CorpusID:118275923}
}

@article {Gupta_kumar_2024,
    AUTHOR = {Gupta, Ved Prakash and Kumar, Sumit},
     TITLE = {On various notions of distance between subalgebras of operator algebras},
   JOURNAL = {M\"unster J. Math.},
  FJOURNAL = {M\"unster Journal of Mathematics},
    VOLUME = {17},
      YEAR = {2024},
    NUMBER = {1},
     PAGES = {241--272},
      ISSN = {1867-5778,1867-5786},
   MRCLASS = {46L05 (46L55 47L40)},
  MRNUMBER = {4976168},
       DOI = {10.17879/51958617466},
       URL = {https://doi.org/10.17879/51958617466},
}

@article {Watatani_ino_2014,
    AUTHOR = {Ino, Shoji and Watatani, Yasuo},
     TITLE = {Perturbations of intermediate {$\rm C^*$}-subalgebras for
              simple {$\rm C^*$}-algebras},
   JOURNAL = {Bull. Lond. Math. Soc.},
  FJOURNAL = {Bulletin of the London Mathematical Society},
    VOLUME = {46},
      YEAR = {2014},
    NUMBER = {3},
     PAGES = {469--480},
      ISSN = {0024-6093,1469-2120},
   MRCLASS = {46L05 (46L07 46L37)},
  MRNUMBER = {3210702},
MRREVIEWER = {Robert\ S.\ Doran},
       DOI = {10.1112/blms/bdu001},
       URL = {https://doi.org/10.1112/blms/bdu001},
}

@article{Kumar_2026,
author = {Kumar, Sumit},
title = {On stability of distance under some tensor products and some calculations},
journal = {Infinite Dimensional Analysis, Quantum Probability and Related Topics},
pages = {2550016},
year = {2026},
doi = {10.1142/S021902572550016X},
URL = {https://doi.org/10.1142/S021902572550016X}
}

@article {bakshi-etal-2019,
    AUTHOR = {Bakshi, Keshab Chandra and Das, Sayan and Liu, Zhengwei and
              Ren, Yunxiang},
     TITLE = {An angle between intermediate subfactors and its rigidity},
   JOURNAL = {Trans. Amer. Math. Soc.},
  FJOURNAL = {Transactions of the American Mathematical Society},
    VOLUME = {371},
      YEAR = {2019},
    NUMBER = {8},
     PAGES = {5973--5991},
      ISSN = {0002-9947,1088-6850},
   MRCLASS = {46L37},
  MRNUMBER = {3937315},
MRREVIEWER = {Liguang\ Wang},
       DOI = {10.1090/tran/7738},
       URL = {https://doi.org/10.1090/tran/7738},
}

\end{document}